\theoremstyle{plain}
\newtheorem{thm}{Theorem}[section]
\newtheorem{prop}[thm]{Proposition}
\newtheorem{lem}[thm]{Lemma}
\newtheorem{cor}[thm]{Corollary}
\theoremstyle{definition}
\newtheorem{defn}[thm]{Definition}
\newtheorem{exam}[thm]{Example}
\newtheorem{rem}[thm]{Remark}
\newtheorem{conjecture}[thm]{Conjecture}
\numberwithin{equation}{section}
\def\Hom{\operatorname{Hom}}   
\def\deg{\operatorname{deg}}   \def\Supp{\operatorname{Supp}}
\def\HF{\operatorname{HF}}     \def\HP{\operatorname{HP}}
\def\ri{\operatorname{ri}}
\newcommand{\bbQ}{\ensuremath{\mathbb Q}}
\newcommand{\bbZ}{\ensuremath{\mathbb Z}}
\newcommand{\bbN}{\ensuremath{\mathbb N}}
\newcommand{\bbP}{\ensuremath{\mathbb P}}
\newcommand{\bbX}{\ensuremath{\mathbb X}}
\newcommand{\bbY}{\ensuremath{\mathbb Y}}
\newcommand{\fm}{\ensuremath{\mathfrak{m}}}
\newcommand{\fC}{\mathfrak{C}}
\newcommand{\fG}{\mathfrak{G}}
\newcommand{\fP}{\mathfrak{P}}
\newcommand{\calO}{{\mathcal{O}}}
\begin{document}

\title[On the Dedekind Different of a Cayley-Bacharach Scheme]
{On the Dedekind Different of a Cayley-Bacharach Scheme}

\author{Martin Kreuzer}
\address[Martin Kreuzer]{Fakult\"{a}t f\"{u}r Informatik und Mathematik \\
Universit\"{a}t Passau, D-94030 Passau, Germany}
\email{martin.kreuzer@uni-passau.de}

\author{Tran N.K. Linh}
\address[Tran N.K. Linh]{
Research Institute for Symbolic Computation (RISC) \\
J. Kepler University, Altenbergerstr. 69, A-4040 Linz, Austria
\newline
\hspace*{.5cm} \textrm{and} Department of Mathematics,
Hue University's College of Education,
34 Le Loi, Hue, Vietnam}
\email{tnkhanhlinh141@gmail.com}

\author{Le Ngoc Long}
\address[Le Ngoc Long]{Fakult\"{a}t f\"{u}r Informatik und Mathematik \\
Universit\"{a}t Passau, D-94030 Passau, Germany \newline
\hspace*{.5cm} \textrm{and} Department of Mathematics,
Hue University's College of Education,
34 Le Loi, Hue, Vietnam}
\email{nglong16633@gmail.com}

\subjclass{Primary 14M05, 13C13, Secondary 13D40, 14N05}

\keywords{Zero-dimensional scheme, Cayley-Bacharach scheme,
almost Gorenstein, Dedekind different, Hilbert function,
Dedekind's formula}

\date{\today}

\dedicatory{}

\commby{Martin Kreuzer, Tran N.K. Linh, and Le Ngoc Long} %

% -----------------------------------------------------------

\begin{abstract}
Given a 0-dimensional scheme $\bbX$ in a projective space
$\bbP^n_K$ over a field $K$, we characterize the
Cayley-Bacharach property of $\bbX$ in terms of the
algebraic structure of the Dedekind different of
its homogeneous coordinate ring. Moreover, we characterize
Cayley-Bacharach schemes by Dedekind's formula
for the conductor and the complementary module, 
we study schemes with minimal Dedekind different using
the trace of the complementary module, and we prove
various results about almost Gorenstein and 
nearly Gorenstein schemes.
\end{abstract}

% -----------------------------------------------------------
\maketitle
% -----------------------------------------------------------

\section{Introduction}

Let $K$ be a field, and let $\bbP^n_K$ be the $n$-dimensional
projective space over $K$. We are interested in studying
0-dimensional subschemes $\bbX$ of $\bbP^n_K$.
Classically, the Cayley-Bacharach property of a reduced
scheme has been defined to mean that all hypersurfaces
of a certain degree which pass through all points of~$\bbX$
but one automatically pass through the last point.
Here we generalize this definition to arbitrary 0-dimensional
subschemes of~$\bbP^n_K$ over an arbitrary field $K$.
In~\cite{GKR}, Geramita \emph{et al.} used
the canonical module $\omega_R$ of the homogeneous coordinate
ring $R$ of~$\bbX$ to characterize the Cayley-Bacharach
property algebraically when $\bbX$ is reduced and
$K$ is algebraically closed.
Later, in \cite{Kr1} and \cite{KL}, this result was generalized
to arbitrary 0-dimensional schemes with $K$-rational support.

In this paper we use the Dedekind different to study the
Cayley-Bacharach property. The Dedekind different
$\delta_\bbX^\sigma$ of~$R$ is the inverse ideal of its
Dedekind complementary module $\fC_\bbX^\sigma$ in its
homogeneous ring of quotients $Q^h(R)$. Here the module
$\fC_\bbX^\sigma$ is a fractional ideal of $Q^h(R)$,
which is defined if $\bbX$ is locally Gorenstein, and
$\sigma$ is a fix homogeneous trace map.
Theorem~\ref{thmSec3.5}, one of our main results,
characterizes Cayley-Bacharach schemes, i.e.,
schemes having the Cayley-Bacharach property of
maximal degree $r_\bbX-1$, in terms of the structure of
their Dedekind different $\delta_\bbX^\sigma$.
Another main result, Theorem~\ref{thmSec4.7},
characterizes Cayley-Bacharach schemes as the ones
for which Dedekind's formula for the conductor and
the Dedekind complementary module holds true.
Applications include several characterizations
of schemes $\bbX$ with minimal Dedekind different and
a characterization of almost Gorenstein scheme $\bbX$
by the nearly Gorenstein and the Cayley-Bacharach
properties.

In the following we describe the contents of the paper
in more detail. Section~2 starts by recalling the notion of
maximal subschemes, minimal separators and the maximal
degree of a minimal separator. We describe the Hilbert function
of a maximal subscheme of~$\bbX$, define standard sets of
separators, and use them to control the ring structure of~$R$
in degrees $\ge r_\bbX$, where $r_\bbX$ is the regularity
index of~$\bbX$.

Next, in Section~3, we rework the construction of the
Dedekind complementary module $\fC_\bbX^\sigma$ from the
local case given in~\cite{HKW}. Then we work out explicit
descriptions of its homogeneous components and its Hilbert
function.
As mentioned above, the Dedekind different $\delta_\bbX^\sigma$
is defined as the inverse ideal of~$\fC_\bbX^\sigma$.
We provide its Hilbert function, Hilbert polynomial, and
a sharp bound for its regularity index.
If the containment
$\bigoplus_{i\ge 2r_\bbX} R_i\subseteq \delta_\bbX^\sigma$
is an equality, we say that $\bbX$ has minimal Dedekind different.
For reduced schemes $\bbX$ in~$\bbP^2_K$, we show that
this condition implies that $\delta_\bbX^\sigma$ agrees
with the K\"{a}hler different of~$\bbX$.

Section 4 starts with the general definition of the
Cayley-Bacharach property of degree $d$ (in short, CBP($d$))
and of Cayley-Bacharach schemes.
The main result of this section is Theorem~\ref{thmSec3.5}.
It shows that a $0$-dimensional locally Gorenstein scheme
$\bbX$ is a Cayley-Bacharach scheme if any only if
the Dedekind different $\delta^\sigma_\bbX$ satisfies
\begin{equation}\tag{$\ast$}
x_0^{r_{\bbX}-1}(I_{\bbY/\bbX})_{r_{\bbX}}
\nsubseteq (\delta^\sigma_\bbX)_{2r_{\bbX}-1}
\end{equation}
for all $p_j\in\Supp(\bbX)$ and every maximal $p_j$-subscheme
$\bbY \subseteq \bbX$.
This theorem allows us to detect Cayley-Bacharach schemes
by looking at a single homogeneous component of the Dedekind
different. Moreover, we can describe the growth of the
Hilbert function of the Dedekind different of
a Cayley-Bacharach scheme and determine its regularity
index (see Proposition~\ref{propSec3.8}).
A property similar to $(\ast)$ allows us to detect
the Cayley-Bacharach property of any degree
(see Proposition~\ref{propSec3.10}), but is not
equivalent to it in general (see Example~\ref{examSec3.11}).

In Section 5 we look at the conductor
$\mathfrak{F}_{\widetilde{R}/R}$ of $R$ in the ring
$\widetilde{R}=\prod_{i=1}^s \calO_{\bbX,p_i}[T_i]$,
where $T_1,\dots,T_s$ are indeterminates.
If $\bbX$ is reduced, this is the classical conductor
of $R$ in its integral closure. After showing a chain
of inclusions
$\mathfrak{F}_{\widetilde{R}/R}^2\subseteq
\delta_\bbX^\sigma \subseteq
\mathfrak{F}_{\widetilde{R}/R}$
between the conductor and the Dedekind different,
we generalize a result of~\cite{GKR} which characterizes
Cayley-Bacharach schemes in terms of their conductors.
More precisely, we prove that the Cayley-Bacharach
property of degree $d$ is equivalent to
$\mathfrak{F}_{\widetilde{R}/R} \subseteq
\bigoplus_{i\ge d+1} R_i$, and that $\bbX$ is a
Cayley-Bacharach scheme if and only if
$\mathfrak{F}_{\widetilde{R}/R} =
\bigoplus_{i\ge r_\bbX} R_i$
(see Theorem~\ref{thmSec4.4}).
A further main result is the generalization
of Dedekind's formula
$\mathfrak{F}_{\widetilde{R}/R}\cdot\fC_\bbX^\sigma
=\widetilde{R}$
for the conductor and the Dedekind complementary module
given in Theorem~5.7. These theorems have a number
of applications to schemes with minimal Dedekind different,
to locally Gorenstein schemes, and to
Cayley-Bacharach schemes (see Proposition~\ref{propSec4.5},
Corollary~\ref{corSec4.09} and Corollary~\ref{corSec4.10}).

In the last section we use the trace of the
Dedekind complementaty module to characterize schemes
with minimal Dedekind different by the Cayley-Bacharach
property and by
$\mathfrak{F}_{\widetilde{R}/R}={\rm tr}(\fC_\bbX^\sigma)$.
Moreover, we provide a number of contributions to the topics
of nearly Gorenstein and almost Gorenstein schemes which
have received some attention lately (see \cite{BF97, GTT, HHS}).
Among others, we prove an analogue of~\cite[Proposition~6.1]{HHS}
in our setting, which characterizes almost Gorenstein schemes
by the nearly Gorenstein property and one value of
the Hilbert function of the Dedekind different.
Further applications to the case
$\Delta_\bbX=\HF_\bbX(r_\bbX)-\HF_\bbX(r_\bbX-1)=1$,
to Cayley-Bacharach schemes, and to level schemes follow.
In particular, we point out that
every almost Gorenstein scheme is nearly Gorenstein.
In the case $\Delta_\bbX=1$,
the converse of this property holds true if $\bbX$ is
a Cayley-Bacharach scheme (see Proposition~\ref{propSec5.5}).
Moreover, we show that $\bbX$ is almost Gorenstein
if and only if it is a Cayley-Bacharach scheme and
$\HF_{\delta^\sigma_\bbX}(r_\bbX+1)= \HF_\bbX(1)$
(see Proposition~\ref{propSec5.8}), and provide
a different proof of a result in~\cite[10.2-4]{GTT}
when the graded ring has dimension one.
In our setting, this result states that a 0-dimensional locally
Gorenstein scheme with $\Delta_\bbX\ge 2$ is level
and almost Gorenstein if and only if $r_\bbX=1$.
Finally, we show that an almost Gorenstein
$(2,r_\bbX-1)$-uniform set $\bbX$ of distinct
$K$-rational points with $r_\bbX\ge 2$ satisfies $\Delta_\bbX=1$.

Unless explicitly mentioned otherwise, we use the definitions
and notation introduced in the books \cite{KR1, KR2, Ku2}.
All examples in this paper were calculated by using
the computer algebra system ApCoCoA (see~\cite{ApC}).

\medskip\bigbreak
\section{Separators of Maximal $p_j$-Subschemes}

Throughout the paper, we work over an arbitrary field $K$.
By $\bbP^n_K$ we denote the projective $n$-space over $K$.
The homogeneous coordinate ring of $\bbP^n_K$ is the polynomial
ring $P=K[X_0,\dots,X_n]$ equipped with the standard grading.
We are interested in studying a 0-dimensional subscheme $\bbX$
of~$\bbP^n_K$. Its homogeneous vanishing ideal in $P$ is denoted
by~$I_\bbX$. The homogeneous coordinate ring of~$\bbX$ is then
given by $R:=P/I_\bbX.$  The ring $R$ is a standard
graded $K$-algebra. Its homogeneous maximal ideal is
denoted by $\fm$.

The set of closed points of~$\bbX$ is called the \textbf{support}
of~$\bbX$ and is denoted by $\Supp(\bbX) = \{p_1,\dots,p_s\}$.
Once and for all, we assume that no point of the support of~$\bbX$
lies on the hyperplane at infinity $\mathcal{Z}(X_0)$.
Consequently, the residue class $x_0$ of $X_0$ in $R$
is a non-zerodivisor and $R$ is a 1-dimensional
Cohen-Macaulay ring.
To each point $p_j \in \Supp(\bbX)$ we have the associated
local ring $\calO_{\bbX,p_j}$. Its maximal ideal is denoted
by $\fm_{\bbX,p_j}$, and the residue field of~$\bbX$ at~$p_j$
is denoted by~$\kappa(p_j)$.
The {\bf degree} of~$\bbX$ is defined as
$\deg(\bbX) = \sum_{j=1}^s \dim_K(\calO_{\bbX,p_j})$.
Furthermore, the {\bf homogeneous ring of quotients} of~$R$,
denoted by $Q^h(R)$, is defined as the localization
of~$R$ with respect to the set of all homogeneous
non-zerodivisors of~$R$.
In view of~\cite[Proposition~3.1]{KL},
there are isomorphisms of graded $R$-modules
$$
Q^h(R) \cong
{\textstyle \prod\limits_{j=1}^s} \calO_{\bbX,p_j}[T_j,T_j^{-1}]
\cong R_{x_0}
$$
where $T_1,\dots,T_s$
are indeterminates with $\deg(T_1) = \cdots = \deg(T_s)=1$.

The following special class of subschemes of the scheme $\bbX$
plays an important role in this paper.

\begin{defn}
Let $j\in\{1,\dots,s\}$. A subscheme $\bbY\subseteq\bbX$ is called
a~\textbf{$p_j$-subscheme}
if the following conditions are satisfied:
\begin{enumerate}
  \item $\calO_{\bbY,p_k} = \calO_{\bbX,p_k}$ for $k \ne j$.
  \item The map
  $\calO_{\bbX,p_j}\twoheadrightarrow\calO_{\bbY,p_j}$
  is an epimorphism.
\end{enumerate}

A $p_j$-subscheme $\bbY\subseteq\bbX$ is called \textbf{maximal}
if $\deg(\bbY) = \deg(\bbX)-\dim_K \kappa(p_j)$.
\end{defn}

If $\bbX$ has $K$-rational support (i.e., all closed points
of $\bbX$ are $K$-rational), then a subscheme $\bbY \subseteq \bbX$
of degree $\deg(\bbY) = \deg(\bbX)-1$ with $\calO_{\bbY,p_j} \ne
\calO_{\bbX,p_j}$ is exactly a maximal $p_j$-subscheme of~$\bbX$.

A relationship between maximal $p_j$-subschemes
of~$\bbX$ and ideals of the product of local rings
can be described as follows (cf.~\cite[Proposition~3.2]{KL}).

\begin{prop}\label{propSec1.2}
Let $\Gamma=\prod_{j=1}^s\calO_{\bbX,p_j}$,
and let $\fG(\calO_{\bbX,p_j}) =
{\rm Ann}_{\calO_{\bbX,p_j}}(\fm_{\bbX,p_j})$
be the socle of $\calO_{\bbX,p_j}$.
There is a 1-1 correspondence
$$
\bigg\{
\begin{array}{c}
\textrm{maximal}\ p_j\textrm{-subschemes}\\
\textrm{of the scheme}\ \bbX
\end{array}
\bigg\}
\longleftrightarrow
\bigg\{
\begin{array}{c}
\textrm{ideals}\ \langle (0,\dots,0,s_j,0,\dots,0) \rangle_{\Gamma}
\subseteq \Gamma\\
\textrm{with}\ s_j\in \fG(\calO_{\bbX,p_j})\setminus\{0\}
\end{array}
\bigg\}.
$$
\end{prop}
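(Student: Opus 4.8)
The plan is to translate the geometric correspondence into pure ideal theory inside the Artinian ring $\Gamma=\prod_{j=1}^s\calO_{\bbX,p_j}$ and then to identify the ideals in question. First I would use the fact that $\Gamma$ is a finite direct product, so that (via the idempotents) every ideal $\mathfrak{a}\subseteq\Gamma$ decomposes uniquely as $\mathfrak{a}=\prod_{j=1}^s\mathfrak{a}_j$ with $\mathfrak{a}_j$ an ideal of $\calO_{\bbX,p_j}$, and closed subschemes $\bbY\subseteq\bbX$ supported in $\Supp(\bbX)$ correspond to such ideals via $\calO_{\bbY,p_j}=\calO_{\bbX,p_j}/\mathfrak{a}_j$. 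Conditions (a) and (b) in the definition of a $p_j$-subscheme then amount exactly to $\mathfrak{a}_k=0$ for all $k\ne j$, so the associated ideal is supported in the $j$-th coordinate, i.e. it is generated by elements of the form $(0,\dots,0,a,0,\dots,0)$ with $a\in\mathfrak{a}_j$.

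Next I would encode the maximality condition numerically. Since only the $j$-th local ring changes, $\deg(\bbX)-\deg(\bbY)=\dim_K\calO_{\bbX,p_j}-\dim_K\calO_{\bbY,p_j}=\dim_K\mathfrak{a}_j$, so $\bbY$ is a maximal $p_j$-subscheme precisely when $\dim_K\mathfrak{a}_j=\dim_K\kappa(p_j)$. Thus the whole proposition reduces to the algebraic claim that, in the local Artinian ring $\calO_{\bbX,p_j}$, the ideals $\mathfrak{a}_j$ with $\dim_K\mathfrak{a}_j=\dim_K\kappa(p_j)$ are precisely the principal ideals $\langle s_j\rangle$ with $s_j\in\fG(\calO_{\bbX,p_j})\setminus\{0\}$; once this is shown, placing $s_j$ in the $j$-th coordinate produces the stated ideal $\langle(0,\dots,0,s_j,0,\dots,0)\rangle_\Gamma$.

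For the easy inclusion, if $s_j\in\fG(\calO_{\bbX,p_j})\setminus\{0\}$ then $\fm_{\bbX,p_j}s_j=0$, so the annihilator of $s_j$ is the maximal ideal and the surjection $a\mapsto as_j$ yields $\langle s_j\rangle\cong\calO_{\bbX,p_j}/\fm_{\bbX,p_j}=\kappa(p_j)$, whence $\dim_K\langle s_j\rangle=\dim_K\kappa(p_j)$. For the converse I would invoke Nakayama's lemma: if $\mathfrak{a}_j\ne 0$, then $\mathfrak{a}_j/\fm_{\bbX,p_j}\mathfrak{a}_j$ is a nonzero $\kappa(p_j)$-vector space, so $\dim_K\mathfrak{a}_j\ge\dim_K(\mathfrak{a}_j/\fm_{\bbX,p_j}\mathfrak{a}_j)\ge\dim_K\kappa(p_j)$, and the hypothesis of equality forces both $\fm_{\bbX,p_j}\mathfrak{a}_j=0$ and $\dim_{\kappa(p_j)}\mathfrak{a}_j=1$. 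Hence $\mathfrak{a}_j$ is a one-dimensional $\kappa(p_j)$-subspace of the socle, so $\mathfrak{a}_j=\langle s_j\rangle$ for any nonzero $s_j\in\mathfrak{a}_j$, and this $s_j$ lies in $\fG(\calO_{\bbX,p_j})$.

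The hard part will be bookkeeping rather than ideas: because the points of $\bbX$ need not be $K$-rational, I must carefully keep $K$-dimension and $\kappa(p_j)$-dimension apart throughout the count, using that a nonzero socle element generates an ideal isomorphic to $\kappa(p_j)$ (of $K$-dimension $\dim_K\kappa(p_j)$) rather than to $K$. I would also record that the correspondence is well defined on ideals, not on generators: two socle elements generate the same ideal if and only if they differ by a unit, so distinct maximal $p_j$-subschemes yield distinct ideals $\langle s_j\rangle$ and conversely, giving the asserted bijection. This parallels the $K$-rational argument of \cite[Proposition~3.2]{KL}, now carried out over an arbitrary field.
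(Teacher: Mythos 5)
Your proposal is correct, but there is nothing in the paper to compare it against: the paper states Proposition~\ref{propSec1.2} without proof, citing \cite[Proposition~3.2]{KL}. Your argument is a complete, self-contained substitute for that citation, and all the steps check out: ideals of the finite product $\Gamma$ split coordinatewise via the idempotents, so $p_j$-subschemes correspond to ideals concentrated in the $j$-th factor; the degree count identifies maximality with $\dim_K\mathfrak{a}_j=\dim_K\kappa(p_j)$; and the two-sided dimension comparison (the map $a\mapsto as_j$ inducing $\kappa(p_j)\cong\langle s_j\rangle$ in one direction, Nakayama giving $\dim_K\mathfrak{a}_j\ge\dim_K(\mathfrak{a}_j/\fm_{\bbX,p_j}\mathfrak{a}_j)\ge\dim_K\kappa(p_j)$ in the other) pins down exactly the rank-one $\kappa(p_j)$-subspaces of the socle, i.e.\ the principal ideals generated by nonzero socle elements. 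A pleasant byproduct of your Nakayama inequality, worth recording, is that it justifies the terminology: every proper $p_j$-subscheme has degree at most $\deg(\bbX)-\dim_K\kappa(p_j)$, so the ``maximal'' ones are precisely those of largest possible degree. Your care in distinguishing $K$-dimension from $\kappa(p_j)$-dimension is exactly what is needed for non-rational points, and the observation that the correspondence is with ideals (socle elements being determined only up to units) correctly disposes of well-definedness.
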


Let $\bbY$ be a maximal $p_j$-subscheme of $\bbX$, let
$I_{\bbY/\bbX}$ be the ideal of $\bbY$ in $R$, and let
$\alpha_{\bbY/\bbX} :=
\min\{i\in \bbN \mid (I_{\bbY/\bbX})_i \ne \langle 0\rangle\}$.
Furthermore, we let $s_j\in \fG(\calO_{\bbX,p_j})\setminus\{0\}$
be a socle element corresponding to $\bbY$.
Then there is a non-zero homogeneous element
$f_\bbY \in (I_{\bbY/\bbX})_i$, $i \ge \alpha_{\bbY/\bbX}$,
such that
$\tilde{\imath}(f_\bbY) = (0,\dots,0, s_jT_j^i,0,\dots,0)$.
Here the injection
$$
\tilde{\imath}: R \longrightarrow Q^h(R) \cong
{\textstyle\prod\limits_{j=1}^s}\calO_{\bbX,p_j}[T_j,T_j^{-1}]
$$
is the homogeneous map of degree zero given by
$\tilde{\imath}(f) = (f_{p_1}T_1^i,\dots, f_{p_s}T_s^i)$,
for $f \in R_i$ with $i\ge 0$, where $f_{p_j}$ is the germ
of $f$ at the point $p_j$ of $\Supp(\bbX)$.

Let $\varkappa_j := \dim_K \kappa(p_j)$, and let
$\{e_{j1}, \dots, e_{j\varkappa_j}\}\subseteq \calO_{\bbX,p_j}$
be elements whose residue classes form a $K$-basis of $\kappa(p_j)$.
For $a \in \calO_{\bbX,p_j}$ and for $k_j = 1, \dots, \varkappa_j$,
we set
$$
\mu(a) := \min\{i\in\bbN \,\mid\,
(0,\dots,0,aT_j^i,0,\dots,0) \in \tilde{\imath}(R)\}
$$
and
$$
f^*_{jk_j} := \tilde{\imath}^{-1}((0,\dots,0,
e_{jk_j}s_{j}T_j^{\mu(e_{jk_j}s_{j})},0,\dots,0)).
$$

\begin{defn}
Let $\bbY$ be a maximal $p_j$-subscheme as above.
\begin{enumerate}
 \item The set $\{f^*_{j1},\dots,f^*_{j\varkappa_j}\}$ is called
  the \textbf{set of minimal separators of $\bbY$ in $\bbX$}
  with respect to $s_j$ and $\{e_{j1},\dots,e_{j\varkappa_j}\}$.

 \item The number
   $$
   \mu_{\bbY/\bbX}:=\max\{\, \deg(f^*_{jk_j})
   \mid k_j = 1, \dots, \varkappa_j \,\}
   $$
   is called the
   \textbf{maximal degree of a minimal separator of~$\bbY$ in~$\bbX$}.
\end{enumerate}
\end{defn}

\begin{rem} \label{remSec1.4}
Let $\bbY$ be a maximal $p_j$-subscheme of~$\bbX$.
\begin{enumerate}
  \item[(a)]
  The maximal degree of a minimal separator of~$\bbY$ in~$\bbX$
  depends neither on the choice of the socle element~$s_j$
  nor on the specific choice of~$\{e_{j1},\dots,e_{j\varkappa_j}\}$
  (see \cite[Lemma~3.4]{KL}).

  \item[(b)]
  Set $U := \langle(0,\dots,0,s_j,0,\dots,0)\rangle_{Q^h(R)}$.
  As in the proof of~\cite[Proposition~4.2]{KL},
  we have $I_{\bbY/\bbX} = \tilde{\imath}^{-1}(U)$
  and $\dim_K(I_{\bbY/\bbX})_i = \dim_K U_i = \varkappa_j$
  for $i\gg0$. In particular,
  $f^*_{j1},\dots,f^*_{j\varkappa_j} \in I_{\bbY/\bbX}$.

  \item[(c)]
  If $\bbX$ has $K$-rational support, then
  $\varkappa_1=\cdots=\varkappa_s=1$ and a minimal
  separator $f^*_{\bbY}$ of~$\bbY$ in~$\bbX$ is nothing but
  a non-zero element of~$(I_{\bbY/\bbX})_{\alpha_{\bbY/\bbX}}$,
  i.e., $f^*_{\bbY}$ is a minimal separator of~$\bbY$ in~$\bbX$
  in the sense of~\cite{Kr1}.
\end{enumerate}
\end{rem}

Now we examine the Hilbert function of a maximal $p_j$-subscheme
of~$\bbX$. Recall that the {\bf Hilbert function} of a finitely
generated graded $R$-module $M$ is a map $\HF_M: \bbZ \rightarrow \bbN$
given by $\HF_M(i)=\dim_K(M_i)$.
The unique polynomial $\HP_M(z) \in \mathbb{Q}[z]$ for which
$\HF_M(i)=\HP_M(i)$ for all $i\gg 0$ is called the
\textbf{Hilbert polynomial} of $M$.
The number
$$
\ri(M)=\min\big\{\, i\in\bbZ \mid \HF_{M}(j)=\HP_M(j)\
\textrm{for all} \ j\geq i \,\big\}
$$
is called the \textbf{regularity index} of $M$ (or of $\HF_M$).
Whenever $\HF_M(i)=\HP_M(i)$ for all $i\in\bbZ$, we let $\ri(M)=-\infty.$
Instead of $\HF_R$ we also write $\HF_{\bbX}$ and call it the
Hilbert function of~$\bbX$. Its regularity index is denoted by~$r_{\bbX}.$
Note that $\HF_{\bbX}(i)=0$ for $i<0$ and
$$
1=\HF_{\bbX}(0)<\HF_{\bbX}(1)<\cdots<\HF_{\bbX}(r_{\bbX}-1)< \deg(\bbX)
$$
and $\HF_{\bbX}(i)=\deg(\bbX)$ for $i\geq r_{\bbX}$.

\begin{prop}\label{propSec1.5}
Let $\bbY\subseteq\bbX$ be a maximal $p_j$-subscheme, let $s_j$ be
a socle element of $\calO_{\bbX,p_j}$ corresponding to $\bbY$, let
$\{e_{j1},\dots,e_{j\varkappa_j}\}\subseteq\calO_{\bbX,p_j}$
be elements whose residue classes form a $K$-basis of~$\kappa(p_j)$,
and let $\{f^*_{j1},\dots,f^*_{j\varkappa_j}\}$
be the set of minimal separators of~$\bbY$ in~$\bbX$ with respect
to~$s_j$ and $\{e_{j1},\dots,e_{j\varkappa_j}\}$.
Then the following assertions hold true.

\begin{enumerate}
 \item We have
  $I_{\bbY/\bbX}=\langle f \rangle_R^{\mathrm{sat}}$
  for every $f\in (I_{\bbY/\bbX})_{i}\setminus\{0\}$
  with $i \ge \alpha_{\bbY/\bbX}$,
  where $\langle f \rangle_R^{\mathrm{sat}}
  = \{\, g\in R \mid \mathfrak{m}^ig \subseteq \langle f \rangle_R
  \mbox{ for some $i\ge 0$} \,\}$
  is the saturation of $\langle f \rangle_R$.

 \item We have $\alpha_{\bbY/\bbX}\le\mu_{\bbY/\bbX} \le r_{\bbX}$
 and the Hilbert function of~$\bbY$ satisfies
  $$
  \HF_{\bbY}(i)=
  \begin{cases}
  \HF_{\bbX}(i) & \textrm{if}\quad i < \alpha_{\bbY/\bbX},\\
    \le\HF_{\bbX}(i)-1 &  \textrm{if}\quad
    \alpha_{\bbY/\bbX} \le i < \mu_{\bbY/\bbX},\\
    \HF_{\bbX}(i)-\varkappa_j  &
    \textrm{if}\quad i \ge \mu_{\bbY/\bbX}.
  \end{cases}
  $$

 \item There is a special choice of a set
  $\{\, e_{j1}, \dots, e_{j\varkappa_j} \,\}\subseteq\calO_{\bbX,p_j}$
  such that its residue classes form a $K$-basis of $\kappa(p_j)$,
  $I_{\bbY/\bbX}\!=\!\langle f^*_{j1},\dots,f^*_{j\varkappa_j}\rangle_R$,
  and for all $i\in\bbZ$ we have
  $$
  \Delta\HF_{\bbY}(i) =
  \Delta\HF_{\bbX}(i) - \#\big\{\, k \in \{1,\dots,\varkappa_j\}
  \mid \deg(f^*_{jk})=i \,\big\}.
  $$
\end{enumerate}
\end{prop}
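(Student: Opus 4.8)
The plan is to analyze the structure of $I_{\bbY/\bbX}$ as a fractional ideal via the injection $\tilde{\imath}$, and to extract the Hilbert function data from the explicit generators furnished by the minimal separators. Throughout I would work with the submodule $U = \langle(0,\dots,0,s_j,0,\dots,0)\rangle_{Q^h(R)}$ introduced in Remark~\ref{remSec1.4}(b), for which we already know $I_{\bbY/\bbX} = \tilde{\imath}^{-1}(U)$ and $\dim_K(I_{\bbY/\bbX})_i = \varkappa_j$ for $i \gg 0$.

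For part (a), the key observation is that $I_{\bbY/\bbX}$ is a saturated ideal (it is the vanishing ideal of the subscheme $\bbY$), so it suffices to show that any nonzero homogeneous $f \in (I_{\bbY/\bbX})_i$ with $i \ge \alpha_{\bbY/\bbX}$ generates $I_{\bbY/\bbX}$ up to saturation. Since $\bbY$ is a \emph{maximal} $p_j$-subscheme, the socle element $s_j$ is determined up to scalar, and I would show $\tilde{\imath}(f)$ is a unit multiple of $(0,\dots,0,s_jT_j^i,0,\dots,0)$ in the relevant component; this forces $\langle f\rangle^{\mathrm{sat}}_R = \tilde{\imath}^{-1}(U) = I_{\bbY/\bbX}$ because the saturation in $Q^h(R)$ of the principal ideal generated by $s_jT_j^i$ recovers all of $U$. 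For part (b), the bounds $\alpha_{\bbY/\bbX}\le\mu_{\bbY/\bbX}\le r_\bbX$ come from the definition of $\mu(a)$ together with the fact that $\tilde{\imath}(R)_i$ contains the full product in degrees $i \ge r_\bbX$ (since $\HF_\bbX(i) = \deg(\bbX)$ there), so every element $e_{jk_j}s_j T_j^i$ already lies in $\tilde{\imath}(R)$ once $i \ge r_\bbX$. The three-case Hilbert function formula then follows by comparing $\dim_K(I_{\bbY/\bbX})_i$ against its stable value $\varkappa_j$: below $\alpha_{\bbY/\bbX}$ the ideal vanishes, at and above $\mu_{\bbY/\bbX}$ the separators have all appeared so the dimension is exactly $\varkappa_j$, and in the intermediate range the dimension is at least one but possibly less than $\varkappa_j$, giving the $\le \HF_\bbX(i)-1$ estimate.

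For part (c), the main work is to make a clever choice of $K$-basis $\{e_{j1},\dots,e_{j\varkappa_j}\}$ of $\kappa(p_j)$ so that the degrees $\mu(e_{jk_j}s_j)$ are as spread out as possible, realizing the separators as a minimal generating set. The idea is to order the possible separator degrees and pick representatives greedily: starting from the smallest degree in which $(I_{\bbY/\bbX})_i \ne \langle 0\rangle$, I would choose basis elements whose images appear in exactly the degrees where $\dim_K(I_{\bbY/\bbX})_i$ jumps. With such a basis, each increment $\Delta\HF_\bbY(i) - \Delta\HF_\bbX(i)$ records precisely the negative of the number of separators appearing in degree $i$, which is the asserted formula. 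Establishing that $I_{\bbY/\bbX} = \langle f^*_{j1},\dots,f^*_{j\varkappa_j}\rangle_R$ (genuine equality, not just up to saturation) requires checking that these elements already generate in every degree, which uses the multiplicative structure of $R$ in degrees $\ge r_\bbX$ as controlled in Section~2.

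The hard part will be part (c): producing the special basis and proving that the resulting separators form an honest minimal generating set of $I_{\bbY/\bbX}$ rather than merely generating it up to saturation. This is delicate because the germs $e_{jk_j}s_j$ interact with the ring structure of $\calO_{\bbX,p_j}$, and one must verify that multiplying a lower-degree separator by $x_0$ does not accidentally produce a higher separator (which would break minimality and distort the degree count). I expect this to hinge on the independence of the socle filtration and on a careful dimension count showing that the number of generators needed in each degree equals the prescribed jump in $\Delta\HF_\bbY$.
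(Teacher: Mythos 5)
Your overall plan coincides with the paper's proof: both work through the injection $\tilde{\imath}$ and the module $U$ of Remark~\ref{remSec1.4}(b), both obtain the stable value $\varkappa_j$ of $\HF_{I_{\bbY/\bbX}}$ from the $x_0$-shifted separators, and both prove part~(c) by the same greedy, degree-by-degree basis construction followed by a dimension count. The one point where you genuinely diverge works in your favor: you get $\mu_{\bbY/\bbX}\le r_{\bbX}$ directly from the fact that $\tilde{\imath}(R_i)=Q^h(R)_i$ for $i\ge r_{\bbX}$, so that every element $(0,\dots,0,e_{jk_j}s_jT_j^{r_{\bbX}},0,\dots,0)$ already lies in $\tilde{\imath}(R)$; the paper instead argues by contradiction, showing $\HF_{\bbY}(r_{\bbX})=\deg(\bbY)$ because otherwise $\HF_{I_{\bbY/\bbX}}(r_{\bbX})>\varkappa_j$. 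Your argument is valid (the paper itself uses this surjectivity inside part~(a)) and is, if anything, more direct. In part~(a) your sketch is thinner than it should be: asserting that ``the saturation of the ideal generated by $s_jT_j^i$ recovers all of $U$'' is precisely what has to be proved. The actual work is to take $g\in(I_{\bbY/\bbX})_k$ with $\tilde{\imath}(g)=(0,\dots,0,bs_jT_j^k,0,\dots,0)$, dispose of the case $b\in\fm_{\bbX,p_j}$ (then $bs_j=0$, so $g=0$), lift $ba^{-1}T_j^{r_{\bbX}}$ to an element $h\in R_{r_{\bbX}}$ using surjectivity in degree $r_{\bbX}$, check $x_0^{r_{\bbX}+i}g=x_0^khf$, and conclude via \cite[Lemma~1.6]{Kr2} that $x_0$-saturation suffices; but this is a matter of detail rather than of approach.

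The one place where your plan, taken literally, would fail is the goal you set yourself in part~(c): the proposition does \emph{not} claim that $\{f^*_{j1},\dots,f^*_{j\varkappa_j}\}$ is a minimal generating set of $I_{\bbY/\bbX}$, and in general it is not --- in the example immediately following the proposition one has $f^*_2=x_1f^*_1$, so that $I_{\bbY/\bbX}=\langle f^*_1\rangle_R$ while the difference formula still holds. So ``proving that the resulting separators form an honest minimal generating set'' is not only unnecessary but impossible. What the construction must ensure is weaker: the new elements picked in degree $i$ are chosen complementary to $x_0\cdot(I_{\bbY/\bbX})_{i-1}$ inside $(I_{\bbY/\bbX})_i$, which forces $\#\{k \mid \deg(f^*_{jk})=i\}=\HF_{I_{\bbY/\bbX}}(i)-\HF_{I_{\bbY/\bbX}}(i-1)$ by construction; multiples by variables other than $x_0$ may well reproduce later separators without harming either generation or the formula. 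Note also the logical order: you cannot first ``choose a $K$-basis of $\kappa(p_j)$ so that the degrees $\mu(e_{jk_j}s_j)$ are spread out'' --- the existence of such a basis is exactly what is at stake. One chooses the ideal elements $f^*_{jk_j}$ degree by degree, reads off $e_{jk_j}$ from $\tilde{\imath}(f^*_{jk_j})=(0,\dots,0,e_{jk_j}s_jT_j^{\deg(f^*_{jk_j})},0,\dots,0)$, and only then verifies (as the paper does in its final paragraph, using $\fm_{\bbX,p_j}s_j=0$ and the linear independence of the $e_{jk_j}s_j$) that the residue classes $\overline{e}_{jk_j}$ form a $K$-basis of $\kappa(p_j)$.
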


\begin{proof}
(a)\ It is clear that
$\langle f \rangle_R\subseteq \langle f \rangle_R^{\mathrm{sat}}
\subseteq I_{\bbY/\bbX}$.
For the other inclusion, we use Remark~\ref{remSec1.4}(b)
and write
$$
\tilde{\imath}(f)=(0,\dots,0,as_{j}T_j^{i},0,\dots,0)
\in Q^h(R)
$$
for some $a \in \calO_{\bbY,p_j}\setminus\fm_{\bbX,p_j}$.
Similarly, for every $g\in (I_{\bbY/\bbX})_{k}$ with
$k\geq \alpha_{\bbY/\bbX}$
we have $\tilde{\imath}(g)=(0,\dots,0,bs_{j}T_j^k,0,\dots,0)$
with $b \in \calO_{\bbX,p_j}$.
If $b$ is not a unit of $\calO_{\bbX,p_j}$, then $bs_{j} = 0$,
and so $g = 0\in \langle f \rangle_R^{\mathrm{sat}}$.
Otherwise, since $R_{i+r_\bbX} \cong Q^h(R)_{i+r_\bbX}$
for all $i \ge 0$, we let
$$
h = \tilde{\imath}^{-1}
((0,\dots,0,ba^{-1}T_j^{r_{\bbX}},0,\dots,0))\in R_{r_{\bbX}}.
$$
Then we have
$x_0^{r_{\bbX}+i}\!g = x_0^khf\in\! \langle f \rangle_R$, and
consequently $g \!\in\! \langle f \rangle_R^{\mathrm{sat}}$
by \cite[Lemma~1.6]{Kr2}. Hence we obtain
$I_{\bbY/\bbX} = \langle f \rangle_R^{\mathrm{sat}}$.

(b)\  Obviously, we have
$\alpha_{\bbY/\bbX}\le \mu_{\bbY/\bbX}$
and $\HF_{\bbY}(i) \le \HF_{\bbX}(i)-1$ for
$\alpha_{\bbY/\bbX} \le i <\mu_{\bbY/\bbX}$.
Now we verify the equality
$\HF_{\bbY}(i+\mu_{\bbY/\bbX})=
\HF_{\bbX}(i+\mu_{\bbY/\bbX})-\varkappa_j$
for all $i \ge 0$. We set
$g_{jk_j}:=x_0^{\mu_{\bbY/\bbX}-\deg(f^*_{jk_j})}f^*_{jk_j}
\in (I_{\bbY/\bbX})_{\mu_{\bbY/\bbX}}$
for all $k_j = 1, \dots, \varkappa_j$.
Then we have
$\tilde{\imath}(g_{jk_j})
=(0,\dots,0,e_{jk_j}s_jT_j^{\mu_{\bbY/\bbX}},0,\dots,0)$.
Since $\{e_{j1}s_{j},\dots,e_{j\varkappa_j}s_{j}\}$ is
$K$-linearly independent, this implies
$$
\varkappa_j
= \dim_K \langle g_{j1},\dots,g_{j\varkappa_j}\rangle_K
\le \dim_K(I_{\bbY/\bbX})_{\mu_{\bbY/\bbX}}
\le \varkappa_j.
$$
So, we get $\dim_K(I_{\bbY/\bbX})_{\mu_{\bbY/\bbX}}
=\dim_K(I_{\bbY/\bbX})_{i+\mu_{\bbY/\bbX}}
=\varkappa_j$ for all $i \ge 0$.
It follows that $\HF_{\bbY}(i+\mu_{\bbY/\bbX})
=\HF_{\bbX}(i+\mu_{\bbY/\bbX})-\varkappa_j$
for all $i \ge 0$. In particular, $\mu_{\bbY/\bbX}$
is the smallest number $i\in\bbN$ such that
$\HF_{\bbY}(i)=\HF_{\bbX}(i)-\varkappa_j$.

Moreover, we see that $\HF_\bbY(r_\bbX) = \deg(\bbY)$,
since otherwise we would have
$$
\HF_{I_{\bbY/\bbX}}(r_\bbX)
= \deg(\bbX) - \HF_\bbY(r_\bbX) >
\deg(\bbX)-\deg(\bbY)=\varkappa_j,
$$
which is impossible.
Thus $\HF_{\bbY}(r_{\bbX}) = \deg(\bbX)-\varkappa_j
= \HF_\bbX(r_\bbX)-\varkappa_j$, and hence the inequality
$\mu_{\bbY/\bbX} \le r_{\bbX}$ holds true.

(c)\ We may construct the set
$\{e_{j1},\dots,e_{j\varkappa_j}\}\subseteq\calO_{\bbX,p_j}$
with the desired properties as follows. Let
$d_{\alpha_{\bbY/\bbX}}=\HF_{I_{\bbY/\bbX}}(\alpha_{\bbY/\bbX})$
and
$$
d_{\alpha_{\bbY/\bbX}+i}
=\HF_{I_{\bbY/\bbX}}(\alpha_{\bbY/\bbX}+i)-
\HF_{I_{\bbY/\bbX}}(\alpha_{\bbY/\bbX}+i-1)
$$
for $i=1,\dots,\mu_{\bbY/\bbX}-\alpha_{\bbY/\bbX}$.
Then we have
$\varkappa_j=d_{\alpha_{\bbY/\bbX}}
+d_{\alpha_{\bbY/\bbX}+1}+\cdots+d_{\mu_{\bbY/\bbX}}$.
We begin taking a $K$-basis
$f^*_{j1},\dots,f^*_{jd_{\alpha_{\bbY/\bbX}}}$
of $(I_{\bbY/\bbX})_{\alpha_{\bbY/\bbX}}$.
For $i=1,\dots,\mu_{\bbY/\bbX}-\alpha_{\bbY/\bbX}$, if
$d_{\alpha_{\bbY/\bbX}+i}>0$, we choose
$f^*_{j{\scriptscriptstyle\sum\limits_{0\leq k<i}
d_{\alpha_{\bbY/\bbX}+k}}+1},\dots,
f^*_{j{\scriptscriptstyle\sum\limits_{ 0\leq k\leq i}
d_{\alpha_{\bbY/\bbX}+k}}}$
such that the set
$$
\begin{aligned}
\Big\{x_0^{i}f^*_{j1},\dots,x_0^{i}f^*_{jd_{\alpha_{\bbY/\bbX}}},
\dots,
& x_0f^*_{j{\scriptscriptstyle\sum\limits_{0\le k<i-1}
  d_{\alpha_{\bbY/\bbX}+k}}+1},\dots,
  x_0f^*_{j{\scriptscriptstyle\sum\limits_{0\le k\le i-1}
  d_{\alpha_{\bbY/\bbX}+k}}},\\
& \hspace*{1.55cm} f^*_{j{\scriptscriptstyle\sum\limits_{0\le k<i}
  d_{\alpha_{\bbY/\bbX}+k}}+1},\dots,
  f^*_{j{\scriptscriptstyle\sum\limits_{0\leq k\le i}
  d_{\alpha_{\bbY/\bbX}+k}}}\Big\}
\end{aligned}
$$
forms a $K$-basis of $(I_{\bbY/\bbX})_{\alpha_{\bbY/\bbX}+i}$.
Then the ideal
$J=\langle f^*_{j1}, \dots, f^*_{j\varkappa_j}\rangle_R$
is a subideal of~$I_{\bbY/\bbX}$ and
$\HF_{J}(i) = \HF_{I_{\bbY/\bbX}}(i)$ for all
$i\le \mu_{\bbY/\bbX}$.
By~(b) we have $\HF_{J}(i)=\HF_{I_{\bbY/\bbX}}(i)=\varkappa_j$
for $i \ge \mu_{\bbY/\bbX}$. This implies $I_{\bbY/\bbX} = J
= \langle f^*_{j1}, \dots, f^*_{j\varkappa_j} \rangle_R$.
Moreover, it follows from the construction of the set
$\{ f^*_{j1}, \dots, f^*_{j\varkappa_j} \}$ that
$$
\HF_{I_{\bbY/\bbX}}(i)=\#\big\{\,
k \in\{1,\dots,\varkappa_j\}
\mid \deg(f^*_{jk}) \le i \,\big\}
$$
for all $i \in \bbZ$.
Thus we have
$$
\begin{aligned}
\Delta\HF_{\bbY}(i)& = \HF_{\bbY}(i) - \HF_{\bbY}(i-1)\\
&=(\HF_{\bbX}(i) - \HF_{I_{\bbY/\bbX}}(i)) -
(\HF_{\bbX}(i-1) - \HF_{I_{\bbY/\bbX}}(i-1))\\
&=\Delta\HF_{\bbX}(i) - (\HF_{I_{\bbY/\bbX}}(i)
- \HF_{I_{\bbY/\bbX}}(i-1))\\
&=\Delta\HF_{\bbX}(i)-
\#\big\{\, k \in\{1,\dots,\varkappa_j\}
\mid \deg(f^*_{jk}) = i\,\big\}.
\end{aligned}
$$

Now let us write $\tilde{\imath}(f^*_{jk_j})
=(0,\dots,0,e_{jk_j}s_jT_j^{\deg(f^*_{jk_j})},0,\dots,0)$
for $k_j = 1, \dots, \varkappa_j$. Obviously, the set
$\{ e_{j1}s_j, \dots, e_{j\varkappa_j}s_j \}$
is $K$-linearly independent.
It remains to show that the residue classes
$\{ \overline{e}_{j1}, \dots, \overline{e}_{j\varkappa_j} \}$
form a $K$-basis of~$\kappa(p_j)$.
Suppose there are $c_{j1}, \dots, c_{j\varkappa_j} \in K$
such that
$c_{j1}\overline{e}_{j1} + \cdots +
c_{j\varkappa_j}\overline{e}_{j\varkappa_j} = 0$.
It follows that the element
$c_{j1}e_{j1} + \cdots + c_{j\varkappa_j}e_{j\varkappa_j}$
is contained in $\fm_{\bbX,p_j}$.
This implies  $c_{j1}e_{j1}s_j + \cdots +
c_{j\varkappa_j}e_{j\varkappa_j}s_j = 0$.
Since $\{ e_{j1}s_j, \dots, e_{j\varkappa_j}s_j \}$
is $K$-linearly independent, we deduce
$c_{j1} = \cdots = c_{j\varkappa_j}= 0$.
Therefore the set
$\{ \overline{e}_{j1}, \dots, \overline{e}_{j\varkappa_j} \}$
is a $K$-basis of~$\kappa(p_j)$, and the conclusion follows.
\end{proof}

The set of minimal separators
$\{f^*_{j1},\dots,f^*_{j\varkappa_j}\}$
of a maximal $p_j$-subscheme $\bbY$ in~$\bbX$ as
in Proposition~\ref{propSec1.5}(c)
is not necessarily a homogeneous minimal system of
generators of~$I_{\bbY/\bbX}$, as the following example shows.

\begin{exam}
Let $\bbX \subseteq \bbP^2_{\bbQ}$ be the $0$-dimensional
reduced complete intersection
with $I_{\bbX} = \langle X_{2},
X_{0}^{5}X_{1} -\frac{11}{6}X_{0}^{4}X_{1}^{2}
+ 2X_{0}^{3}X_{1}^{3} - 2X_{0}^{2}X_{1}^{4}
+ X_{0}X_{1}^{5} -\frac{1}{6}X_{1}^{6}\rangle$.
Then~$\bbX$ contains the set of $\bbQ$-rational points
$\bbY = \{ (1:0:0), (1:1:0), (1:2:0), (1:3:0) \}$
which is a maximal $p$-subscheme, where $p$ is the closed
point corresponding to the homogeneous prime ideal
$\fP = \langle X_1^2+X_0^2, X_2 \rangle$.
We see that $\deg(\bbY) = 4 = \deg(\bbX) - 2$, and
two minimal separators of~$\bbY$ in~$\bbX$
are $f^*_{1} = x_{0}^{3}x_{1} -\frac{11}{6}x_{0}^{2}x_{1}^{2}
+ x_{0}x_{1}^{3} -\frac{1}{6}x_{1}^{4}$
and $f^*_{2} = x_1f^*_{1}$.
Moreover, the equality of the first difference function
of~$\HF_\bbY$ in Proposition~\ref{propSec1.5}(c) holds true,
while $I_{\bbY/\bbX} = \langle f^*_1 \rangle_R$.
\end{exam}

When $\bbY\subseteq\bbX$ is a $p_j$-subscheme of degree
$\deg(\bbY)=\deg(\bbX)-1$, we have
$\alpha_{\bbY/\bbX}=\mu_{\bbY/\bbX}$
and the Hilbert function of~$\bbY$ is given by
$$
\HF_{\bbY}(i)=
\begin{cases}
\HF_{\bbX}(i)   &\ \textrm{for}\ i< \alpha_{\bbY/\bbX},\\
\HF_{\bbX}(i)-1 &\ \textrm{for} \ i\ge \alpha_{\bbY/\bbX}
\end{cases}
$$
(see also \cite[Lemma~1.7]{Kr2}).
Furthermore, if $\bbX= \{p_1,\dots,p_s\}$ is a set of
distinct $K$-rational points in~$\bbP^n_K$, we write
$p_j=(1: p_{j1}:...:p_{jn})$ with $p_{jk}\in K$,
and for $f\in R$ we set $f(p_j):=F(1, p_{j1},\dots,p_{jn})$
where $F$ is any representative of~$f$ in~$P$.
Then a separator of $\bbX\setminus\{p_j\}$ in~$\bbX$
is an element $f\in R_{r_{\bbX}}$ such that $f(p_j) \ne 0$ and
$f(p_k) = 0$ for $k \ne j$. In general setting we introduce
the following definition.

\begin{defn}
In the setting of Proposition~\ref{propSec1.5},
we let $f_{jk_j}=x_0^{r_{\bbX}-\mu(e_{jk_j}s_{j})}f^*_{jk_j}$
for $k_j = 1, \dots, \varkappa_j$.
The set $\{f_{j1},\dots,f_{j\varkappa_j}\}$  is called the
{\bf standard set of separators of $\bbY$ in~$\bbX$}
with respect to $s_j$ and $\{e_{j1},\dots,e_{j\varkappa_j}\}$.
\end{defn}

Some basic properties of standard sets of separators of a maximal
$p_j$-subscheme are summarized in the following lemma
which generalizes some results in~\cite[Lemmas~1.9 and 1.10]{Kr2}.

\begin{lem}\label{lemSec1.8}
Let $\bbX \subseteq \bbP^n_K$ be a $0$-dimensional scheme,
let $f \in R_i$ with $i \ge 0$, let $\bbY$ be a maximal
$p_j$-subscheme of~$\bbX$, and let
$\{ f_{j1},\dots,f_{j\varkappa_j}\}\subseteq R_{r_{\bbX}}$
be a standard set of separators of~$\bbY$ in~$\bbX$.
\begin{enumerate}
  \item We have $f \cdot f_{jl} =
   \sum_{k_j=1}^{\varkappa_j}c_{jk_jl}x_0^if_{jk_j}$
   for some $c_{j1l},\dots,c_{j\varkappa_jl} \in K$ and
   $l \in \{ 1, \dots, \varkappa_j \}$.

  \item If $f \cdot f_{jl} = 0$ for some
   $l \in \{1,\dots,\varkappa_j\}$, then
   $f \cdot f_{j\lambda} = 0$ for all
   $\lambda \in \{ 1, \dots, \varkappa_j \}$.
   Moreover, $f \cdot f_{jl} \ne 0$  if and only if
   $f_{p_j} \notin \fm_{\bbX,p_j}$.

  \item Let $\bbY'$ be a maximal $p_{j'}$-subscheme of~$\bbX$,
   and let $\{ f_{j'1}, \dots, f_{j'\varkappa_{j'}} \}
   \subseteq R_{r_{\bbX}}$ be a standard set of
   separators of~$\bbY'$ in~$\bbX$. Then we have
   $$
   \quad f_{jk_j} \cdot f_{j'k_{j'}}\in
   \begin{cases}
   x_0^{r_{\bbX}}\langle f_{j1},\dots,f_{j\varkappa_j}\rangle_K
   &\ \textrm{if}\  j = j' \ \textrm{and} \
   \dim_{\kappa(p_j)}(\calO_{\bbX,p_j}) = 1,\\
   \langle 0 \rangle   &\ \textrm{otherwise}.
   \end{cases}
   $$
\end{enumerate}
\end{lem}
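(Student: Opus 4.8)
The plan is to transport every product into the product ring $\prod_{j=1}^s \calO_{\bbX,p_j}[T_j,T_j^{-1}]\cong Q^h(R)$ along the injective homogeneous map $\tilde{\imath}$, compute there (where multiplication is componentwise), and pull the resulting equalities back to $R$ using injectivity. Everything rests on one image formula. Since the germ of $x_0$ at each $p_j$ is $1$, we have $\tilde{\imath}(x_0^m)=(T_1^m,\dots,T_s^m)$, and combining this with the defining identity $\tilde{\imath}(f^*_{jk_j})=(0,\dots,0,e_{jk_j}s_jT_j^{\mu(e_{jk_j}s_j)},0,\dots,0)$ and $f_{jk_j}=x_0^{r_{\bbX}-\mu(e_{jk_j}s_j)}f^*_{jk_j}$ yields
\[
\tilde{\imath}(f_{jk_j})=(0,\dots,0,\,e_{jk_j}s_jT_j^{r_{\bbX}},\,0,\dots,0).
\]
Each assertion then reduces to an elementary statement in the single local ring $\calO_{\bbX,p_j}$ about the socle element $s_j$ and the residues $\overline{e}_{jk_j}$.

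For (a), I would multiply: for $f\in R_i$ the element $\tilde{\imath}(f\cdot f_{jl})$ is supported in the $j$-th slot with entry $f_{p_j}e_{jl}s_jT_j^{i+r_{\bbX}}$. Because $s_j\in\fG(\calO_{\bbX,p_j})=\operatorname{Ann}_{\calO_{\bbX,p_j}}(\fm_{\bbX,p_j})$, the product $as_j$ depends only on the residue $\overline{a}\in\kappa(p_j)$; writing $\overline{f_{p_j}e_{jl}}=\sum_{k_j}c_{jk_jl}\overline{e}_{jk_j}$ in the $K$-basis $\{\overline{e}_{jk_j}\}$ of $\kappa(p_j)$ gives $f_{p_j}e_{jl}s_j=\sum_{k_j}c_{jk_jl}e_{jk_j}s_j$ with $c_{jk_jl}\in K$. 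Recognizing $(0,\dots,0,e_{jk_j}s_jT_j^{i+r_{\bbX}},0,\dots,0)=\tilde{\imath}(x_0^if_{jk_j})$ and invoking injectivity of $\tilde{\imath}$ delivers the claimed expression. For (b), the same computation shows $f\cdot f_{jl}=0$ if and only if $f_{p_j}e_{jl}s_j=0$ in $\calO_{\bbX,p_j}$. If $f_{p_j}\notin\fm_{\bbX,p_j}$ then $f_{p_j}$ is a unit and, since $\{e_{j1}s_j,\dots,e_{j\varkappa_j}s_j\}$ is $K$-linearly independent (so $e_{jl}s_j\ne 0$), the product is nonzero; if $f_{p_j}\in\fm_{\bbX,p_j}$ then $f_{p_j}e_{jl}\in\fm_{\bbX,p_j}$ is annihilated by $s_j$, so the product is zero. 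Thus $f\cdot f_{jl}\ne 0\Leftrightarrow f_{p_j}\notin\fm_{\bbX,p_j}$, a condition independent of $l$; this is precisely the ``moreover'' statement and it immediately yields the first assertion of (b).

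For (c), I would apply $\tilde{\imath}$ to $f_{jk_j}\cdot f_{j'k_{j'}}$, writing primes for the data attached to $\bbY'$. When $j\ne j'$ the two images have disjoint support, so the product vanishes, giving the ``otherwise'' case. When $j=j'$ the surviving $j$-th slot is $e_{jk_j}s_j\,e'_{jk_{j'}}s'_jT_j^{2r_{\bbX}}$, and a case split on $d:=\dim_{\kappa(p_j)}(\calO_{\bbX,p_j})$ finishes things. If $d>1$ then $\calO_{\bbX,p_j}$ is not a field, so $\fm_{\bbX,p_j}\ne\langle 0\rangle$ and its socle lies inside $\fm_{\bbX,p_j}$; hence $e_{jk_j}s_j\in\fm_{\bbX,p_j}$, so $e_{jk_j}s_j\,e'_{jk_{j'}}\in\fm_{\bbX,p_j}$ is killed by the socle element $s'_j$ and the product vanishes (again the ``otherwise'' case). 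If $d=1$ then $\calO_{\bbX,p_j}=\kappa(p_j)$ is a field, $\{e_{j1}s_j,\dots,e_{j\varkappa_j}s_j\}$ is a $K$-basis of it, and the slot entry is a combination $\sum_k c_ke_{jk}s_j$ with $c_k\in K$; matching each summand with the $j$-th slot of $\tilde{\imath}(x_0^{r_{\bbX}}f_{jk})$ and using injectivity places the product in $x_0^{r_{\bbX}}\langle f_{j1},\dots,f_{j\varkappa_j}\rangle_K$.

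The computations are routine once the image formula for $f_{jk_j}$ is in hand; the only genuinely delicate point, and the main obstacle, is the dichotomy in (c), where one must use that the socle of $\calO_{\bbX,p_j}$ is contained in $\fm_{\bbX,p_j}$ exactly when the local ring fails to be a field, together with the $K$-basis/linear-independence property of $\{e_{jk_j}s_j\}$. Everything else is a direct componentwise calculation transferred back through the injectivity of $\tilde{\imath}$.
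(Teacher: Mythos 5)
Your proof is correct, and for parts (b) and (c) it is precisely the argument the paper intends: the paper's proof of those parts simply says to use the injection $\tilde{\imath}$ and the fact that $(f_{jk_j})_{p_j}=e_{jk_j}s_j$ is a socle element, which is exactly what you carry out, including the two facts that do the real work (the $K$-linear independence of $\{e_{j1}s_j,\dots,e_{j\varkappa_j}s_j\}$, and the inclusion $\fG(\calO_{\bbX,p_j})\subseteq\fm_{\bbX,p_j}$ precisely when $\calO_{\bbX,p_j}$ is not a field). The only divergence is in part (a): the paper observes that $f\cdot f_{jl}$ lies in the ideal $I_{\bbY/\bbX}$ and quotes the equality $(I_{\bbY/\bbX})_{r_{\bbX}+i}=\langle x_0^if_{j1},\dots,x_0^if_{j\varkappa_j}\rangle_K$, which was established inside the proof of Proposition~\ref{propSec1.5}(b), whereas you re-derive the membership directly by expanding the residue $\overline{f_{p_j}e_{jl}}$ in the $K$-basis $\{\overline{e}_{j1},\dots,\overline{e}_{j\varkappa_j}\}$ of $\kappa(p_j)$ and pulling back along $\tilde{\imath}$. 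The content is the same; the paper's version is shorter because it reuses an already-proved Hilbert-function fact, while yours is self-contained and additionally exhibits the coefficients $c_{jk_jl}$ explicitly as the coordinates of $\overline{f_{p_j}e_{jl}}$ in that basis.
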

\begin{proof}
Claim (a) is a consequence of the fact that
$$
f \cdot f_{jl} \in (I_{\bbY/\bbX})_{r_{\bbX}+i}
=\langle x_0^if_{j1}, \dots, x_0^if_{j\varkappa_j} \rangle_K
$$
for $l = 1, \dots, \varkappa_j$.
Claims~(b) and~(c) follow by using the injection
$\tilde{\imath}$ and the fact that $(f_{jk_j})_{p_j}$
is a socle element of~$\fG(\calO_{\bbX,p_j})$ for
$k_j = 1, \dots, \varkappa_j$.
\end{proof}

In the case that the scheme $\bbX$ is reduced and
$i\ge r_\bbX$, we can use standard sets of separators of~$\bbX$
to describe a $K$-basis of the vector space $R_i$
as follows (see \cite[Proposition~1.13(a)]{GKR} for the case
of sets of distinct $K$-rational points).

\begin{cor}
Let \,$\bbX \,\subseteq\, \bbP^n_K$\, be a reduced
$0$-dimensional scheme with support
$\Supp(\bbX)=\{ p_1, \dots, p_s \}$, let
$\{ f_{j1}, \dots, f_{j\varkappa_j} \}
\subseteq R_{r_{\bbX}}$ be a standard set of separators
of~$\bbX\setminus\{p_j\}$ in~$\bbX$ for $j = 1, \dots, s$.
Then the set
$$
\{\, x_0^{i-r_{\bbX}}f_{11}, \dots,
   x_0^{i-r_{\bbX}}f_{1\varkappa_1}, \dots,
   x_0^{i-r_{\bbX}}f_{s1}, \dots,
   x_0^{i-r_{\bbX}}f_{s\varkappa_s} \,\}
$$
is a $K$-basis of~$R_i$ for every $i \ge r_{\bbX}$.
\end{cor}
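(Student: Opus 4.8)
The plan is to verify that the displayed set is $K$-linearly independent and that its cardinality equals $\dim_K R_i$; since $R_i$ is finite-dimensional, these two facts together force it to be a $K$-basis. I would therefore split the argument into a counting step and an independence step, carrying out the latter inside the homogeneous ring of quotients $Q^h(R)$.

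For the count, I would use that $\bbX$ is reduced, so that each local ring $\calO_{\bbX,p_j}$ coincides with its residue field $\kappa(p_j)$; hence $\dim_K\calO_{\bbX,p_j}=\varkappa_j$ and $\deg(\bbX)=\sum_{j=1}^s\varkappa_j$. Because $\HF_\bbX(i)=\deg(\bbX)$ for every $i\ge r_\bbX$, the space $R_i$ has dimension $\sum_{j=1}^s\varkappa_j$, which is exactly the number of listed elements $x_0^{i-r_\bbX}f_{jk_j}$. Thus it remains to prove linear independence.

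For the independence step I would pass to $Q^h(R)\cong\prod_{j=1}^s\calO_{\bbX,p_j}[T_j,T_j^{-1}]$ through the injection $\tilde{\imath}$. Since the germ of $x_0$ at each point is $1$, multiplication by $x_0$ corresponds to multiplication by $(T_1,\dots,T_s)$; combining this with the defining equality $\tilde{\imath}(f_{jk_j})=(0,\dots,0,e_{jk_j}s_jT_j^{r_\bbX},0,\dots,0)$ of a standard separator, I obtain $\tilde{\imath}(x_0^{i-r_\bbX}f_{jk_j})=(0,\dots,0,e_{jk_j}s_jT_j^{i},0,\dots,0)$. The crucial structural point is that these images are supported in pairwise different factors of the product according to $j$. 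Hence, applying $\tilde{\imath}$ to a vanishing relation $\sum_{j,k_j}c_{jk_j}x_0^{i-r_\bbX}f_{jk_j}=0$ and reading off the $j$-th component yields $\sum_{k_j}c_{jk_j}e_{jk_j}s_j=0$ in $\calO_{\bbX,p_j}$ for each fixed $j$ separately.

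I would then finish by invoking the $K$-linear independence of $\{e_{j1}s_j,\dots,e_{j\varkappa_j}s_j\}$ established in the proof of Proposition~\ref{propSec1.5}(c); in the reduced case this is immediate, since $s_j$ is a unit of the field $\calO_{\bbX,p_j}$ and the $e_{jk_j}$ form a $K$-basis of it. This forces all $c_{jk_j}=0$. I do not expect a genuine obstacle here: the only care needed is bookkeeping, namely confirming that in the reduced case $\bbX\setminus\{p_j\}$ really is the maximal $p_j$-subscheme (its socle is all of the field $\calO_{\bbX,p_j}$) and that the factor-by-factor support of the $\tilde{\imath}$-images lets the single global relation decouple into $s$ independent relations. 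The heart of the matter is simply repackaging the separator structure through $\tilde{\imath}$.
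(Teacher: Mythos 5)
Your proposal is correct and follows essentially the same route as the paper: both arguments push the elements $x_0^{i-r_\bbX}f_{jk_j}$ into $Q^h(R)\cong\prod_{j=1}^s\calO_{\bbX,p_j}[T_j,T_j^{-1}]$ via $\tilde{\imath}$, exploit that the images $(0,\dots,0,e_{jk_j}s_jT_j^{i},0,\dots,0)$ live in distinct factors with coefficients forming a $K$-basis of the field $\calO_{\bbX,p_j}=\kappa(p_j)$, and use $\HF_\bbX(i)=\deg(\bbX)=\sum_j\varkappa_j$ for $i\ge r_\bbX$. The only cosmetic difference is that the paper concludes by noting $\tilde{\imath}|_{R_i}\colon R_i\to Q^h(R)_i$ is an isomorphism and pulling back a basis of $Q^h(R)_i$, whereas you verify independence in $R_i$ directly (via injectivity of $\tilde{\imath}$) and finish with the dimension count.
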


\begin{proof}
Since the scheme $\bbX$ is reduced, we have
$\calO_{\bbX,p_j} = \kappa(p_j) = \fG(\calO_{\bbX,p_j})$
for $j = 1, \dots, s$. Let $i \ge r_{\bbX}$. We write
$$
\tilde{\imath}(x_0^{i-r_{\bbX}}f_{jk_j})
=(0, \dots, 0, e_{jk_j}T_j^{i}, 0, \dots, 0)
\in Q^h(R)
$$
for $j = 1, \dots, s$ and $k_j = 1, \dots, \varkappa_j$,
where $\{e_{j1}, \dots, e_{j\varkappa_j}\}$ is
a $K$-basis of~$\calO_{\bbX,p_j}$. Then the set
$\{\, \tilde{\imath}(x_0^{i-r_{\bbX}}f_{11}),
\dots, \tilde{\imath}(x_0^{i-r_{\bbX}}f_{s\varkappa_s})\,\}$
is $K$-linearly independent, and so it forms a $K$-basis
of~$Q^h(R)_i$. Since $i \ge r_{\bbX}$, the restriction
$\tilde{\imath}|_{R_{i}}: R_{i}\rightarrow Q^h(R)_i$
is an isomorphism of $K$-vector spaces, it follows that
$\{\, x_0^{i-r_{\bbX}}f_{11}, \dots,
x_0^{i-r_{\bbX}}f_{s\varkappa_s} \,\}$
is a $K$-basis of~$R_i$.
\end{proof}

\medskip\bigbreak
\section{Dedekind Differents of 0-Dimensional Schemes}

In this section we define and examine the Dedekind complementary
module and the Dedekind different for a 0-dimensional scheme
$\bbX\subseteq\bbP^n_K$. For this we need to restrict our
attention to a special class of $0$-dimensional schemes,
namely locally Gorenstein schemes.
Here we say that $\bbX$ is {\bf locally Gorenstein}
if the local ring $\calO_{\bbX,p_j}$ is a Gorenstein ring
for every point $p_j\in\Supp(\bbX)$.

Recall that the graded $R$-module
$\omega_R = \underline{\Hom}_{K[x_0]}(R,K[x_0])(-1)$
is called the {\bf canonical module} of~$R$.
It is a finitely generated graded $R$-module
with Hilbert function
$\HF_{\omega_R}(i)=\deg(\bbX)-\HF_\bbX(-i)$
for all $i\in\bbZ$ (see \cite[Proposition~1.3]{Kr1}).

In the following we assume that $\bbX\subseteq\bbP^n_K$ is
a 0-dimensional locally Gorenstein scheme and let
$L_0 = K[x_0,x_0^{-1}]$.
In this case one can embed the canonical module $\omega_R$
of~$R$ as a fractional ideal into its homogeneous ring
of quotients $Q^h(R)$ (see \cite{HKW} or \cite[Appendix~G]{Ku2}).
Explicitly, this construction is based on the existence of
a homogeneous trace map of the graded algebra $Q^h(R)/L_0$.
Recall that a {\bf homogeneous trace map} of
a finite graded algebra~$T/S$ is a homogeneous $T$-basis of
the graded module $\underline{\Hom}_{S}(T,S)$.
For further information on (canonical, homogeneous) trace maps
we refer to \cite[Appendix~F]{Ku2}.

The following proposition indicates that the graded algebra
$Q^h(R)/L_0$ has a homogeneous trace map of degree zero,
which is shown in~\cite[Proposition~3.3]{KL}.

\begin{prop} \label{propSec2.1}
The following statements hold true.
\begin{enumerate}
\item The algebra $Q^h(R)/L_0$ has a homogeneous trace
map $\sigma$ of degree zero.

\item The map
$\Sigma: Q^h(R)\rightarrow \underline{\Hom}_{L_0}(Q^h(R),L_0)$
given by $\Sigma(1)=\sigma$ is an isomorphism of graded
$Q^h(R)$-modules.
\end{enumerate}
\end{prop}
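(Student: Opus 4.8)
The plan is to exploit the product decomposition $Q^h(R)\cong\prod_{j=1}^s\calO_{\bbX,p_j}[T_j,T_j^{-1}]$ recorded above, which reduces both the construction of a homogeneous trace map and the proof that $\Sigma$ is an isomorphism to the case of a single factor, and then to reduce each factor to the classical fact that an Artinian Gorenstein local $K$-algebra has a free rank-one $K$-dual. Note first that (a) and (b) are two phrasings of one assertion: a homogeneous trace map of degree zero is precisely a homogeneous degree-zero element $\sigma$ that freely generates $\underline{\Hom}_{L_0}(Q^h(R),L_0)$ as a $Q^h(R)$-module, and this is exactly the statement that $\Sigma\colon q\mapsto q\sigma$ is an isomorphism. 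So I would prove (b) and read off (a) at the end.

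Write $A_j:=\calO_{\bbX,p_j}[T_j,T_j^{-1}]$ and $B_j:=\calO_{\bbX,p_j}$. Since no point of $\Supp(\bbX)$ lies on $\calZ(X_0)$, the germ $(x_0)_{p_j}$ is a unit $u_j\in B_j^\times$, so under the decomposition $x_0$ acts on the $j$-th factor as $u_jT_j$. First I would observe that the finite product splits the Hom-module, $\underline{\Hom}_{L_0}(Q^h(R),L_0)\cong\prod_{j=1}^s\underline{\Hom}_{L_0}(A_j,L_0)$, compatibly with the componentwise $Q^h(R)$-action, so it suffices to treat each $A_j$ separately. Next I would record that $A_j$ is graded-free over $L_0$ with $A_{j,i}=x_0^iA_{j,0}$ and $A_{j,0}=B_j$ (here the invertibility of $u_j$ is used), so any $K$-basis of $B_j$ placed in degree $0$ is an $L_0$-basis of $A_j$; the same holds for $\underline{\Hom}_{L_0}(A_j,L_0)$, whose degree-zero part is canonically $\Hom_K(B_j,K)$.

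Because $\bbX$ is locally Gorenstein, $B_j$ is an Artinian Gorenstein local $K$-algebra, so $\Hom_K(B_j,K)$ is a free $B_j$-module of rank one; I would fix a generator $\omega_j$, i.e. a $K$-linear form whose pairing $(a,b)\mapsto\omega_j(ab)$ is nondegenerate. Then I would define $\sigma_j$ on homogeneous elements by $\sigma_j(aT_j^i):=\omega_j(au_j^{-i})\,x_0^i$, verify it is $L_0$-linear and homogeneous of degree $0$ (these are the routine identities $\sigma_j(x_0\cdot aT_j^i)=x_0\,\sigma_j(aT_j^i)$), and note $\sigma_j|_{B_j}=\omega_j$. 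Setting $\sigma:=(\sigma_1,\dots,\sigma_s)$ gives the candidate. To finish I would show that the $Q^h(R)$-linear degree-zero map $\Sigma_j\colon A_j\to\underline{\Hom}_{L_0}(A_j,L_0)$, $b\mapsto b\sigma_j$, is an isomorphism: in degree $0$ it is $B_j\to\Hom_K(B_j,K)$, $b\mapsto b\omega_j$, which is bijective exactly by the Gorenstein property; and since both source and target arise from their degree-$0$ parts by the invertible shift $x_0^i(-)$, a graded $L_0$-linear map bijective in degree $0$ is bijective in every degree. Assembling over $j$ shows $\Sigma$ is an isomorphism of graded $Q^h(R)$-modules, giving (b), and then $\sigma=\Sigma(1)$ is the desired homogeneous trace map of degree zero, giving (a).

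The only substantive input is the passage from the Artinian Gorenstein property of the $B_j$ to rank-one freeness of $\underline{\Hom}_{L_0}(A_j,L_0)$ over $A_j$; everything else is bookkeeping with the grading and the unit $u_j$. The main obstacle I anticipate is making the reduction to degree $0$ fully rigorous — justifying that a graded $L_0$-linear map between modules free on their degree-$0$ parts (via the invertible shift by $x_0$) is an isomorphism as soon as it is so in degree $0$ — and checking that the identification $\underline{\Hom}_{L_0}(A_j,L_0)_0\cong\Hom_K(B_j,K)$ intertwines the $B_j$-action with the $A_j$-action on $\sigma_j$, so that the nondegeneracy of $\omega_j$ is transported correctly to the generator $\sigma_j$.
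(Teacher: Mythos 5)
Your proof is correct and takes essentially the same route as the paper: the paper itself only cites \cite[Proposition~3.3]{KL} for this statement, and its later discussion (just before Proposition~\ref{propSec3.4}) shows that $\sigma$ is obtained exactly as you construct it, namely by assembling local trace maps $\overline{\sigma}_j$ of the Artinian Gorenstein algebras $\calO_{\bbX,p_j}/K$ through the decomposition $Q^h(R)\cong\prod_{j=1}^s\calO_{\bbX,p_j}[T_j,T_j^{-1}]$. Your degree-zero reduction using the unit germ of $x_0$ and the shift $x_0^i(-)$ is the bookkeeping implicit in that construction, so nothing essential differs.
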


Now let $\sigma$ be a fixed homogeneous trace map of degree zero
of~$Q^h(R)/L_0$. Note that
$\sigma\in \underline{\Hom}_{L_0}(Q^h(R),L_0)$
satisfies $\underline{\Hom}_{L_0}(Q^h(R),L_0)
= Q^h(R)\cdot\sigma$.
Furthermore, there is an injective homomorphism of
graded $R$-modules
\begin{equation}\label{mapSec2.1}
\begin{aligned}
\Phi: \omega_R(1) &
\lhook\joinrel\longrightarrow \underline{\Hom}_{L_0}(Q^h(R),L_0)
= Q^h(R)\cdot\sigma \xrightarrow{\Sigma^{-1}} Q^h(R)\\
\varphi &\longmapsto \varphi\otimes \mathrm{id}_{L_0}
\end{aligned}
\end{equation}
The image of~$\Phi$ is a homogeneous fractional
$R$-ideal~$\fC^{\sigma}_\bbX$ of~$Q^h(R)$. It is also
a finitely generated graded $R$-module and
$$
\HF_{\fC^{\sigma}_{\bbX}}(i) = \deg(\bbX) - \HF_{\bbX}(-i-1)
\quad \textrm{for all} \ i\in\bbZ.
$$

\begin{defn}
The $R$-module $\fC^{\sigma}_{\bbX}$
is called the \textbf{Dedekind complementary module} of~$\bbX$
(or of~$R/K[x_0]$) with respect to~$\sigma$. Its inverse,
$$
\delta^{\sigma}_\bbX = (\fC^{\sigma}_\bbX)^{-1}
= \{\, f \in Q^h(R) \, \mid \,
f \cdot \fC^{\sigma}_\bbX \subseteq R \, \},
$$
is called the \textbf{Dedekind different} of $\bbX$
(or of~$R/K[x_0]$) with respect to~$\sigma$.
\end{defn}

When $\bbX$ is a finite set of distinct $K$-rational points
of~$\bbP^n_K$, we also denote the Dedekind complementary
module (respectively, the Dedekind different)
with respect to the canonical trace map by $\fC_\bbX$
(respectively, $\delta_\bbX$).

A system of generators of $\fC^{\sigma}_{\bbX}$ can be
computed as follows.

\begin{rem}\label{remSec2.3}
Let $<_{\tau}$ be a degree-compatible term ordering
on the set of terms $\mathbb{T}^n$ of~$K[X_1,\dots,X_n]$,
and let $d = \deg(\bbX)$. Then
$\mathbb{T}^n\setminus \mathrm{LT}_{\tau}(I_{\bbX}^{\mathrm{deh}})
= \{\, T'_1, \dots, T'_d \,\}$ with
$T'_j = X_1^{\alpha_{j1}}\cdots X_n^{\alpha_{jn}}$ and
$\alpha_j=(\alpha_{j1}, \dots, \alpha_{jn}) \in \bbN^n$
for $j = 1, \dots, d$. W.l.o.g. we assume that
$T'_1<_{\tau}\cdots<_{\tau}T'_m$.
Let $t_j = T'_j + I_{\bbX} \in R$ and set
$\deg(t_j) := \deg(T'_j) = n_j$ for $j = 1, \dots, d$.
Then $n_1 \le \cdots \le n_d \le r_{\bbX}$ and the set
$\{\, t_1, \dots, t_d \,\}$ is a $K[x_0]$-basis of~$R$
(cf.~\cite[Theorem~4.3.22]{KR2}).
Let $\{t_1^*, \dots, t_d^*\}$ be the dual basis
of~$\{t_1, \dots, t_d\}$,
and let $g_j = \Phi(t_j^*)$ for $j = 1, \dots, d$.
We get $\fC^{\sigma}_{\bbX} = \langle g_1, \dots, g_d
\rangle_{K[x_0]} \subseteq Q^h(R)$.
\end{rem}

Now we want to take a closer look at each homogeneous component
of the Dedekind complementary module of $\bbX$. For this we use
the following notation.  Let $\nu_j := \dim_K(\calO_{\bbX,p_j})$
and let $\{e_{j1}, \dots, e_{j\nu_j}\}$ be a $K$-basis
of~$\calO_{\bbX,p_j}$ for $j=1,\dots,s$.
Using the injection
$\tilde{\imath}: R \hookrightarrow Q^h(R)$,
we set
$$
f_{jk_j} :=
\tilde{\imath}^{-1}((0,\dots,0,e_{jk_j}T_j^{r_{\bbX}},0,\dots,0))
$$
for $k_j = 1, \dots, \nu_j$. It is easy to see that
$R_{r_{\bbX}} = \langle f_{11},\dots,f_{1\nu_1},\dots,
f_{s1},\dots,f_{s\nu_s} \rangle_K$.
Since $\bbX$ is locally Gorenstein, $\calO_{\bbX,p_j}/K$ has
a trace map
$\overline{\sigma}_j \in \Hom_{K}(\calO_{\bbX,p_j},K)$.
Also, there is a $K$-basis  $\{e'_{j1}, \dots, e'_{j\nu_j}\}$
of~$\calO_{\bbX,p_j}$ such that
$$
\overline{\sigma}_j(e_{jk_j}e'_{jk'_j}) =
e^*_{jk'_j}(e_{jk_j}) = \delta_{k_jk'_j}
$$
for all $k_j, k'_j = 1, \dots, \nu_j$.
The $K$-basis $\{e'_{j1}, \dots, e'_{j\nu_j}\}$ is known as a
{\bf dual basis of $\calO_{\bbX,p_j}$ to the $K$-basis
$\{e_{j1},\dots,e_{j\nu_j}\}$ w.r.t.~$\overline{\sigma}_j$}.
Moreover, these maps $\overline{\sigma}_j$ induce
a homogeneous trace map $\sigma$ of degree zero of~$Q^h(R)/L_0$.

A description of the Dedekind complementary module of $\bbX$
is given by our next proposition.

\begin{prop}\label{propSec3.4}
Using the above notation, let $\Phi$ be the monomorphism
of graded $R$-modules defined by~(\ref{mapSec2.1}),
let $i \ge 0$, and let $\varphi \in (\omega_R)_{i-r_{\bbX}+1}$.
We write $\varphi(f_{jk_j}) = c_{jk_j}x_0^i$ with
$c_{jk_j} \in K$. Then we have
$$
\Phi(\varphi) =
\big({\textstyle\sum\limits_{k_1=1}^{\nu_1}}
c_{1k_1}e'_{1k_1}T_1^{i-r_{\bbX}}, \dots,
{\textstyle\sum\limits_{k_s=1}^{\nu_s}}
c_{sk_s}e'_{sk_s}T_s^{i-r_{\bbX}} \big)
\in (\fC^{\sigma}_\bbX)_{i-r_{\bbX}}.
$$
In particular, $\Phi(\varphi)$ can be identified with the element
$x_0^{i-2r_{\bbX}}(\sum_{j=1}^s\sum_{k_j=1}^{\nu_j}
c_{jk_j}\widetilde{f}_{jk_j})$ of $R_{x_0}\cong Q^h(R)$,
where $\widetilde{f}_{jk_j}=
\tilde{\imath}^{-1}((0,\dots,0,e'_{jk_j}T_j^{r_{\bbX}},0,\dots,0))
\in R_{r_{\bbX}}$ for all $j=1,\dots,s$ and for all
$k_j = 1, \dots, \nu_j$.
\end{prop}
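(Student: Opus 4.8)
The plan is to compute $\Phi(\varphi)$ directly by unwinding the definition of the map $\Phi$ in (\ref{mapSec2.1}) and checking that it agrees with the claimed tuple in each local factor of $Q^h(R) \cong \prod_{j=1}^s \calO_{\bbX,p_j}[T_j,T_j^{-1}]$. Recall that $\Phi$ sends $\varphi \in \omega_R(1)$ first to the element $\varphi \otimes \mathrm{id}_{L_0}$ of $\underline{\Hom}_{L_0}(Q^h(R),L_0) = Q^h(R)\cdot\sigma$, and then applies $\Sigma^{-1}$, where $\Sigma$ is the isomorphism of Proposition~\ref{propSec2.1}(b) determined by $\Sigma(1) = \sigma$. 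Thus the key is: if $\varphi \otimes \mathrm{id}_{L_0} = q \cdot \sigma$ for some $q \in Q^h(R)$, then $\Phi(\varphi) = \Sigma^{-1}(q\sigma) = q$. So I must identify the element $q \in Q^h(R)$ whose associated functional $f \mapsto \sigma(qf)$ coincides with the extension of $\varphi$ to $Q^h(R)$.

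First I would fix the degree bookkeeping: since $\varphi \in (\omega_R)_{i-r_\bbX+1}$, its image under $\Phi$ lives in degree $(i-r_\bbX+1) - 1 = i - r_\bbX$, matching the claim $\Phi(\varphi) \in (\fC^\sigma_\bbX)_{i-r_\bbX}$. Next I would use that $R_{r_\bbX} \cong Q^h(R)_{r_\bbX}$ via $\tilde\imath$, so the elements $f_{jk_j}$ with $\tilde\imath(f_{jk_j}) = (0,\dots,0,e_{jk_j}T_j^{r_\bbX},0,\dots,0)$ form a $K$-basis of $R_{r_\bbX}$, and the values $\varphi(f_{jk_j}) = c_{jk_j}x_0^i$ determine $\varphi$ completely as a $K[x_0]$-linear functional (after multiplying by powers of $x_0$). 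The heart of the computation is to transport the local trace maps $\overline\sigma_j$ and their dual bases $\{e'_{jk_j}\}$ through $\sigma$: because $\sigma$ is induced by the $\overline\sigma_j$, the functional $g \mapsto \sigma(g \cdot \widetilde{f}_{jk_j})$ — where $\widetilde f_{jk_j}$ corresponds to the dual basis element $e'_{jk_j}T_j^{r_\bbX}$ — should act on the basis element $f_{j'k'_{j'}}$ by picking out $\overline\sigma_j(e_{j'k'_{j'}} e'_{jk_j}) = \delta_{jj'}\delta_{k_j k'_{j'}}$. This is precisely the duality relation stipulated in the setup before the proposition.

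Concretely, I would propose the candidate $q = x_0^{i-2r_\bbX}\big(\sum_{j=1}^s \sum_{k_j=1}^{\nu_j} c_{jk_j}\widetilde f_{jk_j}\big)$ and verify that $\sigma(q \cdot f_{j'k'_{j'}}) = c_{j'k'_{j'}} x_0^i = \varphi(f_{j'k'_{j'}})$ for every basis element, using the degree-zero homogeneity of $\sigma$ together with the local duality $\overline\sigma_j(e_{jk_j}e'_{jk'_j}) = \delta_{k_jk'_j}$. Since both $q\sigma$ and $\varphi\otimes\mathrm{id}_{L_0}$ are $L_0$-linear functionals agreeing on a $K[x_0]$-basis of $R_{r_\bbX}$ (and hence, after clearing powers of $x_0$, on all of $Q^h(R)$ in the relevant degrees), they coincide, giving $\Phi(\varphi) = q$. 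Translating $q$ back under the isomorphism $Q^h(R) \cong \prod_j \calO_{\bbX,p_j}[T_j,T_j^{-1}]$ yields exactly the tuple $\big(\sum_{k_1} c_{1k_1}e'_{1k_1}T_1^{i-r_\bbX}, \dots, \sum_{k_s} c_{sk_s}e'_{sk_s}T_s^{i-r_\bbX}\big)$ asserted in the statement, since $\widetilde f_{jk_j}$ corresponds to $e'_{jk_j}T_j^{r_\bbX}$ and $x_0$ acts as $T_j$ in the $j$-th factor.

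The main obstacle I anticipate is correctly matching the trace-map conventions: one must be careful that the globally-induced $\sigma$ on $Q^h(R)/L_0$ restricts in each factor to (a $T_j$-homogeneous extension of) the local $\overline\sigma_j$, so that $\sigma$ applied to a product $\widetilde f_{jk_j}\cdot f_{j'k'_{j'}}$ genuinely reproduces $\overline\sigma_j(e'_{jk_j}e_{j'k'_{j'}})$ in the appropriate $x_0$-degree and vanishes across distinct factors. Once this compatibility between the local and global trace maps is nailed down — which is really the content of "these maps $\overline\sigma_j$ induce a homogeneous trace map $\sigma$" in the preceding paragraph — the remaining verification is a routine bookkeeping of degrees and the orthonormality of dual bases.
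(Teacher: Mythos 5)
Your proposal is correct and follows essentially the same route as the paper's proof: both identify the element $q \in Q^h(R)$ with $\varphi\otimes\mathrm{id}_{L_0} = q\cdot\sigma$ by testing against a basis (the paper uses the $L_0$-basis $\epsilon_{jk_j}=(0,\dots,0,e_{jk_j},0,\dots,0)$, you use the equivalent basis $\tilde{\imath}(f_{jk_j}) = x_0^{r_\bbX}\epsilon_{jk_j}$) and both reduce the verification to the dual-basis relation $\overline{\sigma}_j(e_{jk_j}e'_{jk'_j})=\delta_{k_jk'_j}$. The only cosmetic difference is the order: the paper first obtains the tuple form of $\Phi(\varphi)$ and then derives the identification with $x_0^{i-2r_\bbX}\sum_{j,k_j}c_{jk_j}\widetilde{f}_{jk_j}$, whereas you propose that element as the candidate from the start and translate to the tuple form afterwards.
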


\begin{proof}
We set $\epsilon_{jk_j}:=(0, \dots, 0, e_{jk_j}, 0, \dots, 0)
\in \prod_{l=1}^s \calO_{\bbX,p_l}$ for $j = 1, \dots, s$
and $k_j = 1, \dots, \nu_j$.
It is not difficult to see that the set
$\{\epsilon_{11}, \dots, \epsilon_{1\nu_1}, \dots,
\epsilon_{s1}, \dots, \epsilon_{s\nu_s}\}$
is a $L_0$-basis of~$Q^h(R)$.
So, the mapping $\varphi\otimes\mathrm{id}_{L_0}: Q^h(R)
\cong  R\otimes_{K[x_0]}L_0 \rightarrow L_0$ satisfies
$$
(\varphi\otimes\mathrm{id}_{L_0})(x_0^{r_{\bbX}}\epsilon_{jk_j})
= (\varphi\otimes\mathrm{id}_{L_0})((0, \dots, 0,
e_{jk_j}T_j^{r_{\bbX}}, 0, \dots, 0))
= \varphi(f_{jk_j}) = c_{jk_j}x_0^i
$$
for $j = 1, \dots, s$ and $k_j = 1, \dots, \nu_j$.
Thus we have $(\varphi\otimes\mathrm{id}_{L_0})(\epsilon_{jk_j})
= c_{jk_j}x_0^{i-r_{\bbX}}$
for all $j = 1, \dots, s$ and $k_j = 1, \dots, \nu_j$.
On the other hand, we see that
$$
\begin{aligned}
&\big( {\textstyle\sum\limits_{k_1=1}^{\nu_1}} c_{1k_1}e'_{1k_1},
\dots, {\textstyle\sum\limits_{k_s=1}^{\nu_s}}
c_{sk_s}e'_{sk_s} \big) \cdot \sigma(\epsilon_{jk_j})
=\sigma\big((0,\dots,0,{\textstyle\sum\limits_{k'_j=1}^{\nu_j}}
c_{jk'_j}e'_{jk'_j}e_{jk_j},0,\dots,0)\big)\\
&=\overline{\sigma}_j\big({\textstyle\sum\limits_{k'_j=1}^{\nu_j}}
c_{jk'_j}e'_{jk'_j}e_{jk_j} \big)
={\textstyle\sum\limits_{k'_j=1}^{\nu_j}}
c_{jk'_j}\overline{\sigma}_j(e'_{jk'_j}e_{jk_j})
= {\textstyle\sum\limits_{k'_j=1}^{\nu_j}}
c_{jk'_j}\delta_{k_jk'_j} =c_{jk_j}.
\end{aligned}
$$
This implies that we have
$$
(\varphi\otimes\mathrm{id}_{L_0}) = x_0^{i-r_{\bbX}} \big(
{\textstyle\sum\limits_{k_1=1}^{\nu_1}} c_{1k_1}e'_{1k_1},\dots,
{\textstyle\sum\limits_{k_s=1}^{\nu_s}} c_{sk_s}e'_{sk_s} \big)
\cdot \sigma
$$
in $\underline{\Hom}_{L_0}(Q^h(R),L_0)$.
Hence we get
$$
\Phi(\varphi)=\big( {\textstyle\sum\limits_{k_1=1}^{\nu_1}}
c_{1k_1}e'_{1k_1}T_1^{i-r_{\bbX}}, \dots,
{\textstyle\sum\limits_{k_s=1}^{\nu_s}}
c_{sk_s}e'_{sk_s}T_s^{i-r_{\bbX}} \big)
\in (\fC^{\sigma}_\bbX)_{i-r_{\bbX}}.
$$

In addition, we observe that
$$
\begin{aligned}
x_0^{2r_{\bbX}}\Phi(\varphi)&= x_0^{2r_{\bbX}} \cdot
\big({\textstyle\sum\limits_{k_1=1}^{\nu_1}}
c_{1k_1}e'_{1k_1}T_1^{i-r_{\bbX}},
\dots, {\textstyle\sum\limits_{k_s=1}^{\nu_s}}
c_{sk_s}e'_{sk_s}T_s^{i-r_{\bbX}} \big)\\
&= \big( {\textstyle\sum\limits_{k_1=1}^{\nu_1}}
c_{1k_1}e'_{1k_1}T_1^{r_{\bbX}+i},\dots,
{\textstyle\sum\limits_{k_s=1}^{\nu_s}}
c_{sk_s}e'_{sk_s}T_s^{r_{\bbX}+i} \big) \\
&= \tilde{\imath} \big( x_0^i ({\textstyle\sum\limits_{j=1}^s
\sum\limits_{k_j=1}^{\nu_j}} c_{jk_j}\widetilde{f}_{jk_j}) \big).
\end{aligned}
$$
Therefore the claim follows.
\end{proof}

Next we collect from~\cite[Proposition~3.7]{KL} the following
basic properties of the Dedekind different of $\bbX$.

\begin{prop}\label{propSec2.5}
Let~$\sigma$ be a trace map of $Q^h(R)/L_0$.

\begin{enumerate}
  \item The Dedekind different $\delta^{\sigma}_\bbX$ is
  a homogeneous ideal of~$R$ and $x_0^{2r_{\bbX}}
  \in \delta^{\sigma}_\bbX$.

  \item The Hilbert function of~$\delta^{\sigma}_\bbX$ satisfies
   $\HF_{\delta^{\sigma}_\bbX}(i) = 0$ for $i < 0$,
   $\HF_{\delta^{\sigma}_\bbX}(i) = \deg(\bbX)$ for
   $i \ge 2r_{\bbX}$, and
   $$
   0 \le \HF_{\delta^{\sigma}_\bbX}(0) \le \cdots
     \le \HF_{\delta^{\sigma}_\bbX}(2r_{\bbX}) = \deg(\bbX).
   $$

  \item The regularity index of~$\delta^{\sigma}_\bbX$
   satisfies $r_{\bbX} \le \ri(\delta^{\sigma}_\bbX)
   \le 2r_{\bbX}$.
\end{enumerate}
\end{prop}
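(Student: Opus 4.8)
The plan is to derive all three statements from two facts: that $R \subseteq \fC^\sigma_\bbX$ inside $Q^h(R)$, and the explicit shape of the homogeneous elements of $\fC^\sigma_\bbX$ recorded in Proposition~\ref{propSec3.4}. For part~(a), the Hilbert function formula $\HF_{\fC^\sigma_\bbX}(i) = \deg(\bbX) - \HF_\bbX(-i-1)$ gives $\HF_{\fC^\sigma_\bbX}(0) = \deg(\bbX) = \dim_K Q^h(R)_0$, whence $(\fC^\sigma_\bbX)_0 = Q^h(R)_0$ and in particular $1 \in \fC^\sigma_\bbX$. Consequently every $f \in \delta^\sigma_\bbX$ satisfies $f = f \cdot 1 \in f \cdot \fC^\sigma_\bbX \subseteq R$, so $\delta^\sigma_\bbX$ is an ideal of $R$; it is homogeneous because the inverse of a graded fractional ideal is graded. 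To obtain $x_0^{2r_\bbX} \in \delta^\sigma_\bbX$, I would invoke Proposition~\ref{propSec3.4}, by which every homogeneous element of $\fC^\sigma_\bbX$ has the form $x_0^{i-2r_\bbX} h$ with $h \in R_{r_\bbX}$ and $i \ge 0$; multiplying by $x_0^{2r_\bbX}$ gives $x_0^i h \in R$, so that $x_0^{2r_\bbX}\fC^\sigma_\bbX \subseteq R$.

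For part~(b), I would describe $\delta^\sigma_\bbX$ degree by degree. Since $\delta^\sigma_\bbX \subseteq R$ we immediately get $\HF_{\delta^\sigma_\bbX}(i) = 0$ for $i < 0$. A homogeneous $f \in R_i$ lies in $\delta^\sigma_\bbX$ exactly when $f\cdot(\fC^\sigma_\bbX)_m \subseteq R_{i+m}$ for every $m$. Now $(\fC^\sigma_\bbX)_m = 0$ for $m < -r_\bbX$ (again from its Hilbert function), while $R_k = Q^h(R)_k$ for $k \ge r_\bbX$; hence $f\cdot(\fC^\sigma_\bbX)_m \subseteq Q^h(R)_{i+m} = R_{i+m}$ automatically whenever $i + m \ge r_\bbX$. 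The only possibly binding constraints thus occur for $-r_\bbX \le m < r_\bbX - i$, a range that is empty once $i \ge 2r_\bbX$. Therefore $(\delta^\sigma_\bbX)_i = R_i$ for $i \ge 2r_\bbX$, giving $\HF_{\delta^\sigma_\bbX}(i) = \deg(\bbX)$ there. Finally, because $x_0$ is a non-zerodivisor and $\delta^\sigma_\bbX$ is an ideal, multiplication by $x_0$ injects $(\delta^\sigma_\bbX)_i$ into $(\delta^\sigma_\bbX)_{i+1}$, so $\HF_{\delta^\sigma_\bbX}$ is non-decreasing; combined with the two boundary values this yields the stated chain.

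For part~(c), the equalities $\HF_{\delta^\sigma_\bbX}(i) = \deg(\bbX)$ for $i \ge 2r_\bbX$ show that the Hilbert polynomial of the finitely generated graded module $\delta^\sigma_\bbX$ is the constant $\deg(\bbX)$, which at once gives $\ri(\delta^\sigma_\bbX) \le 2r_\bbX$. For the lower bound, the inclusion $\delta^\sigma_\bbX \subseteq R$ together with the strict inequality $\HF_\bbX(r_\bbX - 1) < \deg(\bbX)$ forces $\HF_{\delta^\sigma_\bbX}(r_\bbX - 1) \le \HF_\bbX(r_\bbX - 1) < \deg(\bbX)$, so the Hilbert function still disagrees with its polynomial value at degree $r_\bbX - 1$, whence $\ri(\delta^\sigma_\bbX) \ge r_\bbX$.

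The main obstacle is the bookkeeping in part~(b): pinning down exactly which pairs $(i,m)$ can cause $f\cdot(\fC^\sigma_\bbX)_m$ to leave $R$. This rests on pairing the vanishing range of $\fC^\sigma_\bbX$ with the precise degree $r_\bbX$ above which $\tilde{\imath}\colon R_k \to Q^h(R)_k$ becomes an isomorphism. Once this degree analysis is carried out, parts~(a) and~(c) follow as short formal consequences of the inclusion $\delta^\sigma_\bbX \subseteq R$, the membership $x_0^{2r_\bbX} \in \delta^\sigma_\bbX$, and the constancy of the Hilbert polynomial.
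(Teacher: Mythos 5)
Your proposal is correct, but note that the paper itself gives no proof of this proposition: it imports it verbatim from \cite[Proposition~3.7]{KL}, so there is no in-paper argument to match yours against. What you have written is a valid self-contained derivation using only facts the paper does establish, and it hangs together: the Hilbert function formula $\HF_{\fC^\sigma_\bbX}(i)=\deg(\bbX)-\HF_\bbX(-i-1)$ gives both $(\fC^\sigma_\bbX)_0=Q^h(R)_0\ni 1$ (whence $\delta^\sigma_\bbX\subseteq R$) and the vanishing $(\fC^\sigma_\bbX)_m=\langle 0\rangle$ for $m<-r_\bbX$; the latter is exactly what guarantees that Proposition~\ref{propSec3.4} really describes \emph{every} homogeneous element of $\fC^\sigma_\bbX$ as $x_0^{i-2r_\bbX}h$ with $h\in R_{r_\bbX}$, $i\ge 0$, which is the point your membership $x_0^{2r_\bbX}\in\delta^\sigma_\bbX$ silently relies on (since $(\omega_R)_i=\langle 0\rangle$ for $i\le -r_\bbX$, this is fine, but it deserves a sentence). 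The degree bookkeeping in (b) is sound because $\tilde{\imath}\colon R_k\to Q^h(R)_k$ is an isomorphism for $k\ge r_\bbX$ (both spaces have dimension $\deg(\bbX)$), so the constraints $f\cdot(\fC^\sigma_\bbX)_m\subseteq R_{i+m}$ are indeed vacuous once $i\ge 2r_\bbX$; monotonicity via multiplication by the non-zerodivisor $x_0$ is standard; and in (c) the lower bound works because the paper's conventions force $\HF_\bbX(r_\bbX-1)<\deg(\bbX)$, so $\HF_{\delta^\sigma_\bbX}(r_\bbX-1)\le\HF_\bbX(r_\bbX-1)$ disagrees with the constant Hilbert polynomial $\deg(\bbX)$. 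In short: correct proof, and arguably a useful one, since it replaces an external citation by an argument internal to the paper's toolkit.
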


The upper bound for the regularity index of the Dedekind different
given in this proposition is attained for a finite
set of distinct $K$-rational points, as the next corollary shows.

\begin{cor}
Let $\bbX=\{p_1,\dots,p_s\}\subseteq\bbP^n_K$ be
a set of $s$ distinct $K$-rational points. Then we have
$\HP_{\delta_\bbX}(z) = s$ and $\ri(\delta_\bbX) = 2r_{\bbX}$.
\end{cor}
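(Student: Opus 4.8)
The plan is to deduce both assertions from Proposition~\ref{propSec2.5}. Since $\bbX$ is reduced with $K$-rational support, every local ring $\calO_{\bbX,p_j}$ equals $\kappa(p_j)=K$, so $\deg(\bbX)=\sum_{j=1}^s\dim_K\calO_{\bbX,p_j}=s$. By Proposition~\ref{propSec2.5}(b) we then have $\HF_{\delta_\bbX}(i)=\deg(\bbX)=s$ for all $i\ge 2r_\bbX$, whence the Hilbert polynomial, being the constant to which the Hilbert function stabilizes, is $\HP_{\delta_\bbX}(z)=s$. Proposition~\ref{propSec2.5}(c) already gives $\ri(\delta_\bbX)\le 2r_\bbX$, so the whole problem reduces to proving the reverse inequality $\ri(\delta_\bbX)\ge 2r_\bbX$, equivalently $\HF_{\delta_\bbX}(2r_\bbX-1)<s$.

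For this I would work inside $Q^h(R)\cong\prod_{j=1}^s K[T_j,T_j^{-1}]$, identifying each graded piece $Q^h(R)_i$ with $K^s$ via $(T_1^i,\dots,T_s^i)$ and describing $R_i\subseteq Q^h(R)_i$ through the evaluation subspace $V_i:=\{(f(p_1),\dots,f(p_s))\mid f\in R_i\}$, of dimension $\HF_\bbX(i)$, which is all of $K^s$ precisely when $i\ge r_\bbX$. Because $2r_\bbX-1\ge r_\bbX$, the restriction $\tilde{\imath}|_{R_{2r_\bbX-1}}$ is an isomorphism, so $R_{2r_\bbX-1}=Q^h(R)_{2r_\bbX-1}$. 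As $\delta_\bbX\subseteq R$ is an ideal (Proposition~\ref{propSec2.5}(a)), the equality $\HF_{\delta_\bbX}(2r_\bbX-1)=s$ would force $(\delta_\bbX)_{2r_\bbX-1}=R_{2r_\bbX-1}$, that is, $R_{2r_\bbX-1}\cdot\fC_\bbX\subseteq R$. A degree count shows that the only component of $\fC_\bbX$ producing a genuine constraint is the bottom one $(\fC_\bbX)_{-r_\bbX}$ (since $(\fC_\bbX)_m=0$ for $m<-r_\bbX$, while all higher components push the product into degrees $\ge r_\bbX$, where $R=Q^h(R)$). Hence it suffices to contradict $R_{2r_\bbX-1}\cdot(\fC_\bbX)_{-r_\bbX}\subseteq R_{r_\bbX-1}$.

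The key step is to produce one point $p_j$ and one bottom-degree element $c\in(\fC_\bbX)_{-r_\bbX}$ with nonzero $j$-th component $\gamma_j$, where $p_j$ is chosen to admit no separator of degree $r_\bbX-1$. Using Proposition~\ref{propSec3.4} in the case $\nu_j=1$, $e_{j1}=e'_{j1}=1$, the $j$-th component of $(\fC_\bbX)_{-r_\bbX}$ is nonzero if and only if there is $\varphi\in(\omega_R)_{1-r_\bbX}$ with $\varphi(f_{j})\ne 0$; via the standard $K[x_0]$-basis of~$R$ I would translate this into the condition that the standard separator $f_{j}$ (normalized so that $\tilde{\imath}(f_{j})=(0,\dots,0,T_j^{r_\bbX},0,\dots,0)$) does not lie in $x_0R_{r_\bbX-1}$, i.e.\ that $\mu_{(\bbX\setminus\{p_j\})/\bbX}=r_\bbX$. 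Such a point exists, for otherwise every $e_j$ would lie in $V_{r_\bbX-1}$, forcing $\HF_\bbX(r_\bbX-1)=s$ and contradicting the definition of $r_\bbX$.

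Given such $p_j$ and $c$, I would set $f:=x_0^{r_\bbX-1}f_{j}\in R_{2r_\bbX-1}$, so that $\tilde{\imath}(f)=(0,\dots,0,T_j^{2r_\bbX-1},0,\dots,0)$ and $f\cdot c=(0,\dots,0,\gamma_jT_j^{r_\bbX-1},0,\dots,0)$. This product lies in $R_{r_\bbX-1}$ if and only if $\gamma_je_j\in V_{r_\bbX-1}$, i.e.\ (as $\gamma_j\ne0$) if and only if $p_j$ has a degree $r_\bbX-1$ separator, which it does not by our choice. Hence $f\cdot\fC_\bbX\not\subseteq R$, so $f\notin\delta_\bbX$, giving $(\delta_\bbX)_{2r_\bbX-1}\subsetneq R_{2r_\bbX-1}$ and therefore $\HF_{\delta_\bbX}(2r_\bbX-1)<s$, whence $\ri(\delta_\bbX)=2r_\bbX$. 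I expect the main obstacle to be exactly the translation in the third paragraph: pinning down precisely when a bottom-degree element of the Dedekind complementary module has a prescribed nonzero component, and matching this with the separator-degree data of~$\bbX$; the remaining steps are degree bookkeeping inside $Q^h(R)$.
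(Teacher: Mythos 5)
Your proposal is correct, and its skeleton coincides with the paper's: both reduce, via Proposition~\ref{propSec2.5}, to showing $\HF_{\delta_\bbX}(2r_\bbX-1)<s$, and both derive a contradiction by pairing the test element $x_0^{r_\bbX-1}f_j$ (for a point $p_j$ whose separator satisfies $f_j\notin x_0R_{r_\bbX-1}$) against a degree $-r_\bbX$ element of $\fC_\bbX$ whose component at $p_j$ is nonzero, observing that the product would have to lie in $R_{r_\bbX-1}$ and hence force $f_j\in x_0R_{r_\bbX-1}$. Where you genuinely diverge is in how that bottom-degree element of $\fC_\bbX$ is produced. The paper renumbers the points so that $\overline{f}_1,\dots,\overline{f}_{\Delta_\bbX}$ form a basis of $\overline{R}_{r_\bbX}=\bigl(R/\langle x_0\rangle\bigr)_{r_\bbX}$, then invokes the external result \cite[Corollary~1.10]{Kr3} to get an explicit separator-built basis $\widetilde{g}_j=x_0^{-2r_\bbX}\bigl(f_j+\sum_k\beta_{kj}f_k\bigr)$ of $(\fC_\bbX)_{-r_\bbX}$, and computes the product via the multiplication rules of Lemma~\ref{lemSec1.8}. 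You instead need only \emph{existence}: via Proposition~\ref{propSec3.4}, a nonzero $j$-th component in $(\fC_\bbX)_{-r_\bbX}$ amounts to some $\varphi\in(\omega_R)_{1-r_\bbX}$ with $\varphi(f_j)\neq 0$, and your flagged translation does go through, since every $K[x_0]$-linear $\varphi$ of degree $-r_\bbX$ kills $x_0R_{r_\bbX-1}$, while a homogeneous $K[x_0]$-basis of $R$ shows the induced pairing between $(\omega_R)_{1-r_\bbX}$ and $\overline{R}_{r_\bbX}$ is perfect; the product computation then becomes trivial inside $Q^h(R)$ because $\tilde{\imath}(f_j)$ is supported only at the $j$-th factor. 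The trade-off: your route is self-contained within this paper (no appeal to \cite{Kr3}, no Lemma~\ref{lemSec1.8}), handles all $n\geq 1$ uniformly where the paper splits off the $n=1$ complete intersection case, and selects the point $p_j$ by a simple dimension count ($V_{r_\bbX-1}\neq K^s$); the paper's citation-based route is shorter on the page and yields the stronger, explicit description of $(\fC_\bbX)_{-r_\bbX}$ as a by-product.
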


\begin{proof}
If $n=1$, then $\bbX$ is a complete intersection,
and so $\HF_{\delta_\bbX}(i) = \HF_{\bbX}(i-s+1)$ for all
$i\in\bbZ$. In particular, we have
$\ri(\delta_\bbX) = 2r_{\bbX} = (n+1)(s-1)$.

Now suppose that $n\ge 2$.
For $j\in\{1,\dots,s\}$, let $f_j\in R_{r_\bbX}$ be
the separator of~$\bbX\setminus\{p_j\}$ in~$\bbX$ with
$f(p_j)=1$ and $f(p_k)=0$ for $k\ne j$, and let
$\overline{f}_j$ denote the image of $f_j$
in~$\overline{R}:=R/\langle x_0\rangle$.
Set $\Delta_\bbX := \dim_K \overline{R}_{r_{\bbX}}
= \HF_\bbX(r_\bbX)-\HF_\bbX(r_\bbX-1)$.
Note that $\Delta_\bbX \ge 1$.
Since $\{\overline{f}_1,\dots,\overline{f}_s\}$ generates
the $K$-vector space $\overline{R}_{r_{\bbX}}$,
we can renumber $\{p_1,\dots,p_s\}$ in such a way that
$\{\overline{f}_1,\dots,\overline{f}_{\Delta_\bbX}\}$
is a $K$-basis of $\overline{R}_{r_{\bbX}}$.
Because $\overline{f}_i\ne 0$ for every $i\in\{1,\dots,\Delta_\bbX\}$,
this implies $f_1,\dots,f_{\Delta_\bbX}\notin x_0R_{r_\bbX-1}$.
For $j=1,\dots,s-\Delta_\bbX$, we write
$$
\overline{f}_{\Delta_\bbX+j} =
\beta_{j1}\overline{f}_1+\cdots+
\beta_{j\Delta_\bbX}\overline{f}_{\Delta_\bbX}
$$
where $\beta_{j1},\dots,\beta_{j\Delta_\bbX} \in K$.
By~\cite[Corollary~1.10]{Kr3}, the elements
$$
\widetilde{g}_j= x_0^{-2r_{\bbX}}(f_j +
\beta_{1j}f_{\Delta_\bbX+1}+\cdots+\beta_{s-\Delta_\bbX\; j}f_{s})
$$
such that $1 \le j \le \Delta_\bbX$ form a $K$-basis
of~$(\fC_\bbX)_{-r_{\bbX}}$.

Now suppose for a contradiction that
$\HF_{\delta_\bbX}(2r_\bbX-1) = s$.
This implies that $(\delta_\bbX)_{2r_\bbX-1}
= R_{2r_\bbX-1}$. In particular, we have
$x_0^{r_\bbX-1}f_1 \in (\delta_\bbX)_{2r_\bbX-1}$.
Using Lemma~\ref{lemSec1.8}, we also have
$$
\begin{aligned}
x_0^{r_\bbX-1}f_1\cdot \widetilde{g}_1
&= x_0^{r_\bbX-1}f_1\cdot x_0^{-2r_{\bbX}}
(f_1+\beta_{11}f_{\Delta_\bbX+1}+\cdots
+\beta_{s-\Delta_\bbX\; 1}f_{s}) \\
&= x_0^{-r_{\bbX}-1}f_1^2 = x_0^{-1}f_1 \in R_{r_\bbX-1}.
\end{aligned}
$$
It follows that $f_1 \in x_0R_{r_\bbX-1}$, a contradiction.
Thus we must have $\HF_{\delta_\bbX}(2r_\bbX-1) < s$,
and hence $\ri(\delta_\bbX) = 2r_{\bbX}$.
\end{proof}

In view of the preceding proposition, for a 0-dimensional
locally Gorenstein scheme $\bbX$ the inclusion
$\bigoplus_{i\ge 2r_\bbX} R_i \subseteq \delta_\bbX^\sigma$
always holds true. When this inclusion becomes an equality,
we use the following name.

\begin{defn}
We say that $\bbX$ has {\bf minimal Dedekind different}
if its Dedekind different satisfies
$\delta_\bbX^\sigma=\bigoplus_{i\ge 2r_\bbX} R_i$.
\end{defn}

Recall that the K\"{a}hler different $\vartheta_\bbX$
of~$\bbX$ is the homogeneous ideal of~$R$ generated
by all $n$-minors of the Jacobian matrix
$\left(\frac{\partial F_{j}}{\partial x_i}\right)_{
\begin{subarray}{l}
i=1,\dots,n\\ j=1,\dots,r
\end{subarray}}$,
where $\{F_1,\dots,F_r\}$ is a homogeneous system of
generators of~$I_\bbX$.
For finite sets of distinct $K$-rational points
in~$\bbP^2_K$ which have minimal Dedekind different,
the Dedekind and K\"{a}hler differents agree,
as the following corollary shows.

\begin{cor}\label{corSec2.8}
Let $\bbX=\{p_1,\dots,p_s\}\subseteq\bbP^2_K$ be
a set of $s$ distinct $K$-rational points.
If $\bbX$ has minimal Dedekind different, then
$\delta_\bbX = \vartheta_\bbX$.
\end{cor}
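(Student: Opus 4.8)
The plan is to derive the equality from a hypothesis-free \emph{sandwich}
$$
\textstyle\bigoplus_{i\ge 2r_\bbX}R_i\ \subseteq\ \vartheta_\bbX\ \subseteq\ \delta_\bbX ,
$$
and to use the hypothesis only at the very end: having minimal Dedekind different means precisely $\delta_\bbX=\bigoplus_{i\ge 2r_\bbX}R_i$, so the two outer terms coincide, both inclusions collapse to equalities, and $\vartheta_\bbX=\delta_\bbX$ follows. Since $\bigoplus_{i\ge 2r_\bbX}R_i=\fm^{2r_\bbX}$, the left inclusion is exactly the statement that $\ri(\vartheta_\bbX)\le 2r_\bbX$ together with $\HP_{\vartheta_\bbX}(z)=\deg(\bbX)=s$.

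For the right inclusion $\vartheta_\bbX\subseteq\delta_\bbX$ I would invoke the classical chain of differents (Kähler inside Noether inside Dedekind) for the finite free extension $K[x_0]\hookrightarrow R$: as $\bbX$ is a reduced set of $K$-rational points it is locally Gorenstein, so $\delta_\bbX$ is defined and this chain (see \cite{Ku2}) gives $\vartheta_\bbX\subseteq\delta_\bbX$. The colength statement $\HP_{\vartheta_\bbX}(z)=s$ I would obtain from generic étaleness: by the isomorphism $Q^h(R)\cong\prod_{j=1}^s\calO_{\bbX,p_j}[T_j,T_j^{-1}]=\prod_{j=1}^s K[T_j,T_j^{-1}]$, each point contributes one $K(x_0)$-rational factor to the generic fibre, so $R\otimes_{K[x_0]}K(x_0)\cong\prod_{j=1}^s K(x_0)$ is étale over $K(x_0)$. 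Hence $\Omega_{R/K[x_0]}$ is a torsion module and $\vartheta_\bbX=\mathrm{Fitt}_0(\Omega_{R/K[x_0]})$ has $0$-dimensional support, whence $\HF_{\vartheta_\bbX}(i)=s$ for $i\gg0$.

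The left inclusion $\fm^{2r_\bbX}\subseteq\vartheta_\bbX$ is where the restriction to $\bbP^2_K$ is essential and where the main difficulty lies. Since it is classical that $I_\bbX$ is generated in degrees $\le r_\bbX+1$, every $2\times2$ minor of the $(X_1,X_2)$-Jacobian has degree $\le 2r_\bbX$, so the natural regularity bound $n\cdot r_\bbX$ for the Kähler different meets the Dedekind scale $2r_\bbX$ precisely when $n=2$. To produce elements of $\vartheta_\bbX$ I would use that $\bbX$ is smooth: by the Jacobian criterion the full $3\times r$ Jacobian has rank $2$ at every $p_j$, and the Euler relations $x_0\,\partial_0F_l=-(x_1\,\partial_1F_l+x_2\,\partial_2F_l)$ show that, after multiplication by $x_0$, the minors involving $\partial_0$ become $x_1$- and $x_2$-multiples of the $(X_1,X_2)$-minors; thus for each $j$ there is an element of $\vartheta_\bbX$ not vanishing at $p_j$. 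Using the separators $f_1,\dots,f_s$ of $\bbX$ (for which $R_{2r_\bbX}$ has $K$-basis $x_0^{r_\bbX}f_1,\dots,x_0^{r_\bbX}f_s$ and $f_j^2=x_0^{r_\bbX}f_j$ by Lemma~\ref{lemSec1.8}), the remaining task is to show that these minor values actually \emph{span} $R_{2r_\bbX}\cong K^s$, i.e. that the evaluation matrix of a suitable family of minors is invertible. This passage from ``nonvanishing at each individual point'' to ``spanning'' — which localization at the separate points does not deliver — is the main obstacle; I would close it either by quoting the sharp bound $\ri(\vartheta_\bbX)\le 2r_\bbX$ for reduced point sets in $\bbP^2_K$ from the theory of Kähler differents, or by the length identity $\dim_K R/\vartheta_\bbX=\dim_K R/\fm^{2r_\bbX}$.

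Granting the sandwich, the hypothesis finishes everything at once: minimality gives $\delta_\bbX=\bigoplus_{i\ge 2r_\bbX}R_i$, so
$$
\textstyle\bigoplus_{i\ge 2r_\bbX}R_i\ \subseteq\ \vartheta_\bbX\ \subseteq\ \delta_\bbX\ =\ \bigoplus_{i\ge 2r_\bbX}R_i ,
$$
and therefore $\vartheta_\bbX=\delta_\bbX$, as claimed.
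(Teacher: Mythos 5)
Your proposal is correct and follows essentially the same route as the paper: the paper establishes the identical sandwich $\bigoplus_{i\ge 2r_\bbX}R_i\subseteq\vartheta_\bbX\subseteq\delta_\bbX$, quoting \cite[Proposition~3.8]{KL} for $\vartheta_\bbX\subseteq\delta_\bbX$ and \cite[Theorem~2.5]{KLL} for the facts $\ri(\vartheta_\bbX)\le nr_\bbX=2r_\bbX$ and $\HF_{\vartheta_\bbX}(i)=s$ for $i\ge 2r_\bbX$ when $n=2$ --- precisely the ``sharp bound'' you propose to cite --- and then lets minimality collapse both inclusions. One caveat: of your two suggested ways to close the left inclusion, only the citation is viable, since the ``length identity'' $\dim_K R/\vartheta_\bbX=\dim_K R/\fm^{2r_\bbX}$ is not an independently available fact but amounts to the conclusion $\vartheta_\bbX=\bigoplus_{i\ge 2r_\bbX}R_i$ itself, so relying on it would be circular.
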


\begin{proof}
By \cite[Proposition~3.8]{KL}, we have
$\vartheta_\bbX\subseteq \delta_\bbX$.
Because $\bbX$ has minimal Dedekind different,
we have $\HP_{\vartheta_\bbX}(2r_\bbX-1) =
\HF_{\delta_\bbX}(2r_\bbX-1) =0$.
Moreover, it follows from \cite[Theorem~2.5]{KLL}
and $n=2$ that $\ri(\vartheta_\bbX)\le nr_\bbX = 2r_\bbX$
and $\HF_{\vartheta_\bbX}(i) = s$ for all $i\ge 2r_\bbX$.
Thus we obtain
$\delta_\bbX = \vartheta_\bbX = \bigoplus_{i\ge 2r_\bbX} R_i$.
\end{proof}

\begin{exam}\label{examSec2.9}
Let $\bbX=\{p_1, \dots, p_6\}\subseteq \bbP^2_{\bbQ}$
be the set of six points given by $p_1=(1:0:0)$,
$p_2=(1:2:0)$, $p_3=(1:2:1)$, $p_4=(1:0:2)$,
$p_5=(1:1:2)$, and $p_6=(1:2:2)$.
We sketch $\bbX$ in the affine plane
$D_+(X_0)=\mathbb{A}^2_\bbQ$ as follows:
$$
{\scriptstyle
\begin{array}{ccccc}
(0,2)& \bullet & \bullet & \bullet & (2,2)\\
     &         &         & \bullet &  \\
(0,0)& \bullet &         & \bullet &  (2,0)
\end{array}}
$$
Then $\bbX$ has the Hilbert function
$\HF_{\bbX}:\ 1\ 3\ 6\ 6\cdots$
and the regularity index $r_{\bbX}=2$.
Moreover, the Dedekind different is given by
$$
\delta_\bbX
= \langle x_{2}^{4}, x_{1}x_{2}^{3}, x_{0}x_{2}^{3},
x_{1}^{4}, x_{0}x_{1}^{3}, x_{0}^{4} \rangle
= {\textstyle\bigoplus\limits_{i\geq 4}}R_i.
$$
Thus the scheme $\bbX$ has minimal Dedekind different,
and Corollary~\ref{corSec2.8} yields that
$\delta_\bbX = \vartheta_\bbX = \bigoplus_{i\ge 4} R_i$.
\end{exam}

Notice that the Dedekind and K\"{a}hler differents do not
always agree, e.g. when $\bbX$ is a non-reduced complete
intersection in~$\bbP^2_K$ (see \cite[Example~3.9]{KL}).
However, for finite sets of distinct points in~$\bbP^2_K$
we propose the following conjecture.

\begin{conjecture}\label{conjS2.9}
Let $\bbX=\{p_1,\dots,p_s\}\subseteq\bbP^2_K$ be
a set of $s$ distinct $K$-rational points.
Then we have $\delta_\bbX = \vartheta_\bbX$.
\end{conjecture}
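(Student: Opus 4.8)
The plan is to prove the conjecture by showing that the two ideals have the same Hilbert function. By \cite[Proposition~3.8]{KL} we already have the containment $\vartheta_\bbX \subseteq \delta_\bbX$, so $\delta_\bbX = \vartheta_\bbX$ is equivalent to $\HF_{\vartheta_\bbX}(i) = \HF_{\delta_\bbX}(i)$ for all $i \in \bbZ$; equivalently, the finitely generated graded $R$-module $\delta_\bbX/\vartheta_\bbX$ is zero. Since $\bbX$ is a reduced set of $K$-rational points, $R_{x_0} \cong L_0^{\,s}$ is étale over $L_0 = K[x_0,x_0^{-1}]$, so both differents become the unit ideal after inverting $x_0$; hence $\delta_\bbX/\vartheta_\bbX$ is annihilated by a power of $x_0$ and has finite length. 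Combined with $\HP_{\delta_\bbX}(z) = s = \HP_{\vartheta_\bbX}(z)$ and the fact (shown above, and in \cite[Theorem~2.5]{KLL}) that $\HF_{\delta_\bbX}(i) = \HF_{\vartheta_\bbX}(i) = s$ for all $i \ge 2r_\bbX$, the problem reduces to proving $\HF_{\vartheta_\bbX}(i) = \HF_{\delta_\bbX}(i)$ in the finitely many degrees $i < 2r_\bbX$.

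For the Dedekind side, I would use the explicit description of the homogeneous components of $\fC_\bbX$ from Proposition~\ref{propSec3.4}, together with the standard sets of separators of Section~2, to read off $\HF_{\delta_\bbX}$ directly from the geometry of $\bbX$. For the Kähler side, the natural tool in $\bbP^2$ is the Hilbert--Burch theorem: the vanishing ideal $I_\bbX$ is the ideal of maximal minors of the presentation matrix $M$ in a resolution $0 \to \bigoplus_i P(-b_i) \xrightarrow{M} \bigoplus_j P(-a_j) \to I_\bbX \to 0$. I would express the columns of the Jacobian matrix of these minors in terms of $M$ and its partial derivatives, and use the Euler relation $x_0 \tfrac{\partial F}{\partial x_0} + x_1\tfrac{\partial F}{\partial x_1} + x_2\tfrac{\partial F}{\partial x_2} = (\deg F)\,F$ to relate the two-rowed Jacobian built from $\partial/\partial x_1, \partial/\partial x_2$ to the full gradient modulo $I_\bbX$. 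The restriction to $\bbP^2$ enters precisely here, through the codimension-two Hilbert--Burch structure and the fact that the Jacobian has only two rows. The goal of this step is a formula, or at least a sharp lower bound, for $\HF_{\vartheta_\bbX}(i)$ in the low degrees.

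Once the inequality $\HF_{\vartheta_\bbX}(i) \ge \HF_{\delta_\bbX}(i)$ is established degree by degree, the containment $\vartheta_\bbX \subseteq \delta_\bbX$ forces equality and the conjecture follows. As a sanity check and a possible base case, Corollary~\ref{corSec2.8} already settles the schemes with minimal Dedekind different, and one may hope to bootstrap from there by induction on the number of points $s$, passing from $\bbX$ to a maximal $p_j$-subscheme $\bbY$ and using the separator machinery of Section~2 to track how $\delta_\bbX$ and $\vartheta_\bbX$ change.

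The hard part will be the Kähler side. Unlike $\HF_{\delta_\bbX}$, which the separator technology of Section~2 controls cleanly, the Hilbert function of $\vartheta_\bbX$ is governed by the global Jacobian ideal, whose behaviour depends delicately on the position of the points: special configurations (many points on a line or a conic, and so on) alter the degrees $a_j, b_i$ in the Hilbert--Burch resolution, and hence the degrees in which the Jacobian minors live, so there is no configuration-independent formula to substitute. The crux is therefore to show that these configuration-dependent contributions always conspire to fill exactly the components of $\delta_\bbX$ --- in particular that no unexpected syzygies among the Jacobian minors make $\vartheta_\bbX$ strictly smaller than $\delta_\bbX$ in some intermediate degree. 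Controlling $\vartheta_\bbX$ uniformly across all point configurations is the obstacle that keeps this statement at the level of a conjecture.
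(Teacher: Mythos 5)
You should be aware that the statement you set out to prove is stated as a \emph{conjecture} in the paper: the authors give no proof of it, so there is no paper proof to compare your attempt against. Your proposal, too, is not a proof --- it is a correct chain of reductions followed by an honest admission that the decisive step is missing, and that missing step is not a technical detail but the entire content of the conjecture. Concretely, your reductions are sound: the containment $\vartheta_\bbX \subseteq \delta_\bbX$ from \cite[Proposition~3.8]{KL}; the observation that both differents become the unit ideal after inverting $x_0$ (for reduced $K$-rational points $R_{x_0}$ is indeed \'etale over $L_0$, and $x_0^{2r_\bbX}\in\delta_\bbX$ by Proposition~\ref{propSec2.5}), so that $\delta_\bbX/\vartheta_\bbX$ has finite length; and the equality $\HF_{\delta_\bbX}(i)=\HF_{\vartheta_\bbX}(i)=s$ for $i\ge 2r_\bbX$, using Proposition~\ref{propSec2.5} and \cite[Theorem~2.5]{KLL} with $n=2$. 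This legitimately reduces the conjecture to the finitely many degrees $i<2r_\bbX$. But for those degrees you offer no argument whatsoever that $\HF_{\vartheta_\bbX}(i)\ge\HF_{\delta_\bbX}(i)$ --- only a list of tools (Hilbert--Burch resolution, Euler relation) and the hope that configuration-dependent contributions ``conspire'' correctly. A plan whose hard part coincides with the open problem is a research program, not a proof.

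For what it is worth, your strategy is aligned with the only evidence the paper supplies: the authors note that the conjecture holds when $\bbX$ is an almost complete intersection, precisely by combining the Hilbert--Burch theorem with Waldi's result \cite[Satz~4]{Wal} on special almost complete intersections, and your intended use of the two-rowed Jacobian explains correctly why the restriction to $\bbP^2_K$ matters --- Example~\ref{examSec2.10} shows the statement fails for nine points in $\bbP^3_{\bbQ}$, where $\vartheta_\bbX=\bigoplus_{i\ge 2r_\bbX+1}R_i\subsetneq\delta_\bbX$. Two cautions on your proposed fallbacks: Corollary~\ref{corSec2.8} is a genuine but thin base case (minimal Dedekind different forces $\HF_{\delta_\bbX}$ to vanish below $2r_\bbX$, a very special situation), and the suggested induction on $s$ via maximal $p_j$-subschemes is itself problematic, since nothing in the paper's separator machinery controls how the K\"ahler different changes when a point is removed; that step would need to be proved and is, as far as the paper indicates, also open.
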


Recall that a 0-dimensional scheme $\bbX\subseteq\bbP^n_K$
is an {\bf almost complete intersection} if
$I_\bbX$ is minimally generated by $n+1$ homogeneous polynomials
in~$P$. The above conjecture holds true when the set
$\bbX$ is an almost complete intersection.
This follows from~\cite[Satz~4]{Wal}, because in this case
the Hilbert-Burch Theorem (cf. \cite[Theorem~24.2]{Pev})
implies that $\bbX$ is also a special almost complete intersection
(see \cite[Definition~1]{Wal}).
Note that Corollary~\ref{corSec2.8} and Conjecture~\ref{conjS2.9}
are not true in~$\bbP^3_K$.

\begin{exam}\label{examSec2.10}
Let $\bbX=\{p_1,\dots,p_9\}\subseteq \bbP^3_\bbQ$
be the set of nine points given by
$p_1=(1:0:0:0)$,
$p_2=(1:1:0:0)$,
$p_3=(1:1:1:0)$,
$p_4= (1:1:-1:1)$,
$p_5=(1:-1:1:1)$,
$p_6=(1:-2:1:0)$,
$p_7=(1:-2:2:0)$,
$p_8=(1:-1:2:1)$,
and $p_9=(1:0:2:0)$.
We have $\HF_{\bbX}:\ 1\ 4\ 9\ 9\cdots$ and $r_\bbX=2$.
In this case the Hilbert functions of the K\"{a}hler
and Dedekind differents are given by
$$
\begin{array}{cc}
  \HF_{\vartheta_\bbX} &:\ 0\ 0\ 0\ 0\ 0\ 9\ 9\ \cdots \\
  \HF_{\delta_\bbX} &:\ 0\ 0\ 0\ 0\ 9\ 9\ 9\ \cdots
\end{array}
$$
It follows that
$\delta_\bbX =\bigoplus_{i\ge 2r_\bbX} R_i$,
and so $\bbX$ has minimal Dedekind different.
However, we have
$\vartheta_\bbX = \bigoplus_{i\ge 2r_\bbX+1} R_i
\subsetneq \delta_\bbX$.
\end{exam}

\bigskip\bigbreak
\section{The Cayley-Bacharach Property}

In this section we relate the algebraic structure
of the Dedekind different to the Cayley-Bacharach property
of a 0-dimensional scheme $\bbX$ in~$\bbP^n_K$.
First we use the notion of the maximal degree of a minimal
separator introduced in Section~1 to define the degree of
a point in $\bbX$.

\begin{defn}
For every $p_j\in\Supp(\bbX)$, the {\bf degree of $p_j$ in $\bbX$}
is defined as
$$
\deg_{\bbX}(p_j) := \min\big\{\, \mu_{\bbY/\bbX} \; \big| \;
\bbY \ \textrm{is a maximal $p_j$-subscheme of $\bbX$} \,\big\}.
$$
\end{defn}

Obviously, we have $\deg_{\bbX}(p_j) \le r_{\bbX}$ for all
$j = 1, \dots, s$. In case all points of $\Supp(\bbX)$ have
degree greater than some natural number $d$, we have the
following notion.

\begin{defn}
Let $d \ge 0$, let $\bbX \subseteq \bbP^n_K$ be a $0$-dimensional
scheme, and let $\Supp(\bbX)=\{p_1, \dots,p_s\}$.
We say that $\bbX$ has the {\bf Cayley-Bacharach property
of degree $d$} (in short, $\bbX$ has CBP($d$))
if every point $p_j\in\Supp(\bbX)$ has degree
$\deg_{\bbX}(p_j) \ge d+1$.
In the case that $\bbX$ has CBP($r_\bbX-1$) we also say that
$\bbX$ is a {\bf Cayley-Bacharach scheme}.
\end{defn}

If $\bbX$ has CBP($d$), then $\bbX$ has CBP($d-1$), and
every $0$-dimensional scheme~$\bbX$ with $\deg(\bbX) \ge 2$
has CBP($0$). Moreover, the number $r_{\bbX}-1$ is the
largest degree $d \ge 0$ such that $\bbX$ can have CBP($d$).
So, it suffices to consider the Cayley-Bacharach property
in degree $d \in \{0, \dots, r_{\bbX}-1\}$.

The following proposition gives a characterization of
Cayley-Bacharach property using standard sets of separators
of~$\bbX$.

\begin{prop}\label{propSec3.3}
Let $\bbX \subseteq \bbP^n_K$ be a $0$-dimensional scheme,
let $0 \le d \le r_{\bbX}-1$,
let $\Supp(\bbX) = \{p_1, \dots, p_s\}$, and let
$\varkappa_j = \dim \kappa(p_j)$.
Then the following statements are equivalent.
\begin{enumerate}
  \item The scheme $\bbX$ has CBP($d$).

  \item If $\bbY \subseteq \bbX$ is a maximal $p_j$-subscheme
  and $\{f_{j1},\dots,f_{j\varkappa_j}\}$  is a standard set
  of separators of~$\bbY$ in~$\bbX$, then there exists
  $k_j\in\{1 \dots, \varkappa_j\}$ such that
  $x_0^{r_{\bbX}-d} \nmid f_{jk_j}$.

  \item For all $p_j \in \Supp(\bbX)$, every maximal
  $p_j$-subscheme $\bbY \subseteq \bbX$ satisfies
   $$
   \dim_K(I_{\bbY/\bbX})_{d} < \varkappa_j.
   $$
\end{enumerate}
\end{prop}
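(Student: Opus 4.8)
The plan is to show that all three statements are simply reformulations of one and the same condition, namely that for every $j \in \{1,\dots,s\}$ and every maximal $p_j$-subscheme $\bbY \subseteq \bbX$ we have $\mu_{\bbY/\bbX} \ge d+1$. By the definitions $\deg_{\bbX}(p_j) = \min_{\bbY}\mu_{\bbY/\bbX}$ and of CBP($d$), requiring $\deg_{\bbX}(p_j)\ge d+1$ for all $j$ is exactly this condition, so it coincides with statement~(a). It therefore suffices to prove that (b) and (c) are each equivalent to it, and I would treat the two equivalences separately.

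The equivalence of~(a) and~(b) rests on the following divisibility lemma, which is the one genuinely non-routine point: for a standard separator $f_{jk_j} = x_0^{r_{\bbX}-\deg(f^*_{jk_j})}f^*_{jk_j}$ one has $x_0^{r_{\bbX}-d} \nmid f_{jk_j}$ in $R$ if and only if $\deg(f^*_{jk_j}) \ge d+1$. The ``if'' direction is immediate by factoring: when $\deg(f^*_{jk_j})\le d$ we have $r_{\bbX}-\deg(f^*_{jk_j})\ge r_{\bbX}-d$, so $f_{jk_j}=x_0^{r_{\bbX}-d}\bigl(x_0^{d-\deg(f^*_{jk_j})}f^*_{jk_j}\bigr)$. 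For the contrapositive of ``only if'' I would suppose $x_0^{r_{\bbX}-d}\mid f_{jk_j}$, write $f_{jk_j}=x_0^{r_{\bbX}-d}g$ with $g\in R_d$, and apply the injection $\tilde{\imath}$. Since multiplication by $x_0$ raises the exponent of $T_j$ by one without altering coefficients (as used in the proof of Proposition~\ref{propSec1.5}(b)) and $\tilde{\imath}(f_{jk_j})=(0,\dots,0,e_{jk_j}s_jT_j^{r_{\bbX}},0,\dots,0)$, I would deduce $\tilde{\imath}(g)=(0,\dots,0,e_{jk_j}s_jT_j^{d},0,\dots,0)$; as $g\in R_d$, minimality in the definition of $\mu$ forces $d\ge\mu(e_{jk_j}s_j)=\deg(f^*_{jk_j})$, a contradiction. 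Because $\mu_{\bbY/\bbX}=\max_{k_j}\deg(f^*_{jk_j})$, the existence of some $k_j$ with $x_0^{r_{\bbX}-d}\nmid f_{jk_j}$ is equivalent to $\mu_{\bbY/\bbX}\ge d+1$; and Remark~\ref{remSec1.4}(a) guarantees this is independent of the chosen socle element and basis, so quantifying over all standard sets of separators causes no difficulty.

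The equivalence of~(a) and~(c) follows by reading off the shape of $\HF_{I_{\bbY/\bbX}}$ from Proposition~\ref{propSec1.5}(b) and its proof. Multiplication by $x_0$ embeds $(I_{\bbY/\bbX})_i$ into $(I_{\bbY/\bbX})_{i+1}$, so $\HF_{I_{\bbY/\bbX}}$ is nondecreasing; by Remark~\ref{remSec1.4}(b) it is bounded above by $\varkappa_j$ and equals $\varkappa_j$ for $i\gg0$; and by Proposition~\ref{propSec1.5}(b), $\mu_{\bbY/\bbX}$ is precisely the least index at which the value $\varkappa_j$ is attained. Hence $\dim_K(I_{\bbY/\bbX})_d < \varkappa_j$ holds if and only if $d < \mu_{\bbY/\bbX}$, i.e.\ $\mu_{\bbY/\bbX}\ge d+1$. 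Quantifying over all $p_j$ and all maximal $p_j$-subschemes then identifies~(c) with the central condition.

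I expect the divisibility lemma in the (a)$\Leftrightarrow$(b) step to be the only real obstacle, since it is the sole place where the concrete structure of $\tilde{\imath}$ and the minimality defining $\mu$ must be exploited; by contrast, both the reduction of~(a) to $\mu_{\bbY/\bbX}\ge d+1$ and the Hilbert-function analysis for~(c) are direct consequences of the results of Section~2.
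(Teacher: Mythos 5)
Your proposal is correct and takes essentially the same approach as the paper's proof: both reduce (a), (b), (c) to the single condition $\mu_{\bbY/\bbX}\ge d+1$ for every maximal $p_j$-subscheme $\bbY$, handling (a)$\Leftrightarrow$(b) via the fact that a minimal separator is not divisible by $x_0$ (your divisibility lemma, which the paper compresses into the observation $f^*_{jk_j}\in R_{\deg(f^*_{jk_j})}\setminus x_0R_{\deg(f^*_{jk_j})-1}$), and (a)$\Leftrightarrow$(c) via the Hilbert-function description of $I_{\bbY/\bbX}$ from Proposition~\ref{propSec1.5}(b). One cosmetic remark: the two directions of your divisibility lemma are labelled in swapped order (what you call the ``if'' direction is the ``only if'' direction and vice versa), but both implications are in fact proved, so the argument is complete.
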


\begin{proof}
Let $\bbY$ be a maximal $p_j$-subscheme of $\bbX$
and $\{f_{j1},\dots,f_{j\varkappa_j}\}$ a standard set of
separators of~$\bbY$ in~$\bbX$.
If we write $f_{jk_j}=x_0^{r_{\bbX}-\deg(f^*_{jk_j})}f^*_{jk_j}$
with $f^*_{jk_j} \in R_{\deg(f^*_{jk_j})}\setminus
x_0R_{\deg(f^*_{jk_j})-1}$ for $k_j = 1, \dots, \varkappa_j$,
then the set $\{f^*_{j1}, \dots, f^*_{j\varkappa_j}\}$ is a set
of minimal separators of~$\bbY$ in~$\bbX$.
Hence the equivalence of (a) and (b) follows.

Now we prove the equivalence of (a) and (c).
We always have $\dim_K(I_{\bbY/\bbX})_{i} \le \varkappa_j$
for $i \ge 0$. Moreover, we see that
$\dim_K(I_{\bbY/\bbX})_{d} = \varkappa_j$
if and only if $\deg(f^*_{jk_j}) \le d$ for all
$k_j=1, \dots,\varkappa_j$. This is equivalent to
$\deg_\bbX(p_j)\le d$. Thus the claim follows.
\end{proof}

Let us apply the proposition to a concrete case.

\begin{exam}\label{examSec3.4}
Let $\bbX \subseteq \bbP^2_\bbQ$ be the 0-dimensional scheme
of degree $8$ with support $\Supp(\bbX)=\{p_1,\dots,p_6\}$,
where $p_1=(1:0:0)$, $p_2=(1:1:0)$, $p_3=(1:0:1)$,
$p_4=(1:1:1)$, $p_5$ corresponds to
$\fP_5 =\langle X_1^2+3X_0^2, X_2\rangle$,
and $p_6$ corresponds to
$\fP_6 =\langle X_1-2X_0, 2X_0^2+X_2^2\rangle$.
We have $\varkappa_1=\cdots = \varkappa_4=1$ and
$\varkappa_5=\varkappa_6=2$.
The Hilbert functions of $\bbX$ and its subschemes are
$$
\begin{array}{ll}
  \HF_\bbX: & 1\ 3\ 6\ 8\ 8\cdots \\
  \HF_{\bbX\setminus\{p_j\}}: & 1\ 3\ 6\ 7\ 7\cdots (j=1,\dots,4)\\
  \HF_{\bbX\setminus\{p_5\}}: & 1\ 3\ 6\ 6\cdots \\
  \HF_{\bbX\setminus\{p_6\}}: & 1\ 3\ 5\ 6\ 6 \cdots.
\end{array}
$$
We see that
$\dim_K (I_{\bbX\setminus\{p_j\}/\bbX})_{r_{\bbX}-1}
=\dim_K (I_{\bbX\setminus\{p_j\}/\bbX})_{2} = 0
< \varkappa_j$ for $j=1,\dots,5$ and
$\dim_K (I_{\bbX\setminus\{p_6\}/\bbX})_{r_{\bbX}-1}
= 1< 2= \varkappa_6$.
Consequently, the scheme $\bbX$ is a Cayley-Bacharach
scheme by~Proposition~\ref{propSec3.3}.

Next we consider the subscheme $\bbY = \bbX\setminus\{p_4\}$ of~$\bbX$.
We have $\HF_{\bbY}: \ 1\ 3\ 6\ 7\ 7\cdots$ and $r_\bbY=3$.
The Hilbert functions of subschemes of~$\bbY$ are given by
$$
\begin{array}{ll}
  \HF_{\bbY\setminus\{p_j\}}: & 1\ 3\ 6\ 6\cdots (j=1,2)\\
  \HF_{\bbY\setminus\{p_3\}}: & 1\ 3\ 5\ 6\ 6\cdots \\
  \HF_{\bbY\setminus\{p_5\}}: & 1\ 3\ 5\ 5\cdots \\
  \HF_{\bbY\setminus\{p_6\}}: & 1\ 3\ 4\ 5\cdots.
\end{array}
$$
It follows that $\bbY$ has CBP($d$) for $d=0,1$.
But $\dim_K (I_{\bbY\setminus\{p_3\}/\bbY})_{r_{\bbY}-1}=1=\nu_3$
and $\dim_K (I_{\bbY\setminus\{p_6\}/\bbY})_{r_{\bbY}-1}=2=\nu_6$.
Therefore Proposition~\ref{propSec3.3} yields that
the scheme $\bbY$ is not a Cayley-Bacharach scheme.
\end{exam}

At this point we are ready to characterize Cayley-Bacharach
schemes in terms of their Dedekind differents.

\begin{thm}\label{thmSec3.5}
Let $\bbX\subseteq \bbP^n_K$ be a $0$-dimensional locally
Gorenstein scheme and let $\sigma$ be a homogeneous trace map
of degree zero of $Q^h(R)/L_0$.
Then $\bbX$ is a Cayley-Bacharach scheme if and only if,
for all $p_j\in\Supp(\bbX)$, every maximal $p_j$-subscheme
$\bbY \subseteq \bbX$ satisfies
$$
x_0^{r_{\bbX}-1}(I_{\bbY/\bbX})_{r_{\bbX}}
\nsubseteq (\delta^\sigma_\bbX)_{2r_{\bbX}-1}.
$$
\end{thm}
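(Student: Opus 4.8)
The plan is to reduce the theorem to a statement about a single maximal $p_j$-subscheme and then to compute inside $Q^h(R)$. By Proposition~\ref{propSec3.3} applied with $d=r_\bbX-1$, together with Proposition~\ref{propSec1.5}(b), the scheme $\bbX$ is a Cayley-Bacharach scheme if and only if $\mu_{\bbY/\bbX}=r_\bbX$ for every maximal $p_j$-subscheme $\bbY$. Hence it suffices to prove, for each fixed $\bbY$ (with corresponding socle element $s_j$ and a standard set of separators $\{f_{j1},\dots,f_{j\varkappa_j}\}$ spanning $(I_{\bbY/\bbX})_{r_\bbX}$), the equivalence
$$
x_0^{r_\bbX-1}(I_{\bbY/\bbX})_{r_\bbX}\subseteq(\delta^\sigma_\bbX)_{2r_\bbX-1}\iff\mu_{\bbY/\bbX}\le r_\bbX-1 .
$$
The first simplification is a degree count: since $\HF_{\fC^\sigma_\bbX}(i)=\deg(\bbX)-\HF_\bbX(-i-1)$ forces $(\fC^\sigma_\bbX)_i=0$ for $i<-r_\bbX$, and since $R_i\cong Q^h(R)_i$ for $i\ge r_\bbX$, a product $x_0^{r_\bbX-1}f_{jk_j}\cdot c$ automatically lies in $R$ whenever $c$ is homogeneous of degree $>-r_\bbX$. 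Thus $x_0^{r_\bbX-1}f_{jk_j}\in\delta^\sigma_\bbX$ if and only if $x_0^{r_\bbX-1}f_{jk_j}\cdot c\in\tilde{\imath}(R_{r_\bbX-1})$ for every $c\in(\fC^\sigma_\bbX)_{-r_\bbX}$, so the whole inclusion is equivalent to this holding for all $k_j$ and all such $c$.

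Next I would make the products explicit. Writing the $p_j$-component of $c\in(\fC^\sigma_\bbX)_{-r_\bbX}$ as $w_cT_j^{-r_\bbX}$ via Proposition~\ref{propSec3.4}, and using $\tilde{\imath}(f_{jk_j})=(0,\dots,0,e_{jk_j}s_jT_j^{r_\bbX},0,\dots,0)$, the product $x_0^{r_\bbX-1}f_{jk_j}\cdot c$ is supported at $p_j$ with value $\overline{w_c}\,e_{jk_j}s_j\,T_j^{r_\bbX-1}$, where I use that $s_j$ is a socle element so that only the residue $\overline{w_c}\in\kappa(p_j)$ enters. Setting $V:=\{\,u\in\calO_{\bbX,p_j}\mid(0,\dots,0,uT_j^{r_\bbX-1},0,\dots,0)\in\tilde{\imath}(R)\,\}$, membership in $R$ reduces to $\overline{w_c}\,e_{jk_j}s_j\in V$. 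Because $\bbX$ is locally Gorenstein, $\fG(\calO_{\bbX,p_j})$ is one-dimensional over $\kappa(p_j)$, and $\mu_{\bbY/\bbX}$ is exactly the least index $i$ with $\fG(\calO_{\bbX,p_j})\subseteq V_i$; in particular $\mu_{\bbY/\bbX}\le r_\bbX-1$ means $\fG(\calO_{\bbX,p_j})\subseteq V$. This yields the easy implication at once: if $\mu_{\bbY/\bbX}\le r_\bbX-1$, then every socle value $\overline{w_c}\,e_{jk_j}s_j$ lies in $\fG(\calO_{\bbX,p_j})\subseteq V$, so the inclusion holds.

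For the converse I would argue contrapositively: assuming $\mu_{\bbY/\bbX}=r_\bbX$, I produce a single $c$ and index $k_j$ with $\overline{w_c}\,e_{jk_j}s_j\notin V$. The key step is to find $c\in(\fC^\sigma_\bbX)_{-r_\bbX}$ with $\overline{w_c}\ne0$. Choosing $k_0$ with $\mu(e_{jk_0}s_j)=r_\bbX$, the pure-$p_j$ socle element $\eta=(0,\dots,0,e_{jk_0}s_jT_j^{r_\bbX},0,\dots,0)$ lies outside $\tilde{\imath}(x_0R_{r_\bbX-1})$; since the trace form attached to $\sigma$ is nondegenerate (Proposition~\ref{propSec2.1}) and, by $L_0$-linearity of $\sigma$, one identifies $(\fC^\sigma_\bbX)_{-r_\bbX}$ with the orthogonal complement of $\tilde{\imath}(x_0R_{r_\bbX-1})$ inside $Q^h(R)_{-r_\bbX}$, there is $c\in(\fC^\sigma_\bbX)_{-r_\bbX}$ with $\sigma(c\eta)\ne0$, which forces $\overline{w_c}\ne0$. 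Finally, since $\overline{w_c}$ is then a unit of the field $\kappa(p_j)$, multiplication by it is a $K$-linear automorphism, so $\{\overline{w_c}\,e_{jk_j}s_j\}_{k_j}$ is again a $K$-basis of the one-dimensional socle $\fG(\calO_{\bbX,p_j})$; as $\fG(\calO_{\bbX,p_j})\not\subseteq V$, this basis cannot lie entirely in $V$, and any $k_j$ with $\overline{w_c}\,e_{jk_j}s_j\notin V$ gives $x_0^{r_\bbX-1}f_{jk_j}\cdot c\notin R$, hence the non-inclusion. I expect this last direction to be the main obstacle: one cannot simply feed in an arbitrary unit component, because $V$ need not be stable under multiplication by $\kappa(p_j)$, so the argument must combine the nondegeneracy of $\sigma$ (to secure $\overline{w_c}\ne0$ at all) with the elementary fact that a full $K$-basis of the socle cannot be absorbed into the proper subspace $V\cap\fG(\calO_{\bbX,p_j})$.
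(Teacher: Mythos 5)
Your proof is correct, and it takes a genuinely different route from the paper's --- in fact it establishes more than the theorem asserts. The paper proves only the global equivalence: for ``Cayley-Bacharach $\Rightarrow$ non-inclusion'' it invokes \cite[Proposition~3.2]{KL}, which converts the \emph{global} Cayley-Bacharach hypothesis into elements $g_j^*=x_0^{-2r_{\bbX}}\widetilde{g}_j^*\in(\fC^\sigma_\bbX)_{-r_{\bbX}}$ with unit germ at $p_j$, and then shows that an assumed inclusion would force $x_0\mid f_{jk_j}$, contradicting Proposition~\ref{propSec3.3}; for the converse it multiplies $x_0^{r_{\bbX}-1}f_{jk_j}$ against the $K[x_0]$-generators of $\fC^\sigma_\bbX$ from Remark~\ref{remSec2.3}, using Lemma~\ref{lemSec1.8}. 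You instead prove a per-subscheme equivalence (the inclusion holds for a fixed $\bbY$ if and only if $\mu_{\bbY/\bbX}\le r_{\bbX}-1$), and your hard half manufactures the unit-germ element \emph{locally}: $(\fC^\sigma_\bbX)_{-r_{\bbX}}$ is the orthogonal complement of $\tilde{\imath}(x_0R_{r_{\bbX}-1})$ under the perfect pairing $(c,q)\mapsto\sigma(cq)$ on $Q^h(R)_{-r_{\bbX}}\times Q^h(R)_{r_{\bbX}}$ (Proposition~\ref{propSec2.1}(b)), the hypothesis $\mu_{\bbY/\bbX}=r_{\bbX}$ places your $\eta$ outside $\tilde{\imath}(x_0R_{r_{\bbX}-1})$, so double orthogonality yields $c$ with $\sigma(c\eta)\ne 0$, and since $c\eta$ is a socle multiple supported at $p_j$ this forces $w_c\notin\fm_{\bbX,p_j}$. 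What each approach buys: the paper's proof is shorter given the cited machinery, whereas yours is self-contained, pointwise (it says something about each individual maximal subscheme even when $\bbX$ is not Cayley-Bacharach, which is strictly stronger than the theorem), and your reduction to the single degree $-r_{\bbX}$ of $\fC^\sigma_\bbX$ replaces the paper's generator-by-generator computation. Two steps should be written out in a final version, though both are routine: (i) the identification $(\fC^\sigma_\bbX)_{-r_{\bbX}}=\tilde{\imath}(x_0R_{r_{\bbX}-1})^{\perp}$ needs the observation that the conditions $\sigma\big(c\,\tilde{\imath}(R_i)\big)=0$ for $i<r_{\bbX}-1$ follow from the one for $i=r_{\bbX}-1$ upon multiplication by powers of $x_0$; (ii) your description of $\mu_{\bbY/\bbX}$ as the least $i$ with $\fG(\calO_{\bbX,p_j})\subseteq V_i$ uses the bound $\mu(as_j)\le\max_k\mu(e_{jk}s_j)$, i.e.\ that $V_i$ is a $K$-subspace containing the $K$-basis $\{e_{jk}s_j\}$ of the socle --- the same linearity argument the paper carries out inside the proof of Theorem~\ref{thmSec4.4}.
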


\begin{proof}
Suppose that $\bbX$ is a Cayley-Bacharach scheme.
By \cite[Proposition~3.2]{KL}, for every $j\in\{1,\dots,s\}$,
we find an element
$g^*_j \in (\fC^{\sigma}_{\bbX})_{-r_{\bbX}}$ such that
$g^*_j = x_0^{-2r_{\bbX}}\widetilde{g}^*_j$ with
$\widetilde{g}^*_j \in R_{r_{\bbX}} \setminus \{0\}$ and
$(\widetilde{g}^*_j)_{p_j} \in \calO_{\bbX,p_j}
\setminus \fm_{\bbX,p_j}$.
We assume for a contradiction that there is a maximal
$p_j$-subscheme $\bbY_j\subseteq\bbX$ such that
$$
x_0^{r_{\bbX}-1}(I_{\bbY_j/\bbX})_{r_{\bbX}}
\subseteq (\delta^\sigma_\bbX)_{2r_{\bbX}-1}.
$$
For such $j$, let $s_j$ be the socle element
in~$\mathcal{O}_{\bbX,p_j}$ corresponding to the scheme $\bbY_j$,
let $\{e_{j1},\dots, e_{j\varkappa_j}\}\subseteq
\mathcal{O}_{\bbX,p_j}$ be elements whose residue classes
form a $K$-basis of $\kappa(p_j)$, and let
$\{f_{j1},\dots, f_{j\varkappa_j}\}$
be the standard set of separators of $\bbY_j$ in~$\bbX$
w.r.t.~$s_j$ and $\{e_{j1},\dots, e_{j\varkappa_j}\}$.
We want to show that $x_0 \mid f_{jk}$ for
$k=1,\dots,\varkappa_j$. It suffices to show $x_0 \mid f_{j1}$,
since the other cases follow similarly.
We write
$$
\tilde{\imath}(f_{j1}) =
(0, \dots,0,e_{j1}s_jT_j^{r_{\bbX}},0, \dots,0)
$$
and put
$$
f := \tilde{\imath}^{-1}((0, \dots, 0,
e_{j1}(\widetilde{g}^*_j)^{-1}_{p_j}
s_jT_j^{r_{\bbX}}, 0, \dots, 0)).
$$
Then $0 \ne x_0^{r_{\bbX}-1}f \in
x_0^{r_{\bbX}-1}(I_{\bbY/\bbX})_{r_{\bbX}}$
and $f\widetilde{g}^*_j=x_0^{r_{\bbX}}f_{j1}$,
especially, $x_0^{r_{\bbX}-1}f \in
(\delta^\sigma_\bbX)_{2r_{\bbX}-1}$.
Also, we observe that
$$
x_0^{r_{\bbX}-1}f\cdot g^*_j
= x_0^{r_{\bbX}-1}f\cdot(x_0^{-2r_{\bbX}}\widetilde{g}^*_j)
=  x_0^{-r_{\bbX}-1}f\widetilde{g}^*_j
= x_0^{-r_{\bbX}-1+r_{\bbX}}f_{j1}
=  x_0^{-1}f_{j1}.
$$
So, it follows from the inclusion
$\fC^{\sigma}_{\bbX}\cdot\delta^{\sigma}_{\bbX} \subseteq R$
that $x_0^{-1}f_{j1} \in R_{r_{\bbX}-1}\setminus\{0\}$.
This implies $f_{j1} \in x_0R_{r_{\bbX}-1}$ or $x_0 \mid f_{j1}$.
Therefore Proposition~\ref{propSec3.3} yields that
$\bbX$ is not a Cayley-Bacharach scheme, a contradiction.

Conversely, suppose that $\bbX$ is not
a Cayley-Bacharach scheme.
Then there is a maximal $p_j$-subscheme
$\bbY_j\subseteq\bbX$ such that $\deg(f^*_{jk_j})\leq r_{\bbX}-1$
for all $k_j=1, \dots, \varkappa_j$. Notice that
$f_{jk_j} = x_0^{r_{\bbX}-\deg(f^*_{jk_j})}f^*_{jk_j}$ in
$x_0^{r_{\bbX}-\deg(f^*_{jk_j})}R_{\deg(f^*_{jk_j})}$
for all $k_j=1, \dots, \varkappa_j$.
As in Remark~\ref{remSec2.3}, we may write
$\fC^{\sigma}_{\bbX} =
\langle g_1,\dots,g_{\deg(\bbX)} \rangle_{K[x_0]}$,
where $g_k = x_0^{-2r_{\bbX}}\widetilde{g}_k$ with
$\widetilde{g}_k \in R_{2r_{\bbX}-n_k}$ for
$k = 1, \dots, \deg(\bbX)$ and $n_k\le r_\bbX$.
By~Lemma~\ref{lemSec1.8}, there are
$c_{j1}, \dots, c_{j\varkappa_j} \in K$ such that
$f_{j1}\cdot\widetilde{g}_k=\sum_{k_j=1}^{\varkappa_j}
c_{jk_j}x_0^{2r_{\bbX}-n_k}f_{jk_j}$.
We calculate
$$
\begin{aligned}
x_0^{r_{\bbX}-1}f_{j1}\cdot g_k
&= x_0^{r_{\bbX}-1}f_{j1}\cdot(x_0^{-2r_{\bbX}}\widetilde{g}_k)
=  x_0^{-r_{\bbX}-1}f_{j1}\widetilde{g}_k \\
&= x_0^{r_{\bbX}-n_k-1}
{\textstyle\sum\limits_{k_j=1}^{\varkappa_j}}c_{jk_j}f_{jk_j}\\
&= x_0^{r_{\bbX}-n_k}{\textstyle\sum\limits_{k_j=1}^{\varkappa_j}}
   c_{jk_j}x_0^{r_{\bbX} - \deg(f^*_{jk_j})-1}f^*_{jk_j}
   \in R_{2r_{\bbX}-n_k-1}.
\end{aligned}
$$
This implies $x_0^{r_{\bbX}-1}f_{j1} g_k \in R_{2r_{\bbX}-n_k-1}$
for every $k \in \{1, \dots, \deg(\bbX)\}$. Hence the element
$x_0^{r_{\bbX}-1}f_{j1}$ is contained in
$(\delta^{\sigma}_{\bbX})_{2r_{\bbX}-1}$.
Similarly, we can show that $x_0^{r_{\bbX}-1}f_{jk_j}$
is a homogeneous element of degree $2r_{\bbX}-1$
of~$\delta^{\sigma}_{\bbX}$ for all $k_j=2, \dots, \varkappa_j$.
Therefore we obtain
$$
x_0^{r_{\bbX}-1}(I_{\bbY_j/\bbX})_{r_{\bbX}}
= \langle x_0^{r_{\bbX}-1}f_{j1}, \dots,
x_0^{r_{\bbX}-1}f_{j\varkappa_j} \rangle_K
\subseteq (\delta^{\sigma}_{\bbX})_{2r_{\bbX}-1},
$$
in contradiction to the assumption that
$x_0^{r_{\bbX}-1}(I_{\bbY_j/\bbX})_{r_{\bbX}}\nsubseteq
(\delta^{\sigma}_{\bbX})_{2r_{\bbX}-1}$.
\end{proof}

The following corollary is an immediate consequence
of~Theorem~\ref{thmSec3.5}.

\begin{cor}\label{corSec3.6}
Let $\bbX\subseteq \bbP^n_K$ be a $0$-dimensional locally
Gorenstein scheme.
\begin{enumerate}
\item If $\bbX$ has $K$-rational support then
  it is a Cayley-Bacharach scheme if and only if
  for every subscheme $\bbY\subseteq\bbX$ of degree
  $\deg(\bbY) = \deg(\bbX)-1$ and for every separator
  $f_{\bbY}$ of~$\bbY$ in~$\bbX$ we have
  $x_0^{r_{\bbX}-1}f_{\bbY} \notin
  (\delta^{\sigma}_{\bbX})_{2r_{\bbX}-1}$.

\item If $\bbX$ has minimal Dedekind different
then it is a Cayley-Bacharach scheme.
\end{enumerate}
\end{cor}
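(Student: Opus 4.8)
The plan is to derive both statements directly from Theorem~\ref{thmSec3.5} by rewriting the non-containment $x_0^{r_{\bbX}-1}(I_{\bbY/\bbX})_{r_{\bbX}} \nsubseteq (\delta^\sigma_\bbX)_{2r_{\bbX}-1}$ in each of the two special situations.

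For part (a), I would first use that $K$-rational support forces $\varkappa_j = \dim_K\kappa(p_j)=1$ for all $j$. By the observation following the definition of maximal $p_j$-subscheme, the maximal $p_j$-subschemes of $\bbX$ (as $j$ ranges over $\{1,\dots,s\}$) are then exactly the subschemes $\bbY\subseteq\bbX$ with $\deg(\bbY)=\deg(\bbX)-1$, so quantifying over all maximal $p_j$-subschemes is the same as quantifying over all such $\bbY$. Next, since $\varkappa_j=1$ and $\mu_{\bbY/\bbX}\le r_{\bbX}$, Proposition~\ref{propSec1.5}(b) gives $\dim_K(I_{\bbY/\bbX})_{r_{\bbX}}=1$; by Remark~\ref{remSec1.4}(c) this one-dimensional space is spanned by a separator $f_{\bbY}$ of~$\bbY$ in~$\bbX$. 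Because $(\delta^\sigma_\bbX)_{2r_{\bbX}-1}$ is a $K$-subspace of $R_{2r_{\bbX}-1}$ and $x_0^{r_{\bbX}-1}(I_{\bbY/\bbX})_{r_{\bbX}}=\langle x_0^{r_{\bbX}-1}f_{\bbY}\rangle_K$, the non-containment is equivalent to $x_0^{r_{\bbX}-1}f_{\bbY}\notin(\delta^\sigma_\bbX)_{2r_{\bbX}-1}$; moreover this membership is unchanged under scaling $f_{\bbY}$ by a nonzero constant, so the condition is insensitive to the choice of separator. Feeding this into Theorem~\ref{thmSec3.5} yields the claimed equivalence.

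For part (b), I would simply note that minimal Dedekind different means $\delta^\sigma_\bbX=\bigoplus_{i\ge 2r_{\bbX}}R_i$, whence $(\delta^\sigma_\bbX)_{2r_{\bbX}-1}=\langle 0\rangle$. For an arbitrary maximal $p_j$-subscheme $\bbY$ we have $\dim_K(I_{\bbY/\bbX})_{r_{\bbX}}=\varkappa_j\ge 1$, so $(I_{\bbY/\bbX})_{r_{\bbX}}\ne\langle 0\rangle$; since $x_0$ is a non-zerodivisor of~$R$, multiplication by $x_0^{r_{\bbX}-1}$ is injective and hence $x_0^{r_{\bbX}-1}(I_{\bbY/\bbX})_{r_{\bbX}}\ne\langle 0\rangle=(\delta^\sigma_\bbX)_{2r_{\bbX}-1}$. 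Thus the non-containment of Theorem~\ref{thmSec3.5} holds for every maximal $p_j$-subscheme, and $\bbX$ is a Cayley-Bacharach scheme.

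Since both parts are direct specializations of Theorem~\ref{thmSec3.5}, there is no substantial obstacle; the only care needed is bookkeeping of the definitions. The point most worth checking carefully is the reduction in part (a) to a single separator: one must confirm both that $(I_{\bbY/\bbX})_{r_{\bbX}}$ is exactly one-dimensional and that the non-membership statement is independent of the chosen scalar multiple, so that the theorem's set-theoretic non-containment matches the corollary's element-wise formulation.
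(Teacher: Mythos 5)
Your proposal is correct and follows the same route as the paper: the paper states this corollary as an immediate consequence of Theorem~\ref{thmSec3.5} without further argument, and your write-up simply spells out the routine specializations (that $K$-rational support makes maximal $p_j$-subschemes exactly the degree-$(\deg(\bbX)-1)$ subschemes with $(I_{\bbY/\bbX})_{r_{\bbX}}$ one-dimensional and spanned by a separator, and that minimal Dedekind different forces $(\delta^{\sigma}_{\bbX})_{2r_{\bbX}-1}=\langle 0\rangle$ while $x_0^{r_{\bbX}-1}(I_{\bbY/\bbX})_{r_{\bbX}}\ne\langle 0\rangle$ since $x_0$ is a non-zerodivisor). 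Both reductions are exactly what the paper's ``immediate consequence'' implicitly relies on, so there is nothing to flag.
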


Let us apply the corollary to some explicit cases.

\begin{exam} \label{examSec3.7}
Let $\bbX=\{p_1, \dots, p_6\}\subseteq \bbP^2_{\bbQ}$ be
the set of six points given in Example~\ref{examSec2.9}.
We know that $\bbX$ has minimal Dedekind different.
Therefore Corollary~\ref{corSec3.6}(b) yields that
$\bbX$ is a Cayley-Bacharach scheme.
Similarly, the set of nine points in $\bbP^3_\bbQ$ given in
Example~\ref{examSec2.9} is also a Cayley-Bacharach scheme.

Next we consider the 0-dimensional scheme
$\bbY \subseteq \bbP^2_\bbQ$ of degree $6$ with support $\Supp(\bbY)=\{p_1,\dots,p_5\}$,
where $p_1=(1:0:0)$, $p_2=(1:1:0)$, $p_3=(1:0:1)$,
$p_4=(1:1:1)$, and $p_5$ corresponds to
$\fP_5 =\langle X_1-2X_0, 2X_0^2+X_2^2\rangle$.
The Hilbert function of $\bbY$ is
$\HF_\bbY:\ 1\ 3\ 6\ 6\ \cdots$ and $r_\bbY=2$.
In this case the Hilbert function of the Dedekind different
is given by
$$
\HF_{\delta_\bbY^\sigma}:\ 0\ 0\ 0\ 0\ 6\ 6\ \cdots.
$$
It follows that $\bbY$ has minimal Dedekind different,
and so it is a Cayley-Bacharach scheme
by Corollary~\ref{corSec3.6}(b).
\end{exam}

For a Cayley-Bacharach scheme $\bbX\subseteq\bbP^n_K$,
the Hilbert function of the Dedekind different is described
in our next proposition.

\begin{prop}\label{propSec3.8}
Let $\bbX \subseteq \bbP^n_K$ be a 0-dimensional locally
Gorenstein Cayley-Bacharach scheme and let $\sigma$ be
a homogeneous trace map of degree zero of $Q^h(R)/\!L_0$.
Then the Hilbert function of~$\delta_\bbX^\sigma$
satisfies $\HF_{\delta_\bbX^\sigma}(i)=0$ for $i < r_{\bbX}$,
$\HF_{\delta_\bbX^\sigma}(i)=\deg(\bbX)$ for $i \ge 2r_{\bbX}$
and
$$
0 \le \HF_{\delta_\bbX^\sigma}(r_{\bbX})\le\cdots
\le \HF_{\delta_\bbX^\sigma}(2r_{\bbX}-1)
< \HF_{\delta_\bbX^\sigma}(2r_{\bbX})=\deg(\bbX).
$$
In this case, the regularity index of~$\delta_\bbX^\sigma$
is exactly~$2r_{\bbX}$.
\end{prop}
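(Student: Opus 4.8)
The key observation is that the lower bound on the regularity index, $r_\bbX \le \ri(\delta^\sigma_\bbX)$, was already established in Proposition~\ref{propSec2.5}(c), so to get $\HF_{\delta^\sigma_\bbX}(i)=0$ for $i<r_\bbX$ it suffices to show that $\HF_{\delta^\sigma_\bbX}(r_\bbX-1)=0$, i.e. that the Dedekind different vanishes in one more degree than the general bound guarantees; the Cayley-Bacharach hypothesis is exactly what should upgrade the general lower bound $r_\bbX$ to a vanishing statement. Since the upper bound $\ri(\delta^\sigma_\bbX)\le 2r_\bbX$ and the values $\HF_{\delta^\sigma_\bbX}(i)=\deg(\bbX)$ for $i\ge 2r_\bbX$ come for free from Proposition~\ref{propSec2.5}(b), the whole proposition reduces to two separate claims: (i) $(\delta^\sigma_\bbX)_{r_\bbX-1}=\langle 0\rangle$, and (ii) the strict inequality $\HF_{\delta^\sigma_\bbX}(2r_\bbX-1)<\deg(\bbX)$, which is equivalent to $\ri(\delta^\sigma_\bbX)=2r_\bbX$.

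First I would prove the strict inequality at degree $2r_\bbX-1$. Suppose for a contradiction that $\HF_{\delta^\sigma_\bbX}(2r_\bbX-1)=\deg(\bbX)$, which by Proposition~\ref{propSec2.5}(b) and the monotonicity of $\HF_{\delta^\sigma_\bbX}$ forces $(\delta^\sigma_\bbX)_{2r_\bbX-1}=R_{2r_\bbX-1}$. Then for \emph{any} maximal $p_j$-subscheme $\bbY\subseteq\bbX$ we would have $x_0^{r_\bbX-1}(I_{\bbY/\bbX})_{r_\bbX}\subseteq R_{2r_\bbX-1}=(\delta^\sigma_\bbX)_{2r_\bbX-1}$, contradicting the characterization of Cayley-Bacharach schemes in Theorem~\ref{thmSec3.5}. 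This gives $\HF_{\delta^\sigma_\bbX}(2r_\bbX-1)<\deg(\bbX)$, and since $\HF_{\delta^\sigma_\bbX}(2r_\bbX)=\deg(\bbX)$ we conclude $\ri(\delta^\sigma_\bbX)=2r_\bbX$.

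For the vanishing claim (i), I would work with the containment $\delta^\sigma_\bbX\cdot\fC^\sigma_\bbX\subseteq R$ together with the Hilbert function of $\fC^\sigma_\bbX$ recorded before the definition, namely $\HF_{\fC^\sigma_\bbX}(i)=\deg(\bbX)-\HF_\bbX(-i-1)$. Take a nonzero homogeneous $f\in(\delta^\sigma_\bbX)_{r_\bbX-1}$ and, as in the proof of Theorem~\ref{thmSec3.5}, multiply against the elements $g^*_j\in(\fC^\sigma_\bbX)_{-r_\bbX}$ of the form $g^*_j=x_0^{-2r_\bbX}\widetilde g^*_j$ with $(\widetilde g^*_j)_{p_j}$ a unit in $\calO_{\bbX,p_j}$, which exist for every $j$ by \cite[Proposition~3.2]{KL}. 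The product $f\cdot g^*_j$ lands in $R_{-1}=\langle 0\rangle$, so $f\widetilde g^*_j=0$ in $Q^h(R)$; since $(\widetilde g^*_j)_{p_j}$ is a unit this forces the germ $f_{p_j}=0$ for every $j$, hence $f=0$ via the injection $\tilde\imath$. This yields $(\delta^\sigma_\bbX)_{r_\bbX-1}=\langle 0\rangle$, and combined with Proposition~\ref{propSec2.5}(b) and the monotonicity of the Hilbert function it gives $\HF_{\delta^\sigma_\bbX}(i)=0$ for all $i<r_\bbX$.

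I expect the main obstacle to be the vanishing step (i): the degree bookkeeping of the elements $g^*_j$ and the verification that $f\cdot g^*_j\in R_{-1}$ really forces $f=0$ must be done carefully, and one must be sure that the existence of a single unit-germ generator at each point is enough to annihilate $f$ entirely. The strict-inequality step, by contrast, is essentially a direct contrapositive application of Theorem~\ref{thmSec3.5} and should be routine once the reduction to $(\delta^\sigma_\bbX)_{2r_\bbX-1}=R_{2r_\bbX-1}$ is made explicit.
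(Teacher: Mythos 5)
Your proposal is correct and essentially reproduces the paper's own proof: the vanishing in degrees below $r_\bbX$ is obtained exactly as in the paper by multiplying against the elements $g_j^*=x_0^{-2r_\bbX}\widetilde{g}_j^*\in(\fC_\bbX^\sigma)_{-r_\bbX}$ with unit germs (from \cite[Proposition~3.2]{KL}) and invoking injectivity of $\tilde{\imath}$, and the strict inequality at degree $2r_\bbX-1$ is the same contrapositive application of Theorem~\ref{thmSec3.5}. The only cosmetic differences are the order of the two steps and that you prove vanishing only in degree $r_\bbX-1$ and appeal to monotonicity from Proposition~\ref{propSec2.5}(b), whereas the paper runs the same argument for every $i<r_\bbX$ directly.
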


\begin{proof}
Since the scheme $\bbX$ is a Cayley-Bacharach scheme,
there are homogeneous elements $g_1^*, \dots, g_s^*$
in~$(\fC_{\bbX}^\sigma)_{-r_{\bbX}}$
such that $g^*_j = x_0^{-2r_{\bbX}}\widetilde{g}^*_j$
with $\widetilde{g}^*_j \in R_{r_{\bbX}}$ and
$(\widetilde{g}^*_j)_{p_j} \in
\calO_{\bbX,p_j}\setminus\fm_{\bbX,p_j}$
by \cite[Proposition~3.2]{KL}.
Let $h \in (\delta_\bbX^\sigma)_i$ with $i < r_{\bbX}$.
Then we have
$$
h \cdot g^*_j = x_0^{-2r_{\bbX}} h \widetilde{g}^*_j
 \in R_{i-r_{\bbX}} = \langle 0 \rangle
$$
for $j=1, \dots, s$. This implies $h \widetilde{g}^*_j = 0$,
in particular, $h_{p_j}\cdot (\widetilde{g}^*_j)_{p_j} = 0$
in~$\calO_{\bbX,p_j}$ for all $j \in \{1,\dots,s\}$.
Since $(\widetilde{g}^*_j)_{p_j}$ is a unit
of~$\calO_{\bbX,p_j}$ for $j = 1, \dots, s$,
we have to get $h_{p_j}=0$ for all $j = 1, \dots, s$.
In other words, we have $\tilde{\imath}(h) = 0$,
and so $h=0$ (as $\tilde{\imath}$ is an injection).
Subsequently, we get $\HF_{\delta_\bbX^\sigma}(i)=0$ for
$i < r_{\bbX}$.

Now, according to Proposition~\ref{propSec2.5},
we only need to show that
$\HF_{\delta_\bbX^\sigma}(2r_{\bbX}-1)<\deg(\bbX)$, i.e.,
$(\delta_\bbX^\sigma)_{2r_{\bbX}-1}\subsetneq R_{2r_{\bbX}-1}$.
But this follows from Theorem~\ref{thmSec3.5}, since otherwise
we would have
$x_0^{r_{\bbX}-1}(I_{\bbY/\bbX})_{r_{\bbX}} \subseteq
(\delta_\bbX^\sigma)_{2r_{\bbX}-1}$
for every maximal $p_j$-subscheme $\bbY \subseteq \bbX$,
and thus $\bbX$ would not be a Cayley-Bacharach scheme.
\end{proof}

\begin{rem}\label{remSec3.9}
The upper bound for the regularity index of the Dedekind
different given in Proposition~\ref{propSec2.5} is attained
for 0-dimensional locally Gorenstein Cayley-Bacharach schemes.
Moreover, a 0-dimensional locally Gorenstein Cayley-Bacharach
scheme $\bbX$ satisfies $\HF_{\delta_\bbX^\sigma}(r_{\bbX})>0$
if and only if $\bbX$ is arithmetically Gorenstein
(see \cite[Proposition~4.8]{KL}).
\end{rem}

\begin{prop} \label{propSec3.10}
Let $\bbX \subseteq \bbP^n_K$ be a $0$-dimensional locally
Gorenstein scheme, let  $0 \le d \le r_{\bbX}-1$, and let
$\sigma$ be a homogeneous trace map of degree zero
of~$Q^h(R)/\!L_0$.
If for every $p_j \in \Supp(\bbX)$ the maximal
$p_j$-subscheme $\bbY_j \subseteq \bbX$ satisfies
$$
x_0^{d}(I_{\bbY_j/\bbX})_{r_{\bbX}}
\nsubseteq (\delta_\bbX^\sigma)_{r_{\bbX}+d}
$$
then $\bbX$ has CBP($d$).
In particular, if $\HF_{\delta_\bbX^\sigma}(r_\bbX+d) =0$
then $\bbX$ has CBP($d$).
\end{prop}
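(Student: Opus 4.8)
The plan is to prove the contrapositive, mirroring the ``conversely'' direction of the proof of Theorem~\ref{thmSec3.5}, of which this proposition is a degree-$d$ refinement. So I would assume that $\bbX$ does \emph{not} have CBP($d$) and exhibit a single maximal $p_j$-subscheme violating the displayed non-containment. By Proposition~\ref{propSec3.3}, failure of CBP($d$) means there is a point $p_j\in\Supp(\bbX)$ and a maximal $p_j$-subscheme $\bbY_j\subseteq\bbX$ with $\dim_K(I_{\bbY_j/\bbX})_d=\varkappa_j$, equivalently every minimal separator satisfies $\deg(f^*_{jk_j})\le d$ for $k_j=1,\dots,\varkappa_j$. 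Fixing the associated standard set of separators $\{f_{j1},\dots,f_{j\varkappa_j}\}\subseteq R_{r_\bbX}$, Proposition~\ref{propSec1.5}(b) gives $(I_{\bbY_j/\bbX})_{r_\bbX}=\langle f_{j1},\dots,f_{j\varkappa_j}\rangle_K$, so it suffices to show $x_0^{d}f_{jk_j}\in(\delta^\sigma_\bbX)_{r_\bbX+d}$ for every $k_j$.

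The heart of the argument is the same computation used in Theorem~\ref{thmSec3.5}, now carried out with the exponent $r_\bbX-1$ replaced by $d$. Writing $\fC^\sigma_\bbX=\langle g_1,\dots,g_{\deg(\bbX)}\rangle_{K[x_0]}$ as in Remark~\ref{remSec2.3} with $g_k=x_0^{-2r_\bbX}\widetilde{g}_k$, $\widetilde{g}_k\in R_{2r_\bbX-n_k}$ and $n_k\le r_\bbX$, I would apply Lemma~\ref{lemSec1.8}(a) to write $f_{j1}\cdot\widetilde{g}_k=\sum_{k_j=1}^{\varkappa_j}c_{jk_j}x_0^{2r_\bbX-n_k}f_{jk_j}$ with $c_{jk_j}\in K$, and then substitute $f_{jk_j}=x_0^{r_\bbX-\deg(f^*_{jk_j})}f^*_{jk_j}$. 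This yields
$$
x_0^{d}f_{j1}\cdot g_k
= x_0^{d-2r_\bbX}f_{j1}\widetilde{g}_k
= {\textstyle\sum\limits_{k_j=1}^{\varkappa_j}}
  c_{jk_j}\,x_0^{\,r_\bbX+d-n_k-\deg(f^*_{jk_j})}\,f^*_{jk_j}.
$$
The decisive point is that the exponent $r_\bbX+d-n_k-\deg(f^*_{jk_j})$ is nonnegative: this uses \emph{both} inequalities $n_k\le r_\bbX$ and, crucially, the assumption $\deg(f^*_{jk_j})\le d$ coming from the failure of CBP($d$). Hence $x_0^{d}f_{j1}\cdot g_k$ lies in $R_{r_\bbX+d-n_k}\subseteq R$, and running over all $k$ gives $x_0^{d}f_{j1}\cdot\fC^\sigma_\bbX\subseteq R$, i.e. $x_0^{d}f_{j1}\in(\delta^\sigma_\bbX)_{r_\bbX+d}$. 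The same computation applies verbatim to $f_{j2},\dots,f_{j\varkappa_j}$, so $x_0^{d}(I_{\bbY_j/\bbX})_{r_\bbX}\subseteq(\delta^\sigma_\bbX)_{r_\bbX+d}$, contradicting the hypothesis. This forces CBP($d$).

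For the ``in particular'' clause I would argue directly: if $\HF_{\delta^\sigma_\bbX}(r_\bbX+d)=0$ then $(\delta^\sigma_\bbX)_{r_\bbX+d}=\langle 0\rangle$, whereas for any maximal $p_j$-subscheme $\bbY_j$ the space $x_0^{d}(I_{\bbY_j/\bbX})_{r_\bbX}$ contains the nonzero element $x_0^{d}f_{j1}$ (nonzero since $x_0$ is a non-zerodivisor and $(I_{\bbY_j/\bbX})_{r_\bbX}\ne\langle 0\rangle$ by Proposition~\ref{propSec1.5}(b)). Thus the non-containment hypothesis holds for every $p_j$, and the main statement applies. I do not anticipate a serious obstacle here; the one place that demands care is the exponent bookkeeping in the displayed equation, where one must verify that the power of $x_0$ never goes negative so that the product genuinely lands in $R$ rather than merely in $Q^h(R)$ — and this is exactly where the degree bound $\deg(f^*_{jk_j})\le d$ enters, explaining why the threshold degree $r_\bbX+d$ is the right one.
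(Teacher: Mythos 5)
Your proof is correct and takes essentially the same route as the paper's: both argue by contrapositive, extract from the failure of CBP($d$) a maximal $p_j$-subscheme $\bbY_j$ whose minimal separators all have degree at most $d$, and then run the identical Lemma~\ref{lemSec1.8}(a) computation against the $K[x_0]$-generators of $\fC^\sigma_\bbX$ from Remark~\ref{remSec2.3}, with the same exponent bookkeeping ($n_k\le r_\bbX$ and $\deg(f^*_{jk_j})\le d$) showing that $x_0^{d}(I_{\bbY_j/\bbX})_{r_\bbX}\subseteq(\delta^\sigma_\bbX)_{r_\bbX+d}$; your normalization $g_k=x_0^{-2r_\bbX}\widetilde{g}_k$ with $\widetilde{g}_k\in R_{2r_\bbX-n_k}$ versus the paper's $g_k=x_0^{-r_\bbX-n_k}\widetilde{g}_k$ with $\widetilde{g}_k\in R_{r_\bbX}$ is an immaterial difference. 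Your explicit verification of the ``in particular'' clause, which the paper leaves implicit, is also correct.
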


\begin{proof}
Suppose for contradiction that $\bbX$ does not have CBP($d$).
There are a maximal $p_j$-subscheme $\bbY_j\subseteq \bbX$
and a set of minimal separators
$\{f^*_{j1}, \dots, f^*_{j\varkappa_j}\}$
of~$\bbY_j$ in~$\bbX$ such that $\deg(f^*_{jk_j})\le d$
for $k_j = 1,\dots,\varkappa_j$.
Set $f_{jk_j} := x_0^{r_{\bbX}-\deg(f_{jk_j}^*)}f^*_{jk_j}$
for $k_j \in \{1 \dots, \varkappa_j\}$.
Then the set $\{f_{j1}, \dots, f_{j\varkappa_j}\}$ is a standard set
of separators of~$\bbY_j$ in~$\bbX$. We write
$\fC_\bbX^\sigma=\langle g_1,\dots,g_{\deg(\bbX)} \rangle_{K[x_0]}$,
where $g_k = x_0^{-r_{\bbX}-n_k}\widetilde{g}_k$ with
$\widetilde{g}_k \in R_{r_{\bbX}}$ and $n_k\le r_{\bbX}$ for
$k = 1, \dots, \deg(\bbX)$ (see Remark~\ref{remSec2.3}).
We have
$$
\begin{aligned}
(x_0^{d}f_{jk}) \cdot (x_0^{-r_{\bbX}-n_l}\widetilde{g}_l)
&= x_0^{d-r_{\bbX}-n_l}f_{jk}\widetilde{g}_l
 = x_0^{d-n_l} {\textstyle\sum\limits_{k_j = 1}^{\varkappa_j}}
 c_{jk_j}f_{jk_j} \\
&= x_0^{d-n_l} {\textstyle\sum\limits_{k_j=1}^{\varkappa_j}}
   c_{jk_j} x_0^{r_{\bbX}-\deg(f_{jk_j}^*)} f_{jk_j}^* \\
&= x_0^{r_{\bbX}-n_l}
{\textstyle\sum\limits_{k_j=1}^{\varkappa_j}}
   c_{jk_j} x_0^{d-\deg(f_{jk_j}^*)}f_{jk_j}^*
\end{aligned}
$$
for some $c_{j1}, \dots, c_{j\varkappa_j} \in K$.
Since $r_{\bbX} - n_l \ge 0$ and $d - \deg(f_{jk_j}^*) \ge 0$,
this implies that $(x_0^{d}f_{jk}) \cdot
(x_0^{-r_{\bbX}-n_l}\widetilde{g}_l)\in R_{r_{\bbX}+d-n_l}$
for all $l = 1, \dots, \deg(\bbX)$.
Consequently, the element $x_0^{d}f_{jk}$ is contained
in~$(\delta_\bbX^\sigma)_{r_{\bbX}+d}$ for all
$k = 1, \dots, \varkappa_j$.
Therefore we get the inclusion
$x_0^{d}(I_{\bbY/\bbX})_{r_{\bbX}}
\subseteq (\delta_\bbX^\sigma)_{r_{\bbX}+d}$,
in~contradiction to our assumption.
\end{proof}

The following example shows that the converse
of~Proposition~\ref{propSec3.10} is not true in the
general case (except for the case $d = r_{\bbX}-1$).

\begin{exam}\label{examSec3.11}
Let $\bbX \subseteq \bbP^2_{\bbQ}$ be the set consisting
of the points $p_1=(1:0:0)$, $p_2=(1:1:0)$, $p_3=(1:2:0)$,
$p_4=(1:3:1)$, $p_5=(1:4:0)$, $p_6=(1:5:0)$, $p_7=(1:6:1)$,
and $p_8=(1:1:1)$.
It is easy to see that $\HF_{\bbX}: 1\ 3\ 5\ 7\ 8\ 8\ \cdots$
and $r_{\bbX}=4$. The Dedekind different is computed by
$$
\begin{aligned}
\delta_\bbX = \langle &\,
x_{1}^{2}x_{2}^{2} - \tfrac{20}{3} x_{1}x_{2}^{3} + 9x_{2}^{4},
\, x_{0}x_{1}^{6} - \tfrac{857}{3675} x_{1}^{7}, \\
&\, x_{0}^{5} - \tfrac{393}{100} x_{0}^{3}x_{1}^{2}
+ \tfrac{1431}{400} x_{0}^{2}x_{1}^{3}
- \tfrac{209}{200} x_{0}x_{1}^{4}
+ \tfrac{39}{400} x_{1}^{5}  - \tfrac{3919}{760} x_{2}^{5}
\, \rangle
\end{aligned}
$$
and its Hilbert function is
$\HF_{\delta_\bbX}:\ 0\ 0\ 0\ 0\ 1\ 3\ 5\ 7\ 8\ 8\ \cdots$.
Clearly, $\bbX$ is not arithmetically Gorenstein and
$\HF_{\delta_\bbX}(r_{\bbX}) \ne 0$.
Hence $\bbX$ is not a Cayley-Bacharach scheme
by~Remark~\ref{remSec3.9}.
Also, we can check that $\bbX$ has CBP($d$) for $0 \le d \le 2$.
Now the subscheme $\bbY_4 := \bbX \setminus \{p_4\}$ has
a separator of the form
$f_4 = x_0x_{1}^{2}x_{2} - 7x_0x_{1}x_{2}^{2} + 6x_0x_{2}^{3}$.
It is not difficult to verify that
$x_0^{r_{\bbX}-2}f_4 \in (\delta_\bbX)_{2r_{\bbX}-2}$.
Thus $\bbX$ has CBP($2$), but $x_0^{2}(I_{\bbY_4/\bbX})_{r_{\bbX}}
\subseteq (\delta_\bbX)_{r_{\bbX}+2}$.
\end{exam}

\medskip\bigbreak
\section{Dedekind's Formula}

In previous sections, we mainly considered the Dedekind different
to study the Cayley-Bacharach property of 0-dimensional locally
Gorenstein schemes. This different is a subideal of the conductor
of $R$ in the ring~$\prod_{j=1}^s\calO_{\bbX,p_j}[T_j]$.
In \cite{GKR}, Geramita \emph{et al.} characterized
a finite set of points to be a Cayley-Bacharach scheme
in terms of the conductor and showed that Dedekind's formula
for the conductor and the complementary module
always holds for finite sets of points.
In this section we generalize these results substantially.
We work over an arbitrary base field $K$, and let $\bbX$
be an arbitrary 0-dimensional subscheme of~$\bbP^n_K$.
Let the support of $\bbX$ be given by
$\Supp(\bbX)=\{p_1,\dots, p_s\}$.

\begin{defn}
Let $\widetilde{R} = \prod_{j=1}^s\calO_{\bbX,p_j}[T_j]$, and let
$\mathfrak{F}_{\widetilde{R}/R}$ be the ideal defined as
$$
\mathfrak{F}_{\widetilde{R}/R}
= \{\, f \in \widetilde{R} \, \mid\,
f\widetilde{R} \subseteq R \,\}.
$$
The ideal $\mathfrak{F}_{\widetilde{R}/R}$ is called the
{\bf conductor} of $R$ in $\widetilde{R}$.
\end{defn}

When the scheme $\bbX$ is reduced, the ring $\widetilde{R}$
is the integral closure of~$R$ in its full quotient ring,
and hence $\mathfrak{F}_{\widetilde{R}/R}$ is the conductor
of~$R$ in its integral closure in the traditional sense.
Furthermore, $\mathfrak{F}_{\widetilde{R}/R}$ is an ideal
of both~$R$ and~$\widetilde{R}$.
We recall from \cite[Proposition 2.9]{Kr2} the following
description of the conductor of~$R$ in~$\widetilde{R}$.

\begin{prop}\label{propSec4.2}
For $j \in\{1, \dots, s\}$ and $a\in\calO_{\bbX,p_j}$, let
$\mu(a) = \min\{i\in\bbN \mid (0, \dots, 0, aT_j^{i}, 0, \dots, 0)
\in \tilde{\imath}(R) \}$, where $\tilde{\imath}$
is the injection from $R$ to $Q^h(R)$, and let
$\nu(a) = \max\{\, \mu(ab) \mid  b \in \calO_{\bbX,p_j}
\setminus\{0\} \,\}$. Then, as an ideal of $R$, we have
$$
\mathfrak{F}_{\widetilde{R}/R} =
\big\langle \,  f_a \, \mid \, 1 \le j \le s, \,
a\in \calO_{\bbX,p_j}\setminus\{0\}  \, \big\rangle
$$
where $f_a$ is the preimage of
$(0, \dots, 0, aT_j^{\nu(a)}, 0, \dots, 0)$
under the injection~$\tilde{\imath}$.
\end{prop}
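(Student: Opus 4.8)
The plan is to characterize the conductor $\mathfrak{F}_{\widetilde{R}/R}$ by working locally at each point $p_j$ and translating the defining condition $f\widetilde{R}\subseteq R$ into the language of the injection $\tilde{\imath}:R\hookrightarrow Q^h(R)\cong\prod_{j=1}^s\calO_{\bbX,p_j}[T_j,T_j^{-1}]$. Since $\widetilde{R}=\prod_{j=1}^s\calO_{\bbX,p_j}[T_j]$ decomposes as a product, and since the conductor is an ideal of both $R$ and $\widetilde{R}$, I would first observe that $\mathfrak{F}_{\widetilde{R}/R}$ is generated over $R$ by its homogeneous components, and that every homogeneous element of $\widetilde{R}$ supported at a single point $p_j$ has the form $(0,\dots,0,aT_j^{i},0,\dots,0)$ with $a\in\calO_{\bbX,p_j}$ and $i\ge 0$.

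First I would establish the inclusion $\supseteq$. For a fixed $j$ and $a\in\calO_{\bbX,p_j}\setminus\{0\}$, I claim $f_a\in\mathfrak{F}_{\widetilde{R}/R}$, i.e.\ $f_a\widetilde{R}\subseteq R$. It suffices to check that $f_a\cdot(0,\dots,0,bT_j^{m},0,\dots,0)\in\tilde{\imath}(R)$ for every $b\in\calO_{\bbX,p_j}$ and $m\ge 0$, since elements supported at other points $p_k$ with $k\ne j$ multiply $f_a$ to zero. Now $f_a$ corresponds to $(0,\dots,0,aT_j^{\nu(a)},0,\dots,0)$, so the product is $(0,\dots,0,abT_j^{\nu(a)+m},0,\dots,0)$. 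By the definition $\nu(a)=\max\{\mu(ab)\mid b\in\calO_{\bbX,p_j}\setminus\{0\}\}$, we have $\mu(ab)\le\nu(a)\le\nu(a)+m$, and since $\tilde{\imath}(R)$ is a graded $K[x_0]$-module (multiplication by $x_0$ raises the $T_j$-exponent by one), the element $(0,\dots,0,abT_j^{\nu(a)+m},0,\dots,0)$ lies in $\tilde{\imath}(R)$. Hence $f_a\in\mathfrak{F}_{\widetilde{R}/R}$.

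For the reverse inclusion $\subseteq$, I would take a homogeneous $f\in\mathfrak{F}_{\widetilde{R}/R}$ and write $\tilde{\imath}(f)=(a_1T_1^{i},\dots,a_sT_s^{i})$ with $a_j\in\calO_{\bbX,p_j}$. The condition $f\widetilde{R}\subseteq R$ forces, for each $j$ and each $b\in\calO_{\bbX,p_j}$, that $(0,\dots,0,a_jbT_j^{i},0,\dots,0)\in\tilde{\imath}(R)$, so $\mu(a_jb)\le i$ for all such $b$, i.e.\ $\nu(a_j)\le i$. This means the single-point component $(0,\dots,0,a_jT_j^{i},0,\dots,0)$ equals $\tilde{\imath}(x_0^{\,i-\nu(a_j)}f_{a_j})$ up to the grading shift, hence lies in the $R$-submodule generated by $f_{a_j}$. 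Writing $f$ as the sum of its single-point components and using that each component is an $R$-multiple of the corresponding $f_{a_j}$, I conclude that $f$ lies in the ideal $\langle f_a\mid 1\le j\le s,\ a\in\calO_{\bbX,p_j}\setminus\{0\}\rangle$.

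The main obstacle I anticipate is the bookkeeping in passing between the homogeneous ring of quotients and the product representation: one must be careful that the grading on $\widetilde{R}$ (where $\deg T_j=1$) matches the grading on $Q^h(R)$ under the isomorphism, and that multiplying by $x_0$ corresponds precisely to raising each $T_j$-exponent. Once this dictionary is fixed, the key technical point reduces to the elementary fact that $\nu(a)$ is exactly the threshold exponent beyond which $aT_j^{i}$ together with all its $\calO_{\bbX,p_j}$-multiples becomes realizable inside $\tilde{\imath}(R)$; this is what makes $f_a$ the natural generator. Since this is cited as \cite[Proposition~2.9]{Kr2}, I expect the argument to be essentially a careful unwinding of definitions rather than to require a genuinely new idea.
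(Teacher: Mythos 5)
Your proof is correct. The paper itself gives no argument for this proposition (it is quoted from \cite[Proposition~2.9]{Kr2}), and your verification --- checking $f_a\widetilde{R}\subseteq R$ on single-point generators $(0,\dots,0,bT_k^m,0,\dots,0)$ using $\mu(ab)\le\nu(a)$ together with the fact that multiplication by $x_0$ raises $T_j$-exponents, and conversely decomposing a homogeneous conductor element into components $\tilde{\imath}(x_0^{\,i-\nu(a_j)}f_{a_j})$ --- is exactly the expected unwinding of the definitions, with no gaps.
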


Some relations between the Dedekind different and the conductor
are given by the next proposition.

\begin{prop}\label{propSec4.3}
Let $\bbX \subseteq \bbP^n_K$ be a $0$-dimensional locally
Gorenstein scheme, and let $\sigma$ be a homogeneous trace map
of degree zero of $Q^h(R)/\!L_0$. Then we have
$$
\mathfrak{F}_{\widetilde{R}/R}^{2}
\,\subseteq\, \delta_\bbX^\sigma
\,\subseteq\, \mathfrak{F}_{\widetilde{R}/R}.
$$
\end{prop}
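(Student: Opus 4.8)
The plan is to establish the two inclusions separately, working locally at each point $p_j$ through the injection $\tilde{\imath}\colon R\hookrightarrow Q^h(R)\cong\prod_{j=1}^s\calO_{\bbX,p_j}[T_j,T_j^{-1}]$, and then transferring the information to the global graded setting. The crucial structural fact I would exploit is the duality $\underline{\Hom}_{L_0}(Q^h(R),L_0)=Q^h(R)\cdot\sigma$ from Proposition~\ref{propSec2.1}, which lets me reinterpret the condition $f\cdot\fC^\sigma_\bbX\subseteq R$ defining $\delta^\sigma_\bbX$ in terms of the trace map.

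For the right-hand inclusion $\delta_\bbX^\sigma\subseteq\mathfrak{F}_{\widetilde{R}/R}$, I would first observe that $\widetilde{R}\subseteq\fC^\sigma_\bbX$: indeed, the local trace maps $\overline{\sigma}_j$ used to build $\sigma$ realize each $\calO_{\bbX,p_j}$ inside its own complementary module, and by Proposition~\ref{propSec3.4} every element of $\widetilde{R}$ (in each graded piece) lands in $\fC^\sigma_\bbX$ after the appropriate normalization by powers of $x_0$. Granting this, if $f\in\delta^\sigma_\bbX$, then by definition $f\cdot\fC^\sigma_\bbX\subseteq R$, so in particular $f\cdot\widetilde{R}\subseteq f\cdot\fC^\sigma_\bbX\subseteq R$. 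Since $f$ already lies in $R\subseteq\widetilde{R}$, this says exactly that $f\in\mathfrak{F}_{\widetilde{R}/R}$, giving the inclusion. The only point requiring care is that $\delta^\sigma_\bbX$ and $\mathfrak{F}_{\widetilde{R}/R}$ are both interpreted as ideals of $R$ inside $Q^h(R)$, so I would make sure the embedding $\widetilde{R}\subseteq\fC^\sigma_\bbX$ is stated with matching degree shifts.

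For the left-hand inclusion $\mathfrak{F}_{\widetilde{R}/R}^2\subseteq\delta^\sigma_\bbX$, the natural route is the dual containment $\fC^\sigma_\bbX\subseteq\mathfrak{F}_{\widetilde{R}/R}^{-1}$ combined with the conductor's self-absorbing behavior. Concretely, I would show that $\mathfrak{F}_{\widetilde{R}/R}\cdot\fC^\sigma_\bbX\subseteq\widetilde{R}$: an element of $\fC^\sigma_\bbX$ is a homogeneous element $g\in Q^h(R)$ with $\sigma(g\cdot Q^h(R))\subseteq L_0$ suitably, and multiplying by the conductor pushes it into $\widetilde{R}$ because $\mathfrak{F}_{\widetilde{R}/R}$ is an ideal of $\widetilde{R}$ that maps $\fC^\sigma_\bbX$ back into the product of local rings. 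Then, taking $f\in\mathfrak{F}_{\widetilde{R}/R}^2$, write $f=\sum a_k b_k$ with $a_k,b_k\in\mathfrak{F}_{\widetilde{R}/R}$; for any $g\in\fC^\sigma_\bbX$ we get $f g=\sum a_k(b_k g)$, and since $b_k g\in\widetilde{R}$ while $a_k\in\mathfrak{F}_{\widetilde{R}/R}$ satisfies $a_k\widetilde{R}\subseteq R$, each summand lies in $R$. Hence $f\cdot\fC^\sigma_\bbX\subseteq R$, i.e. $f\in\delta^\sigma_\bbX$.

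The main obstacle, and the step I would spend the most care on, is verifying $\mathfrak{F}_{\widetilde{R}/R}\cdot\fC^\sigma_\bbX\subseteq\widetilde{R}$, since this is where the trace-theoretic description of $\fC^\sigma_\bbX$ and the explicit local generators of the conductor from Proposition~\ref{propSec4.2} must be reconciled. I expect to argue componentwise: in the $j$-th factor, an element of $\fC^\sigma_\bbX$ looks like $\sum_{k_j}c_{jk_j}e'_{jk_j}T_j^{m}$ (as in Proposition~\ref{propSec3.4}), and multiplying by a conductor element $f_a$ with $\tilde{\imath}(f_a)$ supported in the $j$-th slot keeps the product inside $\calO_{\bbX,p_j}[T_j]$ precisely because $\nu(a)$ is defined to be the worst-case exponent making $a\cdot\calO_{\bbX,p_j}$ integral. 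Once both inclusions $\fC^\sigma_\bbX\cdot\mathfrak{F}_{\widetilde{R}/R}\subseteq\widetilde{R}$ and $\widetilde{R}\subseteq\fC^\sigma_\bbX$ are in hand, the chain of inclusions follows purely formally from the definition $\delta^\sigma_\bbX=(\fC^\sigma_\bbX)^{-1}$.
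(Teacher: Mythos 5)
Your skeleton coincides with the paper's: both inclusions are reduced to the two containments $\widetilde{R}\subseteq\fC^\sigma_\bbX$ and $\mathfrak{F}_{\widetilde{R}/R}\cdot\fC^\sigma_\bbX\subseteq\widetilde{R}$, and your formal deductions from these (the colon-ideal computation giving $\delta^\sigma_\bbX\subseteq\mathfrak{F}_{\widetilde{R}/R}$, and $\mathfrak{F}_{\widetilde{R}/R}^2\cdot\fC^\sigma_\bbX\subseteq\mathfrak{F}_{\widetilde{R}/R}\cdot\widetilde{R}\subseteq R$ for the other side) are correct and identical to the paper's. The containment $\widetilde{R}\subseteq\fC^\sigma_\bbX$ is also unproblematic: it follows at once from Hilbert functions, since $(\fC^\sigma_\bbX)_i\subseteq Q^h(R)_i=\widetilde{R}_i$ and both spaces have dimension $\deg(\bbX)$ for $i\ge 0$.

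However, there is a genuine gap at the step you yourself flag as the main obstacle, $\mathfrak{F}_{\widetilde{R}/R}\cdot\fC^\sigma_\bbX\subseteq\widetilde{R}$, and the justification you sketch would not close it. The danger comes from elements of $\fC^\sigma_\bbX$ of \emph{negative} degree (these exist down to degree $-r_\bbX$). If $g\in(\fC^\sigma_\bbX)_k$ with $k<0$ and $f_a$ is a conductor generator of degree $\nu(a)$ with $\nu(a)+k<0$ (possible, e.g.\ $k=-r_\bbX$ and $\nu(a)<r_\bbX$, whenever $\bbX$ is not a Cayley-Bacharach scheme, cf.\ Theorem~\ref{thmSec4.4}), then writing $g=(\dots,g_jT_j^k,\dots)$ the product $f_a\cdot g=(0,\dots,0,ag_jT_j^{\nu(a)+k},0,\dots,0)$ carries a negative power of $T_j$, so it lies in $\widetilde{R}$ only if it vanishes. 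Nothing in your sketch forces $ag_j=0$: the definition of $\nu(a)$ only concerns membership of elements $(0,\dots,0,abT_j^i,0,\dots,0)$ in $\tilde{\imath}(R)$, and the componentwise shape from Proposition~\ref{propSec3.4} is no constraint at all for a single $j$, since every element of $\calO_{\bbX,p_j}$ is a combination of the $e'_{jk_j}$. (Note also that your ``dual'' description of $\fC^\sigma_\bbX$ is vacuous as stated: $\sigma(g\cdot Q^h(R))\subseteq L_0$ holds for every $g$; the correct condition is $\sigma(g\cdot\tilde{\imath}(R))\subseteq K[x_0]$.) The missing input is exactly this duality constraint. Your componentwise route can be completed with it: for every $b\in\calO_{\bbX,p_j}$ one has $\mu(ab)\le\nu(a)$, hence $\mu(ab)+k<0$, hence $\sigma(g\cdot\tilde{\imath}(f_{ab}))=\overline{\sigma}_j(ag_jb)\,x_0^{\mu(ab)+k}$ is a homogeneous element of negative degree which must lie in $K[x_0]$, so $\overline{\sigma}_j(ag_jb)=0$ for all $b$, and the non-degeneracy of the trace map $\overline{\sigma}_j$ then forces $ag_j=0$. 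The paper avoids components altogether: for $f\in\mathfrak{F}_{\widetilde{R}/R}$ and $g=\Phi(\varphi)$, the functional $f\cdot\varphi$ extends to $\widetilde{R}$ because $\varphi(f\widetilde{R})\subseteq\varphi(R)\subseteq K[x_0]$, and $\underline{\Hom}_{K[x_0]}(\widetilde{R},K[x_0])$ vanishes in negative degrees, so $fg=\Phi(f\cdot\varphi)$ is either zero or of non-negative degree, hence in $\widetilde{R}$. Some argument of this kind is indispensable; without it the crucial lemma, and with it the inclusion $\mathfrak{F}_{\widetilde{R}/R}^2\subseteq\delta^\sigma_\bbX$, remains unproved.
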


\begin{proof}
We know that $Q^h(R)=\prod_{j=1}^s\calO_{\bbX,p_j}[T_j,T_j^{-1}]$
and $(\fC_{\bbX}^\sigma)_i = Q^h(R)_i = \widetilde{R}_i$
for all $i \ge 0$.
This implies $\widetilde{R} \subseteq \fC_{\bbX}^\sigma$.
Thus we get
$$
\delta_\bbX^\sigma
= R:_{Q^h(R)} \fC_{\bbX}^\sigma
\subseteq R:_{Q^h(R)}\widetilde{R}
= R:_{\widetilde{R}}\widetilde{R}
= \mathfrak{F}_{\widetilde{R}/R}.
$$
Since $\bbX$ is a locally Gorenstein scheme, we have
$\Hom_{K}(\calO_{\bbX,p_j},K)\cong \calO_{\bbX,p_j}$
for all $j=1, \dots ,s$.
This implies the isomorphism $\widetilde{R}\cong
\underline{\Hom}_{K[x_0]}(\widetilde{R},K[x_0])$.
Hence we get
$\HF_{\underline{\Hom}_{K[x_0]}(\widetilde{R},K[x_0])}(i)
= \deg(\bbX)$ if and only if $i \ge 0$.
Let $f \in (\mathfrak{F}_{\widetilde{R}/R})_i$, let
$g \in (\fC_{\bbX}^\sigma)_k$, and let
$\varphi\in (\omega_R)_{k+1}$ such that
$g=\Phi(\varphi)$ where $\Phi$ was defined by~(\ref{mapSec2.1}).
Observe that
$(f\cdot\varphi)(\widetilde{R})= \varphi(f\widetilde{R})
\subseteq \varphi(R)\subseteq K[x_0]$.
This yields $f\cdot\varphi \in
\underline{\Hom}_{K[x_0]}(\widetilde{R},K[x_0])_{i+k}$.
If $f\cdot\varphi \ne 0$, then $\deg(f\cdot\varphi)=i+k\ge 0$.
Thus we have
$f\cdot g = f\cdot\Phi(\varphi)=\Phi(f\cdot\varphi)
\in \widetilde{R}$, and hence we get the inclusion
$\mathfrak{F}_{\widetilde{R}/R}\cdot\fC_{\bbX}^\sigma
\subseteq\widetilde{R}$.
Now we see that
$\mathfrak{F}_{\widetilde{R}/R}^2 \cdot \fC_{\bbX}^\sigma
\subseteq \mathfrak{F}_{\widetilde{R}/R} \widetilde{R}
\subseteq R$.
This yields the inclusion
$\mathfrak{F}_{\widetilde{R}/R}^2\subseteq\delta_\bbX^\sigma$.
Altogether, the claim follows.
\end{proof}

The Cayley-Bacharach property of a 0-dimensional scheme can be
characterized in terms of the conductor of~$R$ in $\widetilde{R}$,
as the following theorem shows.

\begin{thm}\label{thmSec4.4}
Let $\bbX\subseteq\bbP^n_K$ be a $0$-dimensional scheme,
and let $0 \le d \le r_{\bbX}-1$. Then $\bbX$ has CBP($d$)
if and only if $\mathfrak{F}_{\widetilde{R}/R} \subseteq
\bigoplus_{i\ge d+1} R_i$.
In particular, $\bbX$ is a Cayley-Bacharach scheme if and only if
$\mathfrak{F}_{\widetilde{R}/R} = \bigoplus_{i \ge r_{\bbX}} R_i$.
\end{thm}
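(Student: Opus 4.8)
The plan is to translate both sides of the asserted equivalence into statements about the exponents $\nu(a)$ that appear in the description of the conductor in Proposition~\ref{propSec4.2}, and then to prove that the minimum of these exponents over a point is exactly its degree. First I would record the homogeneous reformulation of the right-hand side: by Proposition~\ref{propSec4.2} the conductor is generated by the homogeneous elements $f_a$ with $\deg(f_a)=\nu(a)$, one for each $p_j\in\Supp(\bbX)$ and each $a\in\calO_{\bbX,p_j}\setminus\{0\}$, and each such $f_a$ is nonzero since $\tilde{\imath}$ is injective and $a\ne0$. Because $R$ is standard graded with $R_0=K$, an ideal generated by homogeneous elements is contained in $\bigoplus_{i\ge d+1}R_i$ if and only if each of these generators has degree at least $d+1$. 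Hence $\mathfrak{F}_{\widetilde{R}/R}\subseteq\bigoplus_{i\ge d+1}R_i$ is equivalent to $\nu(a)\ge d+1$ for all $j$ and all $a\in\calO_{\bbX,p_j}\setminus\{0\}$.

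The core step is the identity
$$
\min\{\,\nu(a)\mid a\in\calO_{\bbX,p_j}\setminus\{0\}\,\}=\deg_{\bbX}(p_j).
$$
To obtain "$\le$" I would compute $\nu$ on a socle element. For a nonzero $s_j\in\fG(\calO_{\bbX,p_j})$, corresponding via Proposition~\ref{propSec1.2} to a maximal $p_j$-subscheme $\bbY$, one has $s_jb=0$ for $b\in\fm_{\bbX,p_j}$ and $s_jb\in\kappa(p_j)\,s_j\setminus\{0\}$ for $b$ a unit, so the products $s_jb$ with $b\ne0$ sweep out $\langle e_{j1}s_j,\dots,e_{j\varkappa_j}s_j\rangle_K=\kappa(p_j)s_j$. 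Since $\{c\mid\mu(c)\le i\}$ is a $K$-subspace of the $p_j$-component of $Q^h(R)$, the maximum of $\mu$ over a spanning set equals its maximum over the whole space, which yields $\nu(s_j)=\max_{k}\mu(e_{jk_j}s_j)=\mu_{\bbY/\bbX}$. Minimizing over all maximal $p_j$-subschemes then gives $\min_s\nu(s)=\deg_{\bbX}(p_j)$, $s$ ranging over nonzero socle elements, and in particular $\min_a\nu(a)\le\deg_{\bbX}(p_j)$.

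For the reverse inequality "$\ge$" — which I expect to be the main obstacle, because the Cayley-Bacharach property only constrains socle elements whereas the conductor is built from all of $\calO_{\bbX,p_j}$ — I would reduce an arbitrary $a$ to a socle element. In the Artinian local ring $\calO_{\bbX,p_j}$ every nonzero ideal meets the socle (apply $\fm$ repeatedly to a nonzero element of $\langle a\rangle$ until it is killed), so $\langle a\rangle\cap\fG(\calO_{\bbX,p_j})\ne\langle0\rangle$ and we may choose a nonzero socle element $s=ab_0$. Then each product $sb=a(b_0b)$ is again of the form $ab'$ or is zero, whence $\nu(s)\le\nu(a)$; combined with $\nu(s)=\mu_{\bbY/\bbX}\ge\deg_{\bbX}(p_j)$ this gives $\nu(a)\ge\deg_{\bbX}(p_j)$ for every $a$, completing the identity. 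The two reformulations now combine: $\mathfrak{F}_{\widetilde{R}/R}\subseteq\bigoplus_{i\ge d+1}R_i$ holds if and only if $\deg_{\bbX}(p_j)\ge d+1$ for all $j$, which is precisely CBP($d$).

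Finally, for the "in particular" assertion I would prove the always-valid reverse containment $\bigoplus_{i\ge r_{\bbX}}R_i\subseteq\mathfrak{F}_{\widetilde{R}/R}$: for $f\in R_i$ with $i\ge r_{\bbX}$ and any $m\ge0$, the product $f\widetilde{R}_m$ sits in degree $i+m\ge r_{\bbX}$, where $\tilde{\imath}\colon R_{i+m}\to Q^h(R)_{i+m}=\widetilde{R}_{i+m}$ is an isomorphism, so $f\widetilde{R}\subseteq R$ and thus $f\in\mathfrak{F}_{\widetilde{R}/R}$. Taking $d=r_{\bbX}-1$ in the main equivalence, the inclusion $\mathfrak{F}_{\widetilde{R}/R}\subseteq\bigoplus_{i\ge r_{\bbX}}R_i$ combined with this containment upgrades to the equality $\mathfrak{F}_{\widetilde{R}/R}=\bigoplus_{i\ge r_{\bbX}}R_i$, characterizing Cayley-Bacharach schemes.
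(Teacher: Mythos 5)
Your proof is correct and follows essentially the same route as the paper: both translate the conductor condition through Proposition~\ref{propSec4.2} into bounds on the numbers $\nu(a)$, prove $\nu(s_j)=\mu_{\bbY/\bbX}$ for socle elements (you via a subspace argument, the paper via subadditivity of $\mu$ applied to $a\equiv\sum_k c_{jk}e_{jk} \pmod{\fm_{\bbX,p_j}}$), and handle arbitrary $a$ by passing to a socle multiple $s=ab_0$ together with $\nu(s)\le\nu(a)$. Your packaging of these ingredients as the single identity $\min_a\nu(a)=\deg_{\bbX}(p_j)$ is a tidy reorganization rather than a different argument, and your treatment of the ``in particular'' clause (the containment $\bigoplus_{i\ge r_{\bbX}}R_i\subseteq\mathfrak{F}_{\widetilde{R}/R}$ via $R_{i}=\widetilde{R}_{i}$ in degrees $\ge r_{\bbX}$) coincides with the paper's.
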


\begin{proof}
Suppose that $\bbX$ has CBP($d$), but
$\mathfrak{F}_{\widetilde{R}/R} \nsubseteq
\bigoplus_{i\ge d+1} R_i$.
It follows from Proposition~\ref{propSec4.2} that
there are a non-zero element $a\in \calO_{\bbX,p_j}$
and a homogeneous element
$f_a \in \mathfrak{F}_{\widetilde{R}/R}\setminus\{0\}$
such that $\tilde{\imath}(f_a)
=(0,\dots,0,aT_j^{\nu(a)},0,\dots,0)$ and $\nu(a)\le d$.
So, we can find an element $b \in \calO_{\bbX,p_j}$ with
$s_j := ab \in \fG(\calO_{\bbX,p_j})\setminus\{0\}$.
By Proposition~\ref{propSec1.2}, there is a maximal
$p_j$-subscheme $\bbY$ of~$\bbX$ associated to
the socle element $s_j$.
We want to prove that $\mu_{\bbY/\bbX} \le d$.
Let $\varkappa_j= \dim_K \kappa(p_j)$, let
$\{e_{j1}, \dots, e_{j\varkappa_j}\} \subseteq \calO_{\bbX,p_j}$
be such that their residue classes form a $K$-basis of~$\kappa(p_j)$,
and let $\{f^*_{j1}, \dots,f^*_{j\varkappa_j}\}$ be the set
of minimal separators of~$\bbY$ in~$\bbX$ w.r.t. $s_j$ and
$\{e_{j1}, \dots, e_{j\varkappa_j}\}$.
Notice that $\tilde{\imath}(f^*_{jk_j}) =
(0, \dots, 0, e_{jk_j}s_jT_j^{\mu(e_{jk_j}s_j)}, 0, \dots, 0)$
and $\deg(f^*_{jk_j}) = \mu(e_{jk_j}s_j)$ for
$k_j = 1, \dots, \varkappa_j$.
Clearly, we have
$$
\begin{aligned}
\nu(s_j) &=
\max\{\, \mu(a's_j) \mid  a' \in \calO_{\bbX,p_j}
\setminus\{0\} \,\} \\
&\ge \max\{\, \mu(e_{jk}s_j) \,\mid\,
k = 1,\dots, \varkappa_j \,\}.
\end{aligned}
$$
This implies $\mu_{\bbY/\bbX} \le \nu(s_j)$.
Moreover, we also see that
$$
\nu(s_j) = \nu(ab) = \max\{\, \mu(abc) \,\mid\,
c \in \calO_{\bbX,p_j}, abc \ne 0 \,\} \le \nu(a) \le d.
$$
This yields $\mu_{\bbY/\bbX} \le \nu(s_j) \le d$.
Thus we get $\deg_\bbX(p_j)\le d$, and hence
$\bbX$ does not have CBP($d$), a contradiction.

Conversely, suppose that
$\mathfrak{F}_{\widetilde{R}/R}\subseteq\bigoplus_{i\ge d+1}R_i$.
Let $\bbY \subseteq \bbX$ be a maximal $p_j$-subscheme, and let
$\{f^*_{j1}, \dots,f^*_{j\varkappa_j}\}$ be the set of minimal
separators of~$\bbY$ in~$\bbX$ w.r.t. $s_j$ and
$\{e_{j1}, \dots, e_{j\varkappa_j}\}$.
As above, we always have
$$
\nu(s_j)\ge\max\{\mu(e_{jk}s_j)\mid k=1,\dots,\varkappa_j\}.
$$
Also, it is easy to check that $\mu(a+b)\le \max\{\mu(a),\mu(b)\}$
for all $a,b\in \calO_{\bbX,p_j}\setminus\{0\}$.
Let $a\in \calO_{\bbX,p_j}$ be such that $as_j \ne 0$.
Then we have $a \notin \fm_{\bbX,p_j}$ and we may write
$a = c_{j1}e_{j1} + \cdots + c_{j\varkappa_j}e_{j\varkappa_j}
\ ({\rm mod}\ \fm_{\bbX,p_j})$ for
$c_{j1},\dots,c_{j\varkappa_j}\in K$,
not all equal to zero.
We deduce $as_j = c_{j1}e_{j1}s_j + \cdots +
c_{j\varkappa_j}e_{j\varkappa_j}s_j$. Hence we have
$$
\begin{aligned}
\mu(as_j) &= \mu(c_{j1}e_{j1}s_j + \cdots +
c_{j\varkappa_j}e_{j\varkappa_j}s_j) \\
&\le \max\{\, \mu(e_{jk}s_j) \,\mid\,
k = 1,\dots, \varkappa_j \,\}.
\end{aligned}
$$
This implies
$\nu(s_j) = \max\{\mu(e_{jk}s_j)\mid k=1,\dots,\varkappa_j\}$.
Without loss of generality, we may assume that
$\nu(s_j) = \deg(f^*_{j1}) = \mu(e_{j1}s_j)$.
Thus we have $\nu(s_j) = \nu(e_{j1}s_j)$ and
$f^*_{j1} \in  \mathfrak{F}_{\widetilde{R}/R}$.
Since $\mathfrak{F}_{\widetilde{R}/R} \subseteq
\bigoplus_{i \ge d+1} R_i$, it follows that
$\nu(s_j) = \deg(f^*_{j1}) \ge d+1$.
From this we conclude that $\deg_{\bbX}(p_j) \ge d+1$ for all
$j=1,\dots,s$. In other words, the scheme~$\bbX$ has~CBP($d$).

Moreover, if we identify $R$ with its image under
$\tilde{\imath}$, we have $R_i = \widetilde{R}_i$
for all $i \ge r_{\bbX}$. Thus the ideal
$\bigoplus_{i \ge r_{\bbX}} R_i$ is an ideal of both~$R$
and~$\widetilde{R}$, and it is contained in the
conductor~$\mathfrak{F}_{\widetilde{R}/R}$.
Hence the additional claim follows.
\end{proof}

The inclusion
$\mathfrak{F}_{\widetilde{R}/R}^2\subseteq\delta_\bbX^\sigma$
in Proposition~\ref{propSec4.3} can be an equality
in the following case. In this case the converse
of Corollary~\ref{corSec3.6}(b) holds true.

\begin{prop}\label{propSec4.5}
Let $\bbX \subseteq \bbP^n_K$ be a $0$-dimensional locally
Gorenstein scheme, and let $\sigma$ be a homogeneous trace map
of degree zero of $Q^h(R)/L_0$.
Then the scheme $\bbX$ has minimal Dedekind different
if and only if $\bbX$ is a Cayley-Bacharach scheme and
$\mathfrak{F}_{\widetilde{R}/R}^2 =\delta_\bbX^\sigma$.
\end{prop}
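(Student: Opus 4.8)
The plan is to reduce both implications to the single identity
$$\left(\bigoplus_{i\ge r_\bbX}R_i\right)^{2}=\bigoplus_{i\ge 2r_\bbX}R_i,$$
valid whenever $\bbX$ is a Cayley-Bacharach scheme, and then to read off the equivalence from Theorem~\ref{thmSec4.4} together with the definition of minimal Dedekind different.

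First I would treat the forward implication. Assume $\bbX$ has minimal Dedekind different, so that $\delta_\bbX^\sigma=\bigoplus_{i\ge 2r_\bbX}R_i$. By Corollary~\ref{corSec3.6}(b) the scheme $\bbX$ is then a Cayley-Bacharach scheme, so only the equality $\mathfrak{F}_{\widetilde{R}/R}^{2}=\delta_\bbX^\sigma$ remains to be shown. Theorem~\ref{thmSec4.4} gives $\mathfrak{F}_{\widetilde{R}/R}=\bigoplus_{i\ge r_\bbX}R_i$, and the displayed identity then yields $\mathfrak{F}_{\widetilde{R}/R}^{2}=\bigoplus_{i\ge 2r_\bbX}R_i=\delta_\bbX^\sigma$.

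For the converse, assume $\bbX$ is a Cayley-Bacharach scheme with $\mathfrak{F}_{\widetilde{R}/R}^{2}=\delta_\bbX^\sigma$. Again Theorem~\ref{thmSec4.4} gives $\mathfrak{F}_{\widetilde{R}/R}=\bigoplus_{i\ge r_\bbX}R_i$, so the same identity produces $\delta_\bbX^\sigma=\mathfrak{F}_{\widetilde{R}/R}^{2}=\bigoplus_{i\ge 2r_\bbX}R_i$, which is exactly the statement that $\bbX$ has minimal Dedekind different.

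It remains to establish the identity, which I regard as the only real content of the proof. Writing $J=\bigoplus_{i\ge r_\bbX}R_i$, the inclusion $J^{2}\subseteq\bigoplus_{i\ge 2r_\bbX}R_i$ is immediate on degree grounds, since a product of two homogeneous elements of degrees $\ge r_\bbX$ has degree $\ge 2r_\bbX$. For the reverse inclusion I would exploit that $x_0$ is a non-zerodivisor on $R$ and that $\HF_\bbX(i)=\deg(\bbX)$ for all $i\ge r_\bbX$: for every $i\ge r_\bbX$ the multiplication map $x_0^{\,i-r_\bbX}\colon R_{r_\bbX}\to R_i$ is an injective $K$-linear map between spaces of equal dimension $\deg(\bbX)$, hence an isomorphism. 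Thus any homogeneous $h\in R_i$ with $i\ge 2r_\bbX$ can be written as $h=x_0^{\,i-r_\bbX}f$ with $f\in R_{r_\bbX}$; since $i-r_\bbX\ge r_\bbX$, both factors $x_0^{\,i-r_\bbX}$ and $f$ lie in $J$, whence $h\in J^{2}$. The main (and essentially only) obstacle is this surjectivity step, and it is settled by the Cohen-Macaulay property of $R$ encoded in $x_0$ being a non-zerodivisor, combined with the stabilization of the Hilbert function.
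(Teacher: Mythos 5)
Your proposal is correct and follows essentially the same route as the paper: both directions reduce to Corollary~\ref{corSec3.6}(b) and Theorem~\ref{thmSec4.4} together with the identity $\bigl(\bigoplus_{i\ge r_\bbX}R_i\bigr)^2=\bigoplus_{i\ge 2r_\bbX}R_i$, which the paper uses implicitly without comment. Your explicit verification of that identity (surjectivity of multiplication by $x_0^{\,i-r_\bbX}$ on $R_{r_\bbX}$, via the non-zerodivisor property and the stabilized Hilbert function) is a valid filling-in of a detail the paper omits, not a different method.
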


\begin{proof}
Suppose that the scheme $\bbX$ has minimal Dedekind different.
Then the Dedekind different satisfies
$\HF_{\delta_\bbX^\sigma}(2r_\bbX-1)=0$.
By Corollary~\ref{corSec3.6}(b), the scheme $\bbX$
is a Cayley-Bacharach scheme.
So, Theorem~\ref{thmSec4.4} yields that
$\mathfrak{F}_{\widetilde{R}/R} =
\bigoplus_{i\ge r_\bbX} R_i$. Hence we have
$$
\mathfrak{F}_{\widetilde{R}/R}^2
= {\textstyle \bigoplus\limits_{i\ge 2r_\bbX}} R_i
= \delta_\bbX^\sigma.
$$
Conversely, if $\bbX$ is a Cayley-Bacharach scheme and
$\mathfrak{F}_{\widetilde{R}/R}^2 =\delta_\bbX^\sigma$,
then Theorem~\ref{thmSec4.4} implies the equality
$\delta_\bbX^\sigma = \bigoplus_{i\ge 2r_\bbX} R_i$.
Thus $\bbX$ has minimal Dedekind different.
\end{proof}

\begin{exam}
Let $\bbX$ and $\bbY$ be the two 0-dimensional reduced schemes
given in Example~\ref{examSec3.7}.
Both $\bbX$ and $\bbY$ have minimal Dedekind different.
Thus the Dedekind different equals to the square of
the conductor for these 0-dimensional schemes by the
preceding proposition.
\end{exam}

Our next theorem presents a generalization of Dedekind's formula
for the conductor $\mathfrak{F}_{\widetilde{R}/R}$ and
the Dedekind complementary module~$\fC_\bbX^\sigma$.
We use the notation $\nu_j = \dim_K\calO_{\bbX,p_j}$ for
all $j = 1, \dots, s$.

\begin{thm}\label{thmSec4.7}
Let $\bbX\subseteq\bbP^n_K$ be a $0$-dimensional locally
Gorenstein scheme with support $\Supp(\bbX)=\{p_1,\dots,p_s\}$,
and let $\sigma$ be a homogeneous trace map
of degree zero of $Q^h(R)/L_0$.
Further, we let $I_j$ be the homogeneous vanishing ideal
of~$\bbX$ at~$p_j$, and we let $\bbY_j$ be the subscheme
of~$\bbX$ defined by $I_{\bbY_j} = \bigcap_{k\ne j} I_k$
for $j = 1, \dots, s$.
Then the formula
$$
\mathfrak{F}_{\widetilde{R}/R} \cdot \fC^\sigma_\bbX
= \widetilde{R}
$$
holds true if one of the following conditions is satisfied:
\begin{enumerate}
  \item The scheme $\bbX$ is a Cayley-Bacharach scheme.

  \item For all $j \in \{1, \dots, s\}$, the Hilbert function
  of~$\bbY_j$ is of the form
   $$
   \HF_{\bbY_j}(i)=
   \begin{cases}
   \HF_{\bbX}(i)  & \textrm{if}\ i < \alpha_{\bbY_j/\bbX},\\
   \HF_{\bbX}(i)-\nu_j &\textrm{if}\ i\ge\alpha_{\bbY_j/\bbX}.
   \end{cases}
   $$
\end{enumerate}
\end{thm}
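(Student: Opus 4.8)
The plan is to prove $\mathfrak{F}_{\widetilde{R}/R}\cdot\fC^\sigma_\bbX=\widetilde{R}$ by showing the two reverse inclusions under each hypothesis. The inclusion $\mathfrak{F}_{\widetilde{R}/R}\cdot\fC^\sigma_\bbX\subseteq\widetilde{R}$ already holds unconditionally: indeed, it was established inside the proof of Proposition~\ref{propSec4.3} that $\mathfrak{F}_{\widetilde{R}/R}\cdot\fC_\bbX^\sigma\subseteq\widetilde{R}$. So the real content is the opposite inclusion $\widetilde{R}\subseteq\mathfrak{F}_{\widetilde{R}/R}\cdot\fC^\sigma_\bbX$, and it suffices to produce, for each generator of~$\widetilde{R}$, a representation as a product of a conductor element with a complementary-module element. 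Since $\widetilde{R}=\prod_{j=1}^s\calO_{\bbX,p_j}[T_j]$, its homogeneous components in degree $i\ge 0$ are spanned by the elements $(0,\dots,0,aT_j^i,0,\dots,0)$ with $a\in\calO_{\bbX,p_j}$, so I would fix $j$, fix $a$, and try to write such a component as a product.

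First, for case~(a), I would exploit Theorem~\ref{thmSec4.4}, which tells me that a Cayley-Bacharach scheme satisfies $\mathfrak{F}_{\widetilde{R}/R}=\bigoplus_{i\ge r_\bbX}R_i$. The key point is that in every degree $i\ge r_\bbX$ the restriction $\tilde{\imath}|_{R_i}:R_i\to Q^h(R)_i=\widetilde{R}_i$ is an isomorphism (as noted throughout Section~2), so $R_i=\widetilde{R}_i$ there. Meanwhile $\fC^\sigma_\bbX\supseteq\widetilde{R}$, and its nonpositive-degree part is where the useful low-degree elements live: by Proposition~\ref{propSec3.8} (valid since $\bbX$ is Cayley-Bacharach) the module $\fC_\bbX^\sigma$ reaches back far enough that I can find units. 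Concretely I would use the elements $g_j^*=x_0^{-2r_\bbX}\widetilde{g}_j^*\in(\fC_\bbX^\sigma)_{-r_\bbX}$ from \cite[Proposition~3.2]{KL} with $(\widetilde{g}_j^*)_{p_j}$ a unit of $\calO_{\bbX,p_j}$. Given a homogeneous generator $\eta=(0,\dots,0,aT_j^i,0,\dots,0)$ of $\widetilde{R}$ with $i\ge 0$, I would multiply by $g_j^*$ to land in degree $i-r_\bbX<i$, invert the unit component, and reconstruct $\eta$ as a product $f\cdot g_j^*$ where $f$ lies in a sufficiently high degree of $\mathfrak{F}_{\widetilde{R}/R}=\bigoplus_{i\ge r_\bbX}R_i$; raising degrees by multiplying with powers of $x_0$ keeps everything inside the conductor. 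The arithmetic of exponents of $x_0$ is routine; the essential input is the existence of the unit-component complementary elements, which is exactly what the Cayley-Bacharach property grants.

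For case~(b), the Cayley-Bacharach hypothesis is unavailable, so I would work locally at each $p_j$ using the subschemes $\bbY_j$ with $I_{\bbY_j}=\bigcap_{k\ne j}I_k$. The prescribed Hilbert function of $\bbY_j$, namely $\HF_{\bbY_j}(i)=\HF_\bbX(i)$ for $i<\alpha_{\bbY_j/\bbX}$ and $\HF_\bbX(i)-\nu_j$ afterwards, forces $(I_{\bbY_j/\bbX})_i$ to have full dimension $\nu_j=\dim_K\calO_{\bbX,p_j}$ for all $i\ge\alpha_{\bbY_j/\bbX}$, which means that in those degrees the localization map $R_i\to\calO_{\bbX,p_j}[T_j]_i$ surjects onto the entire $p_j$-component. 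The idea is that $I_{\bbY_j/\bbX}$ consists exactly of elements supported at $p_j$, i.e. $\tilde{\imath}(I_{\bbY_j/\bbX})\subseteq(0,\dots,0,\calO_{\bbX,p_j}[T_j,T_j^{-1}],0,\dots,0)$, and the stated Hilbert function guarantees $(I_{\bbY_j/\bbX})_i$ already captures all of $\calO_{\bbX,p_j}$ (after clearing $T_j$-powers) once $i\ge\alpha_{\bbY_j/\bbX}$. So to represent a generator $(0,\dots,0,aT_j^i,0,\dots,0)$ of $\widetilde{R}_i$, I would pick a high-degree element of $I_{\bbY_j/\bbX}$ realizing the required germ $a$ at $p_j$ (this element lies in $\mathfrak{F}_{\widetilde{R}/R}$ because it is supported only at $p_j$ and so annihilates the other factors, hence multiplies $\widetilde{R}$ into $R$), then correct the $T_j$-exponent and any unit factor by multiplying with a suitable element of $\fC^\sigma_\bbX$; Proposition~\ref{propSec3.4} gives the explicit componentwise description of $\fC^\sigma_\bbX$ that makes this correction transparent.

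The main obstacle I anticipate is in case~(b): proving that the elements I extract from $I_{\bbY_j/\bbX}$ genuinely lie in the conductor $\mathfrak{F}_{\widetilde{R}/R}$ and that, running over all $j$, their products with $\fC^\sigma_\bbX$ sweep out \emph{all} of $\widetilde{R}$ rather than just each local factor separately. The local factors $\calO_{\bbX,p_j}[T_j]$ assemble into $\widetilde{R}$ as a direct product, so I must check that for each $j$ I can reach the full component $(0,\dots,0,\calO_{\bbX,p_j}[T_j]_i,0,\dots,0)$; this is precisely where the hypothesis on $\HF_{\bbY_j}$ is used, since it forbids any drop in $\dim_K(I_{\bbY_j/\bbX})_i$ below $\nu_j$ for $i\ge\alpha_{\bbY_j/\bbX}$, equivalently it forces $\alpha_{\bbY_j/\bbX}=\mu_{\bbY_j/\bbX}$ in the notation of Proposition~\ref{propSec1.5}(b) and thereby makes $I_{\bbY_j/\bbX}$ behave like a single local "uniform" separator ideal. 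I would also verify that the two stated conditions are genuinely sufficient and not secretly equivalent by checking that Cayley-Bacharach is not needed to run the case-(b) argument. Once the local surjectivity is pinned down, gluing the local representations into a global product is formal, since $\widetilde{R}$ is the product of its local factors and $\fC^\sigma_\bbX$ already contains $\widetilde{R}$.
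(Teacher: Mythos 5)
Your treatment of case (a) is correct and is essentially the paper's own proof: reduce to generators of $\widetilde{R}$ supported at one point, take the elements $g_j^*=x_0^{-2r_\bbX}\widetilde{g}_j^*\in(\fC_\bbX^\sigma)_{-r_\bbX}$ with unit germ at $p_j$ from \cite[Proposition~3.2]{KL}, and pair them with conductor elements supplied by Theorem~\ref{thmSec4.4} together with the identification $R_i=\widetilde{R}_i$ for $i\ge r_\bbX$.

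Case (b), however, has a genuine gap, in two places. First, your parenthetical justification for conductor membership is false as stated: an element $f\in I_{\bbY_j/\bbX}$ with $\tilde{\imath}(f)=(0,\dots,0,aT_j^{\deg f},0,\dots,0)$ satisfies $f\widetilde{R}\subseteq R$ only if \emph{every} product $ab$ with $b\in\calO_{\bbX,p_j}$ is realized as the germ of an element of $R$ in degree $\deg(f)$, i.e. $\nu(a)\le\deg(f)$ in the notation of Proposition~\ref{propSec4.2}; being ``supported only at $p_j$'' gives nothing of the sort. The paper gets $I_{\bbY_j/\bbX}\subseteq\mathfrak{F}_{\widetilde{R}/R}$ from hypothesis (b) by noting that full dimension $\nu_j$ of $(I_{\bbY_j/\bbX})_{\alpha_{\bbY_j/\bbX}}$ forces $\mu(a)=\nu(a)=\alpha_{\bbY_j/\bbX}$ for all nonzero $a\in\calO_{\bbX,p_j}$, and then invokes Proposition~\ref{propSec4.2}. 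Second, and more seriously, the step ``correct the $T_j$-exponent and any unit factor by multiplying with a suitable element of $\fC_\bbX^\sigma$'' is precisely the crux, and it is not transparent. Since $(\fC_\bbX^\sigma)_d=0$ for $d<-r_\bbX$, your corrector must live in degree $\ge -r_\bbX$; if you take the conductor element in degree $r_\bbX$ (the only degree where membership in $\mathfrak{F}_{\widetilde{R}/R}$ is automatic), the corrector for a degree-zero target must be an element of $(\fC_\bbX^\sigma)_{-r_\bbX}$ with unit germ at $p_j$ --- but the existence of such elements is exactly the Cayley-Bacharach property (\cite[Proposition~3.2]{KL}), which case (b) does not assume, so the argument is circular. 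The paper avoids this by taking conductor elements inside $(I_{\bbY_j/\bbX})_{\alpha_{\bbY_j/\bbX}}$ and then \emph{constructing} a corrector $g_{j1}=\Phi(\varphi_{j1})\in(\fC_\bbX^\sigma)_{-\alpha_{\bbY_j/\bbX}}$ with unit germ at $p_j$: one first proves that the minimal separators $f^*_{j1},\dots,f^*_{j\nu_j}$ remain linearly independent modulo $x_0$ (using that $I_{\bbY_j/\bbX}$ is saturated), then lifts a functional on $\overline{R}$ annihilating $\overline{f}^*_{j2},\dots,\overline{f}^*_{j\nu_j}$ but not $\overline{f}^*_{j1}$ through the epimorphism $\omega_R(1)\twoheadrightarrow\underline{\Hom}_K(\overline{R},K)$, and finally uses the explicit formula of Proposition~\ref{propSec3.4} together with the fact that some dual-basis element $e'_{j1}$ may be taken to be a unit. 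This construction is the substance of the proof of (b), and your sketch does not contain it.
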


\begin{proof}
As in the proof of Proposition~\ref{propSec4.3},
we have the inclusion
$\mathfrak{F}_{\widetilde{R}/R}\cdot\fC_\bbX^\sigma
\subseteq\widetilde{R}$.
Now we prove the reverse inclusion if~(a) or~(b)
is satisfied.

(a)\ For every $j \in \{1, \dots, s\}$, we let
$\{\, e_{j1}, \dots, e_{j\nu_j} \,\}$
be a $K$-basis of~$\calO_{\bbX,p_j}$ and set
$\epsilon_{jk_j}:=(0, \dots, 0, e_{jk_j}, 0, \dots, 0)
\in \widetilde{R}$, where $k_j \in \{1, \dots, \nu_j\}$.
Then the elements $\{\epsilon_{11}, \dots, \epsilon_{s\nu_s}\}$
form a $K[x_0]$-basis of~$\widetilde{R}$.
Thus it is enough to show that
$\epsilon_{11}, \dots, \epsilon_{s\nu_s}$ are contained in
$\mathfrak{F}_{\widetilde{R}/R}\cdot \fC_\bbX^\sigma$.
Since $\bbX$ is a Cayley-Bacharach scheme,
for $j = 1, \dots, s$ we find
$g^*_j \in (\fC_{R/K[x_0]})_{-r_{\bbX}}$
such that $g^*_j = x_0^{-2r_{\bbX}} \widetilde{g}^*_j$,
where $\widetilde{g}^*_j\in R_{r_{\bbX}}$ and
$(\widetilde{g}^*_j)_{p_j}$ is a unit of~$\calO_{\bbX,p_j}$
(cf.~\cite[Proposition~3.2]{KL}).
By identifying $R$ with its image in $Q^h(R)$
under $\tilde{\imath}$, the element
$h_{jk_j} := (0, \dots, 0,(\widetilde{g}^*_j)^{-1}_{p_j}
e_{jk_j}T_j^{r_{\bbX}}, 0, \dots, 0)$
is contained in~$R_{r_{\bbX}}\setminus\{0\}$
for all $j \in \{1, \dots, s\}$ and $k_j\in \{1,\dots,\nu_j\}$.
We see that
$$
h_{jk_j}\cdot g^*_j = x_0^{-2r_{\bbX}} h_{jk_j}\widetilde{g}^*_j
= x_0^{-2r_{\bbX}} (0, \dots, 0, e_{jk_j}T_j^{2r_{\bbX}},
0, \dots, 0)
= \epsilon_{jk_j} \in \widetilde{R}.
$$
By Theorem~\ref{thmSec4.4}, we have
$\mathfrak{F}_{\widetilde{R}/R}=\bigoplus_{i\geq r_{\bbX}}R_i$.
This implies $h_{11}, \dots, h_{s\nu_s}
\in \mathfrak{F}_{\widetilde{R}/R}$.
Therefore we obtain $\epsilon_{11}, \dots, \epsilon_{s\nu_s} \in
\mathfrak{F}_{\widetilde{R}/R} \cdot \fC_\bbX^\sigma$,
as desired.

(b)\ In a similar fashion, we proceed to show that
$\epsilon_{11}, \dots, \epsilon_{s\nu_s} \in
\mathfrak{F}_{\widetilde{R}/R}\cdot\fC_\bbX^\sigma$.
For $j=1, \dots, s$, let $\overline{\sigma}_j$ denote
the trace map of the algebra~$\calO_{\bbX,p_j}/K$
(associated to~$\sigma$),
and let $\{e'_{j1}, \dots, e'_{j\nu_j}\}$
be the $K$-basis of~$\calO_{\bbX,p_j}$ which is
dual to the $K$-basis $\{e_{j1}, \dots, e_{j\nu_j}\}$
w.r.t.~$\overline{\sigma}_j$.
W.l.o.g., we may assume that $e'_{j1}$ is a unit
of~$\calO_{\bbX,p_j}$ for all $j \in \{1, \dots, s\}$.
Note that the subscheme $\bbY_j$ has degree
$\deg(\bbY_j)=\deg(\bbX) - \nu_j$ for all $j = 1, \dots, s$.
It follows from the assumption that
$\alpha_{\bbY_j/\bbX} =\mu(e_{j1})=\cdots=\mu(e_{j\nu_j})$.
Then we have $I_{\bbY_j/\bbX} =
\langle\, f^*_{j1}, \dots, f^*_{j\nu_j} \,\rangle$,
where
$$
f^*_{jk_j} = \tilde{\imath}^{-1}((0, \dots, 0,
e_{jk_j}T_j^{\alpha_{\bbY_j/\bbX}}, 0, \dots, 0))
$$
for $k_j=1, \dots, \nu_j$.
The set $\{ (0, \dots, 0, a T_j^{\alpha_{\bbY_j/\bbX}}, 0, \dots, 0)
\,\mid\, a\in \calO_{\bbX,p_j} \}$ is
the image of~$(I_{\bbY_j/\bbX})_{\alpha_{\bbY_j/\bbX}}$
in~$\widetilde{R}$.
This implies
$\nu(a) = \mu(a) = \alpha_{\bbY_j/\bbX}$ for every non-zero
element $a\in \calO_{\bbX,p_j}$.
Thus~Proposition~\ref{propSec4.2} yields that
$I_{\bbY_j/\bbX} \subseteq \mathfrak{F}_{\widetilde{R}/R}$.

Obviously, we have $f^*_{jk_j} \notin \langle x_0 \rangle$
and its image $\overline{f}^*_{jk_j}$
in~$\overline{R}=R/\langle x_0\rangle$ is
a non-zero element for $k_j = 1, \dots, \nu_j$.
If there exist elements $a_{j1}, \dots, a_{j\nu_j}\in K$,
not all equal to zero, such that
$\sum_{k_j=1}^{\nu_j} a_{jk_j} \overline{f}^*_{jk_j} =0$,
then $f = \sum_{k_j=1}^{\nu_j} a_{jk_j} f^*_{jk_j}$ is contained
in~$(I_{\bbY_j/\bbX})_{\alpha_{\bbY_j/\bbX}}\setminus\{0\}$,
and we get $\overline{f} = 0$.
This means $f=x_0h \in x_0R_{\alpha_{\bbY_j/\bbX}-1}$
for some $h \in R_{\alpha_{\bbY_j/\bbX}-1}\setminus\{0\}$.
Since the ideal $I_{\bbY_j/\bbX}$ is saturated,
\cite[Lemma~1.2]{Kr1} implies
$h \in I_{\bbY_j/\bbX}\setminus\{0\}$, a contradiction.
Thus we have shown that the set
$\{\overline{f}^*_{j1}, \dots, \overline{f}^*_{j\nu_j}\}$
is $K$-linearly independent.

Consequently, there is a homogeneous $K$-linear map
$\overline{\varphi}_{j1}: \overline{R}\rightarrow K$
of degree $-\alpha_{\bbY_j/\bbX}$ with
$\overline{\varphi}_{j1}(\overline{f}^*_{j1}) \ne 0$
and $\overline{\varphi}_{j1}(\overline{f}^*_{jk_j}) = 0$
for $k_j = 2, \dots, \nu_j$.
Using the epimorphism $\omega_R(1) \twoheadrightarrow
\underline{\Hom}_K(\overline{R},K)$, we can lift
$\overline{\varphi}_{j1}$ to obtain a homogeneous element
$\varphi_{j1} \in (\omega_R)_{-\alpha_{\bbY_j/\bbX}+1}$
with $\varphi_{j1}(f^*_{j1}) \ne 0$ and
$\varphi_{j1}(f^*_{jk_j}) = 0$ for $k_j = 2, \dots, \nu_j$.
Clearly, the set $\{x_0^{r_{\bbX}-\mu(e_{11})}f^*_{11}, \dots,
x_0^{r_{\bbX}-\mu(e_{s\nu_s})}f^*_{s\nu_s} \}$ forms
a $K$-basis of the $K$-vector space~$R_{r_{\bbX}}$.
We write
$\varphi_{j1}(x_0^{r_{\bbX}-\mu(e_{j'k_{j'}})}f^*_{j'k_{j'}})
=c_{j'k_{j'}}x_0^{r_{\bbX}-\alpha_{\bbY_j/\bbX}}$
for all $j' = 1, \dots, s$ and $k_{j'} = 1, \dots, \nu_{j'}$.
By~Proposition~\ref{propSec3.4}, we have
$$
g_{j1} := \Phi(\varphi_{j1})
    = \big({\textstyle\sum\limits_{k_1=1}^{\nu_1}}
    c_{1k_1}e'_{1k_1}T_1^{-\alpha_{\bbY_j/\bbX}}, \dots,
    {\textstyle\sum\limits_{k_s=1}^{\nu_s}}
    c_{sk_s}e'_{sk_s}T_s^{-\alpha_{\bbY_j/\bbX}}
   \big) \in \fC_\bbX^\sigma.
$$
Since $e'_{j1}$ is a unit of~$\calO_{\bbX,p_j}$ and
$c_{j1} \in K\setminus\{0\}$, for $k_j = 1, \dots, \nu_j$ we set
$$
h_{jk_j} = \tilde{\imath}^{-1}((0, \dots, 0,
(e'_{j1}c_{j1})^{-1} e_{jk_j}
T_j^{\alpha_{\bbY_j/\bbX}}, 0, \dots, 0)).
$$
Then $h_{j1}, \dots, h_{j\nu_j} \in I_{\bbY_j/\bbX}
\subseteq \mathfrak{F}_{\widetilde{R}/R}$.
In $\widetilde{R}$, we have
$$
\begin{array}{ll}
  h_{jk_j} \cdot g_{j1}
  & = (0, \dots, 0, (e'_{j1}c_{j1})^{-1}e_{jk_j}
    {\textstyle\sum\limits_{l_j = 1}^{\nu_j}}
    c_{jl_j}{e'}_{jl_j}, 0, \dots, 0) \\
  & = (0, \dots, 0, e_{jk_j}, 0, \dots, 0)
  = \epsilon_{jk_j},
\end{array}
$$
since $c_{j2} = \cdots = c_{j\nu_j} = 0$. Thus we obtain
$\epsilon_{jk_j}\in \mathfrak{F}_{\widetilde{R}/R}
\cdot \fC_\bbX^\sigma$, as was to be shown.
\end{proof}

When we specialize to the case of sets of points,
the condition~(b) of~Theorem~\ref{thmSec4.7}
is satisfied. Therefore we recover the following result
of~A.V.~Geramita~\textit{et~al.}
(see~\cite[Proposition~3.15]{GKR}).

\begin{cor}
Let $\bbX=\{p_1, \dots, p_s\}\subseteq \bbP^n_K$ be a set
of $s$ distinct $K$-rational points.
Then we have $\mathfrak{F}_{\widetilde{R}/R} \cdot \fC_\bbX
= \widetilde{R}$.
\end{cor}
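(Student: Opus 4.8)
The plan is to obtain the equality as an immediate specialization of Theorem~\ref{thmSec4.7}, by checking that hypothesis~(b) of that theorem is automatically satisfied for a set of distinct $K$-rational points. First I would observe that such an $\bbX$ is locally Gorenstein: for each $j$ the local ring $\calO_{\bbX,p_j}$ coincides with the residue field $\kappa(p_j)=K$, which is a field and hence Gorenstein. In particular the Dedekind complementary module $\fC_\bbX$ (taken with respect to the canonical trace map) and the conductor $\mathfrak{F}_{\widetilde{R}/R}$ are defined, and Theorem~\ref{thmSec4.7} becomes applicable once its combinatorial condition is verified. Since the inclusion $\mathfrak{F}_{\widetilde{R}/R}\cdot\fC_\bbX\subseteq\widetilde{R}$ holds in full generality (exactly as in the proof of Proposition~\ref{propSec4.3}), only the reverse inclusion needs to be produced, and for this it suffices to invoke part~(b).

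The substantive step is to identify the auxiliary subschemes $\bbY_j$ and read off their Hilbert functions. Here $\nu_j=\dim_K\calO_{\bbX,p_j}=1$ for every $j$, and the vanishing ideal $I_{\bbY_j}=\bigcap_{k\ne j}I_k$ describes precisely the point set $\bbX\setminus\{p_j\}$; thus $\bbY_j$ is a $p_j$-subscheme of~$\bbX$ of degree $\deg(\bbY_j)=\deg(\bbX)-1$, obtained by deleting the single point $p_j$. For a $p_j$-subscheme of degree $\deg(\bbX)-1$ the Hilbert function was recalled in Section~2 (cf.~\cite[Lemma~1.7]{Kr2}): one has $\alpha_{\bbY_j/\bbX}=\mu_{\bbY_j/\bbX}$ and
$$
\HF_{\bbY_j}(i)=
\begin{cases}
\HF_{\bbX}(i) & \textrm{if }i<\alpha_{\bbY_j/\bbX},\\
\HF_{\bbX}(i)-1 & \textrm{if }i\ge\alpha_{\bbY_j/\bbX}.
\end{cases}
$$
Because $\nu_j=1$, this is exactly the shape required by condition~(b) of Theorem~\ref{thmSec4.7}.

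With condition~(b) in hand, Theorem~\ref{thmSec4.7} yields $\mathfrak{F}_{\widetilde{R}/R}\cdot\fC_\bbX=\widetilde{R}$ directly, which is the assertion, recovering~\cite[Proposition~3.15]{GKR}. I do not anticipate any genuine obstacle: the only point demanding care is the bookkeeping that matches the formal definition $I_{\bbY_j}=\bigcap_{k\ne j}I_k$ with the intuitive ``delete $p_j$'' description, and that confirms simultaneously $\deg(\bbY_j)=\deg(\bbX)-1$ and $\nu_j=1$. Once this identification is settled, the corollary is a one-line specialization of the general Dedekind formula.
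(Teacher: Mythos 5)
Your proposal is correct and matches the paper's own argument: the paper proves this corollary exactly by observing that condition~(b) of Theorem~\ref{thmSec4.7} holds for a set of distinct $K$-rational points, which is what you verify. Your write-up just makes explicit the details the paper leaves implicit, namely that $\nu_j=1$, that $\bbY_j=\bbX\setminus\{p_j\}$, and that the Hilbert function formula for a $p_j$-subscheme of degree $\deg(\bbX)-1$ recalled in Section~2 is precisely the required shape.
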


We end this section with some straightforward consequences
of the theorem.

\begin{cor}\label{corSec4.09}
Let $\bbX \subseteq \bbP^n_K$ be a $0$-dimensional locally
Gorenstein scheme, let $0\le d\le r_\bbX-1$, and let $\sigma$
be a homogeneous trace map of degree zero of $Q^h(R)/L_0$.
If $\bbX$ has CBP($d$), then $\HF_{\delta^\sigma_\bbX}(d)=0$.
\end{cor}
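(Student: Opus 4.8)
The plan is to read off the conclusion directly from the two preceding results, Theorem~\ref{thmSec4.4} and Proposition~\ref{propSec4.3}. Since $\bbX$ is locally Gorenstein, the Dedekind different $\delta^\sigma_\bbX$ is defined and is a homogeneous ideal of~$R$ by Proposition~\ref{propSec2.5}(a). Because $\bbX$ has CBP($d$), Theorem~\ref{thmSec4.4} gives the inclusion $\mathfrak{F}_{\widetilde{R}/R} \subseteq \bigoplus_{i\ge d+1} R_i$.

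Next I would combine this with the right-hand inclusion $\delta^\sigma_\bbX \subseteq \mathfrak{F}_{\widetilde{R}/R}$ from Proposition~\ref{propSec4.3}. Concatenating the two inclusions yields
$$
\delta^\sigma_\bbX \subseteq \mathfrak{F}_{\widetilde{R}/R}
\subseteq {\textstyle\bigoplus\limits_{i\ge d+1}} R_i .
$$
Passing to homogeneous components in degree~$d$, and noting that the degree-$d$ component of $\bigoplus_{i\ge d+1} R_i$ is $\langle 0 \rangle$ (as $d < d+1$), we obtain $(\delta^\sigma_\bbX)_d \subseteq \langle 0 \rangle$, hence $(\delta^\sigma_\bbX)_d = \langle 0 \rangle$. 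By definition of the Hilbert function this gives $\HF_{\delta^\sigma_\bbX}(d) = \dim_K (\delta^\sigma_\bbX)_d = 0$, which is the desired conclusion.

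There is essentially no obstacle to overcome: the statement is a formal consequence of Theorem~\ref{thmSec4.4} together with the inclusion $\delta^\sigma_\bbX \subseteq \mathfrak{F}_{\widetilde{R}/R}$ of Proposition~\ref{propSec4.3}. The only role of the locally Gorenstein hypothesis is to ensure that $\delta^\sigma_\bbX$ is defined and homogeneous and that Proposition~\ref{propSec4.3} applies; both are guaranteed by the hypotheses of the corollary. Accordingly I expect the proof to be a short two- or three-line argument rather than a computation.
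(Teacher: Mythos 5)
Your proof is correct and uses exactly the same two ingredients as the paper's own proof, namely the inclusion $\delta^\sigma_\bbX \subseteq \mathfrak{F}_{\widetilde{R}/R}$ from Proposition~\ref{propSec4.3} and the characterization of CBP($d$) in Theorem~\ref{thmSec4.4}; the only difference is that the paper phrases the argument as a contradiction (a nonzero $h \in (\delta^\sigma_\bbX)_d$ would lie in $(\mathfrak{F}_{\widetilde{R}/R})_d$, contradicting CBP($d$)), while you argue directly by chaining the inclusions and passing to the degree-$d$ component. These are logically the same short argument.
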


\begin{proof}
If $\HF_{\delta^\sigma_\bbX}(d)\ne 0$, then there exists a
non-zero homogeneous element $h$ in~$(\delta^\sigma_\bbX)_{d}$.
Proposition~\ref{propSec4.3} yields that
$h \in (\mathfrak{F}_{\widetilde{R}/R})_d$.
By~Theorem~\ref{thmSec4.4}, the scheme $\bbX$ does not
have CBP($d$), a contradiction.
\end{proof}

\begin{cor}\label{corSec4.10}
Let $\bbX=\{p_1, \dots,p_s\}\subseteq\bbP^n_K$ be a set
of $s$ distinct $K$-rational points,
and for $j = 1, \dots, s$ let $f_j$ be the separator
of~$\bbX\setminus\{p_j\}$ in~$\bbX$
such that $f_j(p_j)=1$ and $f_j(p_k)=0$ for $k\ne j$.
Then $\bbX$ is a Cayley-Bacharach scheme if and only if
$x_0^{r_{\bbX}-2}f_{j} \notin (\delta_\bbX)_{2r_{\bbX}-2}$
for all $j = 1, \dots, s$.
\end{cor}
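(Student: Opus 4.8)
The plan is to derive the equivalence from the two inclusions governing $\delta_{\bbX}=\fC_{\bbX}^{-1}$, treating the points one at a time. Since $\bbX$ has $K$-rational support, a maximal $p_j$-subscheme is $\bbX\setminus\{p_j\}$, and writing $\alpha_j:=\deg_{\bbX}(p_j)=\alpha_{\bbX\setminus\{p_j\}/\bbX}$ the separator $f_j$ factors as $f_j=x_0^{r_{\bbX}-\alpha_j}f_j^*$ with a minimal separator $f_j^*\in R_{\alpha_j}\setminus x_0R_{\alpha_j-1}$; by definition $\bbX$ is a Cayley-Bacharach scheme exactly when $\alpha_j=r_{\bbX}$ for all $j$. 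The computational core is the identity, valid for every homogeneous $g=x_0^{-2r_{\bbX}}\widetilde{g}\in(\fC_{\bbX})_{-n}$ with $\widetilde{g}\in R_{2r_{\bbX}-n}$ and $n\le r_{\bbX}$, obtained from $f_j\widetilde{g}=\widetilde{g}(p_j)\,x_0^{2r_{\bbX}-n}f_j$ (because $f_j$ is a separator):
\[
x_0^{r_{\bbX}-2}f_j\cdot g=\widetilde{g}(p_j)\,x_0^{\,2r_{\bbX}-n-\alpha_j-2}f_j^*.
\]
Membership of $x_0^{r_{\bbX}-2}f_j$ in $\delta_{\bbX}$ is the statement that the right-hand side lies in $R$ for every such $g$.

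For the forward implication I would assume $\bbX$ is a Cayley-Bacharach scheme, so $\alpha_j=r_{\bbX}$ and $f_j=f_j^*$ for all $j$. By \cite[Proposition~3.2]{KL} there is for each $j$ an element $g_j^*=x_0^{-2r_{\bbX}}\widetilde{g}_j^*\in(\fC_{\bbX})_{-r_{\bbX}}$ with $(\widetilde{g}_j^*)_{p_j}$ a unit, i.e.\ $\widetilde{g}_j^*(p_j)\neq 0$. Setting $n=r_{\bbX}$ in the identity above gives $x_0^{r_{\bbX}-2}f_j\cdot g_j^*=\widetilde{g}_j^*(p_j)\,x_0^{-2}f_j$, which is not in $R$ since $f_j=f_j^*\notin x_0R$. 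As $\fC_{\bbX}\cdot\delta_{\bbX}\subseteq R$, this forces $x_0^{r_{\bbX}-2}f_j\notin(\delta_{\bbX})_{2r_{\bbX}-2}$, which is the required conclusion for every $j$.

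For the backward implication I would argue contrapositively. Suppose $\bbX$ is not a Cayley-Bacharach scheme and pick $j$ with $\alpha_j\le r_{\bbX}-1$; I claim $x_0^{r_{\bbX}-2}f_j\in(\delta_{\bbX})_{2r_{\bbX}-2}$. In the identity above, if $n\le r_{\bbX}-1$ then the exponent $2r_{\bbX}-n-\alpha_j-2\ge 0$, so the right-hand side already lies in $R$. The only remaining contributions come from the bottom component $n=r_{\bbX}$, where the exponent is $r_{\bbX}-\alpha_j-2$ and drops to $-1$ precisely when $\alpha_j=r_{\bbX}-1$. To handle this I would prove the key claim that $\widetilde{g}(p_j)=0$ for every $\widetilde{g}\in R_{r_{\bbX}}$ with $x_0^{-2r_{\bbX}}\widetilde{g}\in(\fC_{\bbX})_{-r_{\bbX}}$, which makes the right-hand side vanish and hence lie in $R$. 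Note that $\alpha_j\le r_{\bbX}-1$ is equivalent to $\overline{f}_j=0$ in $\overline{R}_{r_{\bbX}}=(R/x_0R)_{r_{\bbX}}$. Using the explicit $K$-basis $x_0^{-2r_{\bbX}}\widetilde{g}_l^*$ ($1\le l\le\Delta_{\bbX}$) of $(\fC_{\bbX})_{-r_{\bbX}}$ from \cite[Corollary~1.10]{Kr3}, where after renumbering $\{\overline{f}_1,\dots,\overline{f}_{\Delta_{\bbX}}\}$ is a basis of $\overline{R}_{r_{\bbX}}$, $\widetilde{g}_l^*=f_l+\sum_i\beta_{il}f_{\Delta_{\bbX}+i}$, and $\overline{f}_{\Delta_{\bbX}+i}=\sum_l\beta_{il}\overline{f}_l$, the vanishing $\overline{f}_j=0$ forces $j=\Delta_{\bbX}+i_0$ for some $i_0$ and then $\beta_{i_0 l}=0$ for all $l$; evaluating at $p_j$ yields $\widetilde{g}_l^*(p_j)=\beta_{i_0 l}=0$, which proves the claim.

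The main obstacle is exactly this boundary value $\alpha_j=r_{\bbX}-1$: unlike the converse part of Theorem~\ref{thmSec3.5}, which carries one extra factor of $x_0$ and therefore never produces a negative exponent, here the exponent reaches $-1$ and the containment cannot be read off from degrees alone—it genuinely depends on the vanishing $\widetilde{g}(p_j)=0$ supplied by the structure of $(\fC_{\bbX})_{-r_{\bbX}}$. Once the key claim is established, the forward and backward implications combine to give the stated equivalence, and the argument also makes transparent why the sharper test here and the test $x_0^{r_{\bbX}-1}f_j\notin(\delta_{\bbX})_{2r_{\bbX}-1}$ of Corollary~\ref{corSec3.6}(a) detect the same schemes.
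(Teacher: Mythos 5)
Your proof is correct, and its backward direction takes a genuinely different route from the paper's. For the forward direction the paper argues in one line: since $\delta_\bbX$ is an ideal of $R$, membership $x_0^{r_\bbX-2}f_j\in(\delta_\bbX)_{2r_\bbX-2}$ would give $x_0^{r_\bbX-1}f_j\in(\delta_\bbX)_{2r_\bbX-1}$, contradicting Corollary~\ref{corSec3.6}(a); you instead redo the Theorem~\ref{thmSec3.5}-style computation with one lower power of $x_0$, pairing with the element $g_j^*$ from \cite[Proposition~3.2]{KL} and using $f_j\notin x_0R$ --- equally valid, just re-proving what could be cited. The real divergence is in the converse. The paper stays inside the conductor framework of its Section~5: for a non-Cayley-Bacharach point it takes the minimal separator $f_j^*$ of degree $d_j\le r_\bbX-1$, observes $f_j^*\in\mathfrak{F}_{\widetilde{R}/R}$, gets $(f_j^*)^2\in\mathfrak{F}_{\widetilde{R}/R}^2\subseteq\delta_\bbX$ from Proposition~\ref{propSec4.3}, and concludes via the separator identity $(f_j^*)^2=x_0^{d_j}f_j^*$ and multiplication by $x_0^{2r_\bbX-2d_j-2}$, a nonnegative power exactly because $d_j\le r_\bbX-1$. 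You avoid the conductor entirely and verify the defining condition $x_0^{r_\bbX-2}f_j\cdot\fC_\bbX\subseteq R$ degree by degree against the graded components of $\fC_\bbX$: the exponent count settles everything except the boundary case ($n=r_\bbX$ paired with $\alpha_j=r_\bbX-1$), which you resolve with your key claim --- correctly proved from the basis of $(\fC_\bbX)_{-r_\bbX}$ supplied by \cite[Corollary~1.10]{Kr3} --- that every $\widetilde{g}\in R_{r_\bbX}$ with $x_0^{-2r_\bbX}\widetilde{g}\in(\fC_\bbX)_{-r_\bbX}$ vanishes at a point of degree less than $r_\bbX$. The paper's argument is shorter and needs no case distinction, and it exhibits the corollary as a direct dividend of the inclusion $\mathfrak{F}_{\widetilde{R}/R}^2\subseteq\delta_\bbX$; yours is more elementary and self-contained, pinpoints exactly where the bound $2r_\bbX-2$ is tight, and yields as a by-product the structural fact (a refinement of \cite[Proposition~3.2]{KL}) that the sections in $(\fC_\bbX)_{-r_\bbX}$ vanish at all non-Cayley-Bacharach points.
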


\begin{proof}
It is clear that $x_0^{r_{\bbX}-1}f_{j} \in
(\delta_\bbX)_{2r_{\bbX}-1}$
if $x_0^{r_{\bbX}-2}f_{j} \in (\delta_\bbX)_{2r_{\bbX}-2}$.
By~Corollary~\ref{corSec3.6}(a), we get
$x_0^{r_{\bbX}-2}f_{j} \notin (\delta_\bbX)_{2r_{\bbX}-2}$
for every $j \in \{1, \dots, s\}$ if $\bbX$ is
a Cayley-Bacharach scheme.
Conversely, if $\bbX$ is not a Cayley-Bacharach scheme,
we find a minimal separator $f_j^* \in R$ such that
$d_j = \deg(f^*_j) \le r_{\bbX}-1$ and $f_j^*(p_j) = 1$.
Notice that $f_j^* \in \mathfrak{F}_{\widetilde{R}/R}$.
By Proposition~\ref{propSec4.3}, we get
$(f_{j}^*)^2 \in \mathfrak{F}_{\widetilde{R}/R}^2
\subseteq \delta_{\bbX}$.
Moreover, we have $x_0^{d_j}f_{j}^* = (f_{j}^*)^2$
and $f_{j} = x_0^{r_{\bbX}-d_j} f_{j}^* \in R_{r_{\bbX}}$.
This implies that
$x_0^{r_{\bbX}-2}f_{j}
= x_0^{2r_{\bbX}-2d_j-2}(x_0^{d_j}f_{j}^*)
\in (\delta_{\bbX})_{2r_{\bbX}-2}$.
Therefore the proof is complete.
\end{proof}

\medskip\bigbreak
\section{The Trace of the Dedekind Complementary Module}

In this section we let $\bbX$ be a 0-dimensional locally
Gorenstein scheme in~$\bbP^n_K$, let
$\Supp(\bbX)=\{p_1,\dots,p_s\}$, and let $\sigma$ be
a fixed homogeneous trace map of degree zero of
the graded algebra~$Q^h(R)/L_0$.

\begin{defn}
The {\bf trace} of the Dedekind complementary module
$\fC_\bbX^\sigma$, denoted ${\rm tr}(\fC_\bbX^\sigma)$,
is the sum of the ideals $\phi(\fC_\bbX^\sigma)$
with $\phi \in \underline{\Hom}_R(\fC_\bbX^\sigma,R)$, i.e.,
$$
{\rm tr}(\fC_\bbX^\sigma) =
\sum_{\phi \in \underline{\Hom}_R(\fC_\bbX^\sigma,R)}
\varphi(\fC_\bbX^\sigma).
$$
\end{defn}

The following remark collects some basic properties of
${\rm tr}(\fC_\bbX^\sigma)$. For the general theory
of traces of modules we refer to \cite{HHS,Lin16}.

\begin{rem}
Notice that we have $\omega_R(1) \cong \fC_\bbX^\sigma$,
and so ${\rm tr}(\fC_\bbX^\sigma) = {\rm tr}(\omega_R(1))$.
Moreover, there is an isomorphism of graded $R$-modules
$$
\delta_\bbX^\sigma
= R:_R \fC_\bbX^\sigma
\cong \underline{\Hom}_R(\fC_\bbX^\sigma,R)
$$
given by $h \mapsto \mu_h$,
where $\mu_h:\fC_\bbX^\sigma \rightarrow R$ is the multiplication
by~$h$. This implies that
$$
{\rm tr}(\fC_\bbX^\sigma)
= \delta_\bbX^\sigma\cdot \fC_\bbX^\sigma.
$$
In particular, the scheme $\bbX$ is arithmetically Gorenstein
if and only if ${\rm tr}(\fC_\bbX^\sigma) = R$.
\end{rem}

The relation between the trace
${\rm tr}(\fC_\bbX^\sigma)$ and the conductor of $R$
in the graded ring
$\widetilde{R}=\prod_{j=1}^s\calO_{\bbX,p_j}[T_j]$
is given by the following proposition.

\begin{prop} \label{propSec5.1}
Let $\mathfrak{F}_{\widetilde{R}/R}$ be the conductor of~$R$
in~$\widetilde{R}$.

\begin{enumerate}
  \item If $\bbX$ is a Cayley-Bacharach scheme, then
  $\mathfrak{F}_{\widetilde{R}/R}\subseteq
  {\rm tr}(\fC_\bbX^\sigma)$.

  \item The scheme $\bbX$ is a Cayley-Bacharach scheme
  such that $\mathfrak{F}_{\widetilde{R}/R}
  ={\rm tr}(\fC_\bbX^\sigma)$
  if and only if $\bbX$ has minimal Dedekind different.
\end{enumerate}
\end{prop}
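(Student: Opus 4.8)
The plan is to reduce everything to the identity ${\rm tr}(\fC_\bbX^\sigma)=\delta_\bbX^\sigma\cdot\fC_\bbX^\sigma$ recorded in the preceding Remark, combined with three facts already available: the sandwich $\mathfrak{F}_{\widetilde{R}/R}^2\subseteq\delta_\bbX^\sigma\subseteq\mathfrak{F}_{\widetilde{R}/R}$ from Proposition~\ref{propSec4.3}; Dedekind's formula $\mathfrak{F}_{\widetilde{R}/R}\cdot\fC_\bbX^\sigma=\widetilde{R}$ for Cayley-Bacharach schemes from Theorem~\ref{thmSec4.7}; and the observation, made when the conductor was introduced, that $\mathfrak{F}_{\widetilde{R}/R}$ is an ideal of $\widetilde{R}$, so that $\mathfrak{F}_{\widetilde{R}/R}\cdot\widetilde{R}=\mathfrak{F}_{\widetilde{R}/R}$. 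All products are formed inside the commutative ring $Q^h(R)$, where multiplication of fractional ideals is associative, so these identities may be combined freely.

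For part~(a), since $\bbX$ is a Cayley-Bacharach scheme, Theorem~\ref{thmSec4.7} gives $\widetilde{R}=\mathfrak{F}_{\widetilde{R}/R}\cdot\fC_\bbX^\sigma$. Substituting this into $\mathfrak{F}_{\widetilde{R}/R}=\mathfrak{F}_{\widetilde{R}/R}\cdot\widetilde{R}$ and then applying Proposition~\ref{propSec4.3} yields
\[
\mathfrak{F}_{\widetilde{R}/R}
= \mathfrak{F}_{\widetilde{R}/R}\cdot\widetilde{R}
= \mathfrak{F}_{\widetilde{R}/R}^2\cdot\fC_\bbX^\sigma
\subseteq \delta_\bbX^\sigma\cdot\fC_\bbX^\sigma
= {\rm tr}(\fC_\bbX^\sigma),
\]
which is the desired inclusion.

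For part~(b), I would argue both implications through Proposition~\ref{propSec4.5}, which characterizes minimal Dedekind different as the conjunction of the Cayley-Bacharach property and $\mathfrak{F}_{\widetilde{R}/R}^2=\delta_\bbX^\sigma$. If $\bbX$ has minimal Dedekind different, then by Corollary~\ref{corSec3.6}(b) it is a Cayley-Bacharach scheme and by Proposition~\ref{propSec4.5} we have $\mathfrak{F}_{\widetilde{R}/R}^2=\delta_\bbX^\sigma$; feeding Dedekind's formula into the trace identity then gives ${\rm tr}(\fC_\bbX^\sigma)=\delta_\bbX^\sigma\cdot\fC_\bbX^\sigma=\mathfrak{F}_{\widetilde{R}/R}^2\cdot\fC_\bbX^\sigma=\mathfrak{F}_{\widetilde{R}/R}\cdot(\mathfrak{F}_{\widetilde{R}/R}\cdot\fC_\bbX^\sigma)=\mathfrak{F}_{\widetilde{R}/R}\cdot\widetilde{R}=\mathfrak{F}_{\widetilde{R}/R}$. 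Conversely, assume $\bbX$ is a Cayley-Bacharach scheme with $\mathfrak{F}_{\widetilde{R}/R}={\rm tr}(\fC_\bbX^\sigma)=\delta_\bbX^\sigma\cdot\fC_\bbX^\sigma$. Multiplying by $\mathfrak{F}_{\widetilde{R}/R}$ and using Dedekind's formula once more gives $\mathfrak{F}_{\widetilde{R}/R}^2=\delta_\bbX^\sigma\cdot(\mathfrak{F}_{\widetilde{R}/R}\cdot\fC_\bbX^\sigma)=\delta_\bbX^\sigma\cdot\widetilde{R}$. Since $1\in\widetilde{R}$ we have $\delta_\bbX^\sigma\subseteq\delta_\bbX^\sigma\cdot\widetilde{R}$, while Proposition~\ref{propSec4.3} gives $\mathfrak{F}_{\widetilde{R}/R}^2\subseteq\delta_\bbX^\sigma$; these two inclusions force $\delta_\bbX^\sigma\cdot\widetilde{R}=\delta_\bbX^\sigma$, whence $\mathfrak{F}_{\widetilde{R}/R}^2=\delta_\bbX^\sigma$, and Proposition~\ref{propSec4.5} then shows that $\bbX$ has minimal Dedekind different.

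The manipulations are purely formal once the three ingredients are in hand, so I expect no serious obstacle in the algebra itself; the real content sits in the results being invoked, above all Dedekind's formula from Theorem~\ref{thmSec4.7}. The one point that needs care is to ensure the Cayley-Bacharach hypothesis is in force wherever Dedekind's formula is applied: in part~(a) and in the forward direction of part~(b) it is assumed outright, while in the backward direction of part~(b) it is supplied by Corollary~\ref{corSec3.6}(b). I would also verify that every ideal product remains inside $Q^h(R)$ so that associativity and the identity $\mathfrak{F}_{\widetilde{R}/R}\cdot\widetilde{R}=\mathfrak{F}_{\widetilde{R}/R}$ may be used without further comment.
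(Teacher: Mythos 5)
Your argument is correct, but it takes a genuinely different route from the paper's. The paper never invokes Dedekind's formula (Theorem~\ref{thmSec4.7}) in its proof of this proposition: for part~(a) it applies Theorem~\ref{thmSec4.4} to write $\mathfrak{F}_{\widetilde{R}/R}=\bigoplus_{i\ge r_\bbX}R_i$ and then shows $R_{r_\bbX}\subseteq\delta_\bbX^\sigma\cdot\fC_\bbX^\sigma$ by an element computation, pairing elements of $R_{2r_\bbX}\subseteq\delta_\bbX^\sigma$ with the elements $x_0^{-2r_\bbX}\widetilde{g}_j^*\in(\fC_\bbX^\sigma)_{-r_\bbX}$ supplied by \cite[Proposition~3.2]{KL}; for part~(b) it argues degreewise, deriving a contradiction from a nonzero $h\in(\delta_\bbX^\sigma)_{2r_\bbX-1}$ in one direction, and in the other computing $({\rm tr}(\fC_\bbX^\sigma))_{r_\bbX-1}=(\delta_\bbX^\sigma)_{2r_\bbX-1}\cdot(\fC_\bbX^\sigma)_{-r_\bbX}=\langle 0\rangle$ and then quoting part~(a). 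You instead work entirely with fractional-ideal arithmetic in $Q^h(R)$, combining the trace identity ${\rm tr}(\fC_\bbX^\sigma)=\delta_\bbX^\sigma\cdot\fC_\bbX^\sigma$ with Theorem~\ref{thmSec4.7}, the sandwich of Proposition~\ref{propSec4.3}, and the characterization in Proposition~\ref{propSec4.5}; the squeeze $\delta_\bbX^\sigma\subseteq\delta_\bbX^\sigma\cdot\widetilde{R}=\mathfrak{F}_{\widetilde{R}/R}^2\subseteq\delta_\bbX^\sigma$ in your converse direction is the one small trick that has no counterpart in the paper, and it is valid. What your route buys is brevity and a transparent logical structure: the proposition becomes a purely formal consequence of results already proved, with no new element or Hilbert-function computations. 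What the paper's route buys is independence from the heavier Theorem~\ref{thmSec4.7} (only Theorem~\ref{thmSec4.4} is needed) together with finer graded information, namely the vanishing of $({\rm tr}(\fC_\bbX^\sigma))_i$ for $i<r_\bbX$ when the Dedekind different is minimal, which is exactly what its contradiction hinges on. Your closing caveats (Cayley--Bacharach in force wherever Dedekind's formula is used, all products formed inside $Q^h(R)$ where $\mathfrak{F}_{\widetilde{R}/R}\cdot\widetilde{R}=\mathfrak{F}_{\widetilde{R}/R}$ holds because the conductor is an ideal of $\widetilde{R}$ containing~$1\cdot\mathfrak{F}_{\widetilde{R}/R}$) are precisely the right ones, and they are all satisfied.
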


\begin{proof}
Suppose that $\bbX$ is a Cayley-Bacharach scheme.
Then Theorem~\ref{thmSec4.4} yields
$\mathfrak{F}_{\widetilde{R}/R} = \bigoplus_{i\ge r_\bbX} R_i$.
Furthermore, by \cite[Proposition~3.2]{KL},
for every $j\in\{1,\dots,s\}$, we find an element
$x_0^{-2r_{\bbX}}\widetilde{g}^*_j \in
(\fC^{\sigma}_{\bbX})_{-r_{\bbX}}\setminus \{0\}$ such that
$\widetilde{g}^*_j \in R_{r_{\bbX}}$ and
$(\widetilde{g}^*_j)_{p_j}$ is a unit of~$\calO_{\bbX,p_j}$.
It is also clear that
$\bigoplus_{i\ge 2r_\bbX} R_i \subseteq \delta_\bbX^\sigma$.
Hence we have
$R_{r_\bbX} \subseteq \delta_\bbX^\sigma\cdot \fC_\bbX^\sigma
={\rm tr}(\fC_\bbX^\sigma)$, and claim (a) follows.

Now we prove (b).
Assume that $\bbX$ is a Cayley-Bacharach scheme such that
$\mathfrak{F}_{\widetilde{R}/R}={\rm tr}(\fC_\bbX^\sigma)$.
For a contradiction suppose that
$\bbX$ does not have minimal Dedekind different.
This implies $\HF_{\delta_\bbX^\sigma}(2r_\bbX-1)\ne 0$.
Let $h \in (\delta_\bbX^\sigma)_{2r_\bbX-1}\setminus\{0\}$.
Then there is an index $j\in\{1,\dots,s\}$ such that
$h_{p_j}\ne 0$ in~$\calO_{\bbX,p_j}$.
Let $\widetilde{g}^*_j\in R_{r_{\bbX}}$ be given as
in the proof of~(a). Then we have
$(h\widetilde{g}^*_j)_{p_j} \ne 0$ in~$\calO_{\bbX,p_j}$.
It follows that
$0\ne hx_0^{-2r_{\bbX}}\widetilde{g}^*_j
\in ({\rm tr}(\fC_\bbX^\sigma))_{r_\bbX-1}$.
But ${\rm tr}(\fC_\bbX^\sigma)=\mathfrak{F}_{\widetilde{R}/R}
= \bigoplus_{i\ge r_\bbX} R_i$, which is impossible.

Conversely, suppose that the scheme $\bbX$ has
minimal Dedekind different.
Then $\bbX$ is a Cayley-Bacharach scheme
by Corollary~\ref{corSec3.6}(b).
Moreover, the Dedekind different satisfies
$\delta_\bbX^\sigma = \bigoplus_{i\ge 2r_\bbX} R_i$.
It follows that $({\rm tr}(\fC_\bbX^\sigma))_{r_\bbX-1}=
(\delta_\bbX^\sigma)_{2r_\bbX-1}(\fC_\bbX^\sigma)_{-r_\bbX}=
\langle 0\rangle$.
Therefore the equality
$\mathfrak{F}_{\widetilde{R}/R} ={\rm tr}(\fC_\bbX^\sigma)$
follows from claim~(a).
\end{proof}

\begin{exam}
Let $\bbX=\{p_1,\dots,p_9\}\subseteq \bbP^3_\bbQ$
be the set of nine points given in Example~\ref{examSec2.10}.
We saw that $\HF_{\bbX}:\ 1\ 4\ 9\ 9\cdots$ and $r_\bbX=2$.
Moreover, $\bbX$ has minimal Dedekind different,
and so it is a Cayley-Bacharach scheme.
In addition, we have
${\rm tr}(\fC_\bbX) = \bigoplus_{i\ge 2} R_i$
by Proposition~\ref{propSec5.1}(b).
\end{exam}

In view of the theory of nearly and almost Gorenstein rings
given in the papers \cite{BF97,GTT,HHS}, we introduce
the following two special classes of 0-dimensional schemes
in~$\bbP^n_K$. Note that $\mathfrak{m}$ denotes the
homogeneous maximal ideal of~$R$.

\begin{defn}
Let $\bbX$ be a 0-dimensional locally Gorenstein scheme
in~$\bbP^n_K$.
\begin{enumerate}
  \item The scheme $\bbX$ is called a {\bf nearly Gorenstein
  scheme} if $\mathfrak{m}\subseteq {\rm tr}(\fC_\bbX^\sigma)$.

  \item The scheme $\bbX$ is called an {\bf almost Gorenstein
  scheme} if there is an exact sequence of graded $R$-modules
  $$
  0 \longrightarrow R \longrightarrow \fC_\bbX^\sigma(-r_\bbX)
  \longrightarrow C \longrightarrow 0
  $$
  with $\mathfrak{m}\cdot C = \langle 0\rangle$.
\end{enumerate}
\end{defn}

Note that every arithmetically Gorenstein scheme $\bbX$
is nearly Gorenstein and almost Gorenstein, and that
$\bbX$ is a Cayley-Bacharach scheme if it is an almost
Gorenstein scheme (since there exists an element
$g\in (\fC_\bbX^\sigma)_{-r_\bbX}$ with
${\rm Ann}_R(g)=\langle 0\rangle$).

In our setting, the class of almost Gorenstein schemes is smaller
than that of nearly Gorenstein schemes. The following proof of
this property mimics the proof of~\cite[Proposition~6.1]{HHS}
for local rings.

\begin{prop}\label{propSec5.06}
If $\bbX$ is an almost Gorenstein scheme, then
it is a nearly Gorenstein scheme and
$\HF_{\delta_\bbX^\sigma}(r_\bbX+1)=\HF_\bbX(1)$.
\end{prop}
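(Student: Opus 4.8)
The plan is to read off an explicit generator from the defining exact sequence and to prove that it becomes invertible in $Q^h(R)$. Write the inclusion in the sequence as $\phi\colon R\hookrightarrow\fC_\bbX^\sigma(-r_\bbX)$ and set $g:=\phi(1)\in(\fC_\bbX^\sigma)_{-r_\bbX}$, so that $\phi(f)=fg$ and the injectivity of $\phi$ says exactly $\mathrm{Ann}_R(g)=\langle 0\rangle$ (this is the element already mentioned after the definition of almost Gorenstein). First I would argue that $g$ is in fact a \emph{unit} of $Q^h(R)$. Writing $g=h\,x_0^{-k}$ with $h\in R$ homogeneous, the fact that $x_0$ is a non-zerodivisor gives $\mathrm{Ann}_R(h)=\mathrm{Ann}_R(g)=\langle 0\rangle$, so $h$ is a homogeneous non-zerodivisor of $R$. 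Under the isomorphism $Q^h(R)\cong\prod_{j=1}^s\calO_{\bbX,p_j}[T_j,T_j^{-1}]$ each component of $h$ is then a non-zerodivisor of $\calO_{\bbX,p_j}$; since $\bbX$ is locally Gorenstein, each $\calO_{\bbX,p_j}$ is Artinian local, where every non-zerodivisor is a unit. Hence $h$, and therefore $g$, is a unit of $Q^h(R)$. I expect this to be the main obstacle, since it is the one place where the local Gorenstein hypothesis and the product description of $Q^h(R)$ are genuinely used; everything afterwards is formal.

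Once $g$ is known to be a unit, the nearly Gorenstein property follows directly. The condition $\fm\cdot C=\langle 0\rangle$ translates into $\fm\,\fC_\bbX^\sigma\subseteq Rg$, while $g\in\fC_\bbX^\sigma$ gives $Rg\subseteq\fC_\bbX^\sigma$. Multiplying $\fm\,\fC_\bbX^\sigma\subseteq Rg$ by $g^{-1}$ and using $\delta_\bbX^\sigma=\{f\in Q^h(R)\mid f\,\fC_\bbX^\sigma\subseteq R\}$ yields $g^{-1}\fm\subseteq\delta_\bbX^\sigma$; on the other hand $g\in\fC_\bbX^\sigma$ together with $\delta_\bbX^\sigma\,\fC_\bbX^\sigma\subseteq R$ gives $g\,\delta_\bbX^\sigma\subseteq R$. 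Together these read $\fm\subseteq g\,\delta_\bbX^\sigma\subseteq R$. Invoking the identity ${\rm tr}(\fC_\bbX^\sigma)=\delta_\bbX^\sigma\cdot\fC_\bbX^\sigma$ from the remark preceding the proposition, I obtain $\fm=(g^{-1}\fm)\,g\subseteq\delta_\bbX^\sigma\,\fC_\bbX^\sigma={\rm tr}(\fC_\bbX^\sigma)$, which is precisely $\fm\subseteq{\rm tr}(\fC_\bbX^\sigma)$.

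For the Hilbert-function value I would exploit that $g\,\delta_\bbX^\sigma$ is a homogeneous ideal of $R$ with $\fm\subseteq g\,\delta_\bbX^\sigma\subseteq R$; since the only ideals containing $\fm$ are $\fm$ and $R$, it follows that $g\,\delta_\bbX^\sigma\in\{\fm,R\}$, and hence $\delta_\bbX^\sigma\in\{g^{-1}\fm,\ g^{-1}R\}$ because multiplication by the unit $g^{-1}$, which has degree $r_\bbX$, is a bijection on graded submodules of $Q^h(R)$. For $M\in\{\fm,R\}$ this gives $\HF_{\delta_\bbX^\sigma}(i)=\HF_{g^{-1}M}(i)=\HF_M(i-r_\bbX)$, and evaluating at $i=r_\bbX+1$ yields either $\HF_\fm(1)$ or $\HF_R(1)$. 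Both equal $\HF_\bbX(1)$, since $\fm_1=R_1$. This establishes $\HF_{\delta_\bbX^\sigma}(r_\bbX+1)=\HF_\bbX(1)$ and completes the proof.
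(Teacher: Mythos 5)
Your proof is correct, but it reaches the conclusion by a genuinely different mechanism than the paper. The paper mimics \cite[Proposition~6.1]{HHS}: it first disposes of the arithmetically Gorenstein case by citing \cite[Proposition~5.8]{KL}, then, in the remaining case, applies $\underline{\Hom}_R(-,R)$ to the defining sequence, uses $\underline{\Hom}_R(C,R)=\langle 0\rangle$ and $\fm\cdot{\rm \underline{Ext}}^1_R(C,R)=\langle 0\rangle$ to obtain $\fm\subseteq\delta_\bbX^\sigma\cdot\theta(1)$, and finally invokes Remark~\ref{remSec3.9} (which gives $\HF_{\delta_\bbX^\sigma}(r_\bbX)=0$ in the non-Gorenstein case) to force the equality $\fm=\delta_\bbX^\sigma\cdot\theta(1)$, from which the value at $r_\bbX+1$ follows. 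You obtain the same key inclusions $\fm\subseteq g\,\delta_\bbX^\sigma\subseteq R$ with no homological algebra at all, by observing that $g=\phi(1)$ is a homogeneous non-zerodivisor and hence invertible in $Q^h(R)$, so that $\fm\cdot\fC_\bbX^\sigma\subseteq Rg$ can simply be multiplied by $g^{-1}$; note that this invertibility is immediate from the definition of $Q^h(R)$ as the localization of $R$ at its homogeneous non-zerodivisors, so your detour through the product of Artinian local rings (whose Artinian-ness, incidentally, comes from $\dim\bbX=0$, not from the Gorenstein hypothesis) can be shortened. Your endgame is also slicker than the paper's: since $g\,\delta_\bbX^\sigma$ is a homogeneous ideal squeezed between the maximal ideal $\fm$ and $R$, it equals $\fm$ or $R$, and because $\HF_\fm(1)=\HF_R(1)=\HF_\bbX(1)$ you never need to decide which; this treats the arithmetically Gorenstein and non-Gorenstein cases uniformly and avoids both external citations. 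What the paper's homological route buys in exchange is portability: it is the argument of \cite{HHS} for local rings and does not depend on realizing $\fC_\bbX^\sigma$ as a fractional ideal containing an invertible element in degree $-r_\bbX$, whereas your argument leans on the $0$-dimensional setting through the invertibility of $g$ in $Q^h(R)$.
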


\begin{proof}
If $\bbX$ is arithmetically Gorenstein, we have
$\HF_{\delta_\bbX^\sigma}(r_\bbX+1)=\HF_\bbX(1)$
by \cite[Proposition~5.8]{KL}.
So, we may assume that $\bbX$ is not
arithmetically Gorenstein.
Then $C\ne \langle 0\rangle$ and
$\mathfrak{m}\cdot C= \langle 0\rangle$, and so
$\underline{\Hom}_R(C,R)=\langle 0\rangle$.
By applying the functor $\underline{\Hom}_R(-,R)$
to the homogeneous exact sequence
$$
0 \longrightarrow R
\stackrel{\theta}{\longrightarrow}
\fC_\bbX^\sigma(-r_\bbX) \longrightarrow
C \longrightarrow 0
$$
we get the exact sequence
$$
0 \longrightarrow \delta_\bbX^\sigma(r_\bbX)
\stackrel{\theta^*}{\longrightarrow}  R
\longrightarrow {\rm \underline{Ext}}^1_R(C,R).
$$
Here the map $\theta^*:
\delta_\bbX^\sigma(r_\bbX) \rightarrow R$
is given by $h\mapsto h\theta(1)$ and
$\deg(\theta(1))=-r_\bbX$.  Also, we have
$\mathfrak{m}\cdot{\rm\underline{Ext}}^1_R(C,R)
= \langle0\rangle$.
This implies $\mathfrak{m} \subseteq
\delta_\bbX^\sigma\cdot \theta(1) \subseteq
\delta_\bbX^\sigma\cdot \fC_\bbX^\sigma
={\rm tr}(\fC_\bbX^\sigma)$, and so
$\bbX$ is a nearly Gorenstein scheme.
Moreover, Remark~\ref{remSec3.9} yields
$\HF_{\delta_\bbX^\sigma}(r_\bbX)=0$,
and so we have
$\mathfrak{m} = \delta_\bbX^\sigma\cdot\theta(1)$.
Consequently, we get
$\HF_{\delta_\bbX^\sigma}(r_\bbX+1)
= \HF_{\delta_\bbX^\sigma\cdot \theta(1)}(1)
= \HF_{\mathfrak{m}}(1) =\HF_\bbX(1)$,
since ${\rm Ann}_R(\theta(1))=\langle 0\rangle$.
\end{proof}

Notice that every nearly Gorenstein scheme $\bbX$
satisfies $\HF_{\delta_\bbX^\sigma}(r_\bbX+1)\ne 0$,
because otherwise we would have
$\mathfrak{m}_1 \nsubseteq
({\rm tr}(\fC_\bbX^\sigma))_1=\langle 0\rangle$.
Hence this implies the following corollary.

\begin{cor}
If $\bbX$ has minimal Dedekind different and $r_\bbX\ge 2$,
then it is not a nearly Gorenstein scheme.
\end{cor}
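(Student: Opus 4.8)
The plan is to combine the definition of minimal Dedekind different with the numerical observation made immediately before the corollary. First I would recall that $\bbX$ having minimal Dedekind different means, by definition, that $\delta_\bbX^\sigma = \bigoplus_{i \ge 2r_\bbX} R_i$. In particular this gives $\HF_{\delta_\bbX^\sigma}(i) = 0$ for every $i < 2r_\bbX$, by Proposition~\ref{propSec2.5} together with the equality defining minimality.

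Next I would exploit the hypothesis $r_\bbX \ge 2$ purely arithmetically: since $r_\bbX$ is an integer with $r_\bbX > 1$, we have $r_\bbX + 1 \le 2r_\bbX - 1 < 2r_\bbX$. Feeding the index $i = r_\bbX + 1$ into the vanishing just recorded yields $\HF_{\delta_\bbX^\sigma}(r_\bbX + 1) = 0$. Finally I would invoke the remark immediately preceding this corollary, which establishes that every nearly Gorenstein scheme must satisfy $\HF_{\delta_\bbX^\sigma}(r_\bbX + 1) \ne 0$ (otherwise $\mathfrak{m}_1 \nsubseteq ({\rm tr}(\fC_\bbX^\sigma))_1 = \langle 0 \rangle$, contradicting $\mathfrak{m} \subseteq {\rm tr}(\fC_\bbX^\sigma)$). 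Since we have just shown the opposite value $\HF_{\delta_\bbX^\sigma}(r_\bbX+1) = 0$, the scheme $\bbX$ cannot be nearly Gorenstein, which is the assertion.

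I do not expect any genuine obstacle here; the statement is a direct consequence of the definition and the preceding remark, so the work is essentially the single inequality $r_\bbX + 1 < 2r_\bbX$. The only point worth flagging is that this inequality is exactly where the assumption $r_\bbX \ge 2$ enters, and it is sharp: when $r_\bbX = 1$ one has $r_\bbX + 1 = 2r_\bbX$, whence $\HF_{\delta_\bbX^\sigma}(r_\bbX+1) = \HF_{\delta_\bbX^\sigma}(2r_\bbX) = \deg(\bbX) \ne 0$ even under minimality, so the conclusion fails and the hypothesis $r_\bbX \ge 2$ cannot be omitted.
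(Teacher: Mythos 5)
Your proof is correct and is exactly the argument the paper intends: the remark preceding the corollary gives $\HF_{\delta_\bbX^\sigma}(r_\bbX+1)\ne 0$ for nearly Gorenstein schemes, while minimality of the Dedekind different together with $r_\bbX+1<2r_\bbX$ forces $\HF_{\delta_\bbX^\sigma}(r_\bbX+1)=0$. Your sharpness observation for $r_\bbX=1$ is also consistent with the paper, which shows such schemes (of degree $>2$) have minimal Dedekind different yet are almost, hence nearly, Gorenstein.
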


It is natural to ask: If $\bbX$ is a nearly Gorenstein scheme,
when is $\bbX$ an almost Gorenstein scheme?
In the case that
$\Delta_\bbX = \deg(\bbX)-\HF_\bbX(r_\bbX-1)=1$,
we have the following answer to this question.

\begin{prop}\label{propSec5.5}
Let $\bbX$ be a 0-dimensional locally Gorenstein scheme
in~$\bbP^n_K$ such that $\Delta_\bbX=1$.
Then the following conditions are equivalent.
\begin{enumerate}
  \item $\bbX$ is an almost Gorenstein scheme.
  \item $\bbX$ is a nearly Gorenstein Cayley-Bacharach scheme.
\end{enumerate}
\end{prop}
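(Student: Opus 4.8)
The plan is to prove the two implications separately, with the hypothesis $\Delta_\bbX=1$ entering only in the direction (b)$\Rightarrow$(a). The implication (a)$\Rightarrow$(b) is immediate and needs no hypothesis on $\Delta_\bbX$: Proposition~\ref{propSec5.06} shows that an almost Gorenstein scheme is nearly Gorenstein, and the remark following the definition of almost Gorenstein schemes shows that it is a Cayley-Bacharach scheme. For (b)$\Rightarrow$(a), I first dispose of the arithmetically Gorenstein case, where the cokernel $C$ can be taken to be $\langle 0\rangle$ and $\bbX$ is trivially almost Gorenstein, so I assume from now on that $\bbX$ is not arithmetically Gorenstein.

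Since $\bbX$ is Cayley-Bacharach, Remark~\ref{remSec3.9} gives $\HF_{\delta_\bbX^\sigma}(r_\bbX)=0$. The hypothesis $\Delta_\bbX=1$ forces $\dim_K(\fC_\bbX^\sigma)_{-r_\bbX}=\deg(\bbX)-\HF_\bbX(r_\bbX-1)=1$, so this bottom component is spanned by a single element $g$. Using the Cayley-Bacharach property and \cite[Proposition~3.2]{KL} exactly as in the proof of Theorem~\ref{thmSec3.5}, for each $j$ there is an element of $(\fC_\bbX^\sigma)_{-r_\bbX}$ of the form $x_0^{-2r_\bbX}\widetilde{g}^*_j$ with $(\widetilde{g}^*_j)_{p_j}$ a unit of $\calO_{\bbX,p_j}$; since the space is one-dimensional, each such element is a non-zero scalar multiple of $g$, and therefore $g=x_0^{-2r_\bbX}\widetilde{g}$ with $\widetilde{g}_{p_j}$ a unit for every $j$. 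Consequently $g$ is a unit of $Q^h(R)=\prod_j\calO_{\bbX,p_j}[T_j,T_j^{-1}]$, so $\mathrm{Ann}_R(g)=\langle 0\rangle$ and the multiplication map $\theta\colon R\to\fC_\bbX^\sigma(-r_\bbX)$, $1\mapsto g$, is an injective homomorphism of graded $R$-modules. Writing $C$ for its cokernel, I obtain the exact sequence required in the definition of an almost Gorenstein scheme, and it only remains to verify $\fm\cdot C=\langle 0\rangle$.

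I now translate this last condition. Because $g$ is a unit, $\fm\cdot C=\langle 0\rangle$ is equivalent to $\fm\cdot\fC_\bbX^\sigma\subseteq Rg$, and multiplying by $g^{-1}$ this becomes $\fm\subseteq (R:_{Q^h(R)}g^{-1}\fC_\bbX^\sigma)=\delta_\bbX^\sigma g$; here $\delta_\bbX^\sigma g\subseteq R$ is a homogeneous ideal contained in $\fm$, since $g$ being a unit gives $\delta_\bbX^\sigma g\cong\delta_\bbX^\sigma(r_\bbX)$ and hence $\dim_K(\delta_\bbX^\sigma g)_0=\HF_{\delta_\bbX^\sigma}(r_\bbX)=0$. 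On the other hand, as $\bbX$ is not arithmetically Gorenstein we have ${\rm tr}(\fC_\bbX^\sigma)\neq R$ by the trace characterization of arithmetically Gorenstein schemes, so $1\notin{\rm tr}(\fC_\bbX^\sigma)$ and thus ${\rm tr}(\fC_\bbX^\sigma)\subseteq\fm$; combined with the nearly Gorenstein hypothesis $\fm\subseteq{\rm tr}(\fC_\bbX^\sigma)$ this yields the clean identity ${\rm tr}(\fC_\bbX^\sigma)=\delta_\bbX^\sigma\fC_\bbX^\sigma=\fm$. Since $\delta_\bbX^\sigma g\subseteq\delta_\bbX^\sigma\fC_\bbX^\sigma$, the problem reduces to upgrading $\delta_\bbX^\sigma\fC_\bbX^\sigma=\fm$ to $\delta_\bbX^\sigma g=\fm$; equivalently, to showing that the ideal $\delta_\bbX^\sigma g$ is stable under multiplication by the fractional ideal $g^{-1}\fC_\bbX^\sigma$, whose degree-zero component is $K$ by $\Delta_\bbX=1$.

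This last step is the main obstacle: a priori the trace $\delta_\bbX^\sigma\fC_\bbX^\sigma$ involves all of $\fC_\bbX^\sigma$, whereas $\delta_\bbX^\sigma g$ only sees its bottom component $Kg$, and the discrepancy is measured precisely by $C$. To control it I plan to apply $\underline{\Hom}_R(-,R)$ to $0\to R\xrightarrow{\theta}\fC_\bbX^\sigma(-r_\bbX)\to C\to 0$, which, using the isomorphism $\underline{\Hom}_R(\fC_\bbX^\sigma,R)\cong\delta_\bbX^\sigma$ together with $\underline{\Hom}_R(C,R)=\langle 0\rangle$ (as $C$ has finite length while $R$ has positive depth), produces the exact sequence $0\to R/\delta_\bbX^\sigma g\to{\rm \underline{Ext}}^1_R(C,R)\to{\rm \underline{Ext}}^1_R(\fC_\bbX^\sigma(-r_\bbX),R)\to\langle 0\rangle$. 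Thus $\fm\subseteq\delta_\bbX^\sigma g$ will follow once I establish $\fm\cdot{\rm \underline{Ext}}^1_R(C,R)=\langle 0\rangle$. Here I expect to use that $\bbX$ is locally Gorenstein, so that $\fC_\bbX^\sigma\cong\omega_R(1)$ is locally free of rank one away from $\fm$ and both Ext-modules have finite length supported only at $\fm$, together with the identity ${\rm tr}(\fC_\bbX^\sigma)=\fm$ governing the non-Gorenstein locus and the formula $\HF_C(i)=\deg(\bbX)-\HF_\bbX(r_\bbX-1-i)-\HF_\bbX(i)$, which under $\Delta_\bbX=1$ confines $C$ to the degrees $1\le i\le r_\bbX-2$. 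The heart of the argument, and the place where $\Delta_\bbX=1$ is indispensable, is exactly this annihilation statement; concretely it amounts to the degree-by-degree equality $\HF_{\delta_\bbX^\sigma}(r_\bbX+i)=\HF_\bbX(i)$ for $1\le i\le r_\bbX-1$, of which the inequality ``$\le$'' is already supplied by the inclusion $\delta_\bbX^\sigma g\subseteq\fm$.
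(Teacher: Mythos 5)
Your argument is correct and essentially parallel to the paper's right up to the decisive step, and then it stops being a proof. Handling (a)$\Rightarrow$(b) via Proposition~\ref{propSec5.06}, disposing of the arithmetically Gorenstein case, producing the spanning element $g$ of $(\fC_\bbX^\sigma)_{-r_\bbX}$ with unit germs (your direct argument here is a legitimate substitute for the paper's citation of \cite{KL}), and reducing $\fm\cdot C=\langle 0\rangle$ to the ideal equality $\fm=\delta_\bbX^\sigma g$, given ${\rm tr}(\fC_\bbX^\sigma)=\delta_\bbX^\sigma\fC_\bbX^\sigma=\fm$: all of this is fine. But at exactly the point you yourself call ``the main obstacle'' you switch from proving to planning. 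You propose to dualize the sequence and to deduce $\fm\subseteq\delta_\bbX^\sigma g$ from $\fm\cdot{\rm \underline{Ext}}^1_R(C,R)=\langle 0\rangle$, yet you offer no argument for that annihilation. This is not a smaller problem than the one you started with: in Proposition~\ref{propSec5.06} (following \cite{HHS}) the Ext sequence is used to derive consequences \emph{from} $\fm\cdot C=\langle 0\rangle$, the implication $\fm\cdot C=0\Rightarrow\fm\cdot{\rm \underline{Ext}}^1_R(C,R)=0$ being functoriality; you need the reverse direction, which is unsupported. Likewise your ``concrete'' reformulation $\HF_{\delta_\bbX^\sigma}(r_\bbX+i)=\HF_\bbX(i)$ for $1\le i\le r_\bbX-1$ is a restatement of the goal, of which you only have the inequality $\le$. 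So there is a genuine gap, and it sits at the heart of the proof.

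What closes the gap is a short multiplicative argument that your setup already makes available, and it is exactly the paper's proof. From ${\rm tr}(\fC_\bbX^\sigma)=\fm$ one gets $\fm_1=(\delta_\bbX^\sigma)_{r_\bbX+1}\cdot g$ by degree reasons: in $(\delta_\bbX^\sigma\fC_\bbX^\sigma)_1=\sum_i(\delta_\bbX^\sigma)_i(\fC_\bbX^\sigma)_{1-i}$ the components $(\delta_\bbX^\sigma)_i$ vanish for $i\le r_\bbX$ (Cayley-Bacharach plus not arithmetically Gorenstein, Proposition~\ref{propSec3.8} and Remark~\ref{remSec3.9}) and $(\fC_\bbX^\sigma)_j$ vanishes for $j<-r_\bbX$, so the only surviving term is $(\delta_\bbX^\sigma)_{r_\bbX+1}\cdot(\fC_\bbX^\sigma)_{-r_\bbX}=(\delta_\bbX^\sigma)_{r_\bbX+1}\cdot Kg$, where $\Delta_\bbX=1$ enters. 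Now take $\ell\in\fm_1$, write $\ell=hg$ with $h\in(\delta_\bbX^\sigma)_{r_\bbX+1}$, and take any $g'\in(\fC_\bbX^\sigma(-r_\bbX))_i$ with $i\ge0$. Then $\ell g'=(hg')g$, and $hg'\in R_{i+1}\subseteq\fm$ because $h\in\delta_\bbX^\sigma$ multiplies $\fC_\bbX^\sigma$ into $R$. Hence $\fm_1\cdot\fC_\bbX^\sigma(-r_\bbX)\subseteq\fm\cdot g$, and since $\fm$ is generated in degree one, $\fm\cdot\fC_\bbX^\sigma(-r_\bbX)\subseteq Rg$, i.e.\ $\fm\cdot C=\langle 0\rangle$. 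This one-line associativity trick is where the nearly Gorenstein hypothesis and $\Delta_\bbX=1$ actually do their work; it replaces your entire Ext detour.
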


\begin{proof}
It suffices to prove the implication ``(b)$\Rightarrow$(a)''.
Suppose that $\bbX$ is a nearly Gorenstein Cayley-Bacharach scheme.
We may assume that $\bbX$ is not arithmetically Gorenstein.
By \cite[Proposition~5.8]{KL}, we have
$(\delta_\bbX^\sigma)_{r_\bbX} = \langle 0\rangle$.
Since $\bbX$ is nearly Gorenstein, we have
$\mathfrak{m} = {\rm tr}(\fC_\bbX^\sigma)$.
This implies
$$
(\delta_\bbX^\sigma)_{r_\bbX+1}\cdot (\fC_\bbX^\sigma)_{-r_\bbX}
= \mathfrak{m}_1.
$$
Since $\bbX$ is a Cayley-Bacharach scheme
and $\Delta_\bbX =1$, \cite[Proposition~4.12]{KL} shows that
there exists an element
$g\in (\fC_\bbX^\sigma)_{-r_\bbX}$ such that
$(\fC_\bbX^\sigma)_{-r_\bbX} = \langle g\rangle_K$
and ${\rm Ann}_R(g)=\langle 0\rangle$.
Hence we have
$(\delta_\bbX^\sigma)_{r_\bbX+1}\cdot g = \mathfrak{m}_1$.
Consider the exact sequence of graded $R$-modules
$$
0 \longrightarrow R
\stackrel{\theta}{\longrightarrow}
\fC_\bbX^\sigma(-r_\bbX) \longrightarrow
C \longrightarrow 0
$$
where $\theta: R\rightarrow \fC_\bbX^\sigma(-r_\bbX)$
is the injection given by $1\mapsto g$ and
$C = \fC_\bbX^\sigma(-r_\bbX)/ \langle g\rangle_R$.
Now we want to show that $\mathfrak{m}\cdot C = 0$.
Clearly, $\mathfrak{m}\cdot C = 0$ if and only if
$\mathfrak{m}\cdot\fC_\bbX^\sigma(-r_\bbX)
= \mathfrak{m}\cdot g$.
This is equivalent to
$\mathfrak{m}_1\cdot\fC_\bbX^\sigma(-r_\bbX)
\subseteq \mathfrak{m}\cdot g$.
Let $i\ge 0$, $g' \in (\fC_\bbX^\sigma(-r_\bbX))_i$,
and $\ell \in \mathfrak{m}_1$.
Set $\ell = h \cdot g$ with $h\in (\delta_\bbX^\sigma)_{r_\bbX+1}$,
since $(\delta_\bbX^\sigma)_{r_\bbX+1}\cdot g = \mathfrak{m}_1$.
We have
$$
\ell\cdot g' = h \cdot g \cdot g' = (h \cdot g') \cdot g.
$$
Since $h\cdot g' \in R_1$,
we get $\ell\cdot g' \in \mathfrak{m}\cdot g$.
It follows that $\mathfrak{m}_1\cdot\fC_\bbX^\sigma(-r_\bbX)
\subseteq \mathfrak{m}\cdot g$, and hence
$\mathfrak{m}\cdot C = 0$, as desired.
\end{proof}

Let us apply this proposition to an explicit example.

\begin{exam}\label{examS5.07}
Let $\bbX=\{p_1,\dots,p_7\}\subseteq \bbP^2_\bbQ$
be the set of seven points given by
$p_1= (1:0:0)$
$p_2=(1:1:0)$,
$p_3=(1:0:1)$,
$p_4=(1:1:1)$,
$p_5=(1:0:2)$,
$p_6=(1:2:1)$,
and $p_7=(1:2:2)$.
Sketch $\bbX$ in the affine plane $D_+(X_0)$ as follows:
$$
{\scriptstyle
\begin{array}{ccccc}
(0,2)& \bullet &         & \bullet & (2,2)\\
     & \bullet & \bullet & \bullet &  \\
(0,0)& \bullet & \bullet & &  (2,0)
\end{array}}
$$
The Hilbert function of $\bbX$ is
$\HF_\bbX:\ 1\ 3\ 6\ 7\ 7\cdots$
and $r_\bbX=3$. We also have $\Delta_\bbX =1$ and
the scheme $\bbX$ is a Cayley-Bacharach scheme.
A calculation yields
$$
\begin{array}{ll}
  \HF_{\delta_\bbX} & :\ 0\ 0\ 0\ 0\ 3\ 6\ 7\ 7\cdots \\
  \HF_{\fC_\bbX(-r_\bbX)} & :\ 1\ 4\ 6\ 7 \ 7\cdots.
\end{array}
$$
Since $\HF_{\delta_\bbX}(r_\bbX)=\HF_{\delta_\bbX}(3)= 0$,
the scheme $\bbX$ is not arithmetically Gorenstein.
Furthermore, we have
$$
({\rm tr}(\fC_\bbX))_1
=(\delta_\bbX)_{r_\bbX+1}\cdot(\fC_\bbX)_{-r_\bbX}
=(\delta_\bbX)_{4}\cdot(\fC_\bbX)_{-3}
=\mathfrak{m}_1.
$$
Hence $\bbX$ is a nearly Gorenstein scheme.
An application of Proposition~\ref{propSec5.5}
implies that $\bbX$ is an almost Gorenstein scheme.
In this case we do not have $\fC_\bbX =
\langle (\fC_\bbX)_{-r_\bbX}\rangle_R$,
since
$$
\dim_K (\fC_\bbX(-3))_1 =4>3
=\dim_K (\mathfrak{m}_1)
= \dim_K ((\fC_\bbX(-3))_0\mathfrak{m}_1).
$$
Moreover, if we let $p'_7 =(1:2:0)$ and
$\bbY=\{p_1,\dots,p_6, p'_7\}\subseteq \bbP^2_\bbQ$,
then the set $\bbY$ satisfies $\HF_\bbY =\HF_\bbX$, but
it is not an almost Gorenstein scheme,
since it is not a Cayley-Bacharach scheme.
\end{exam}

When $\bbX$ is a Cayley-Bacharach scheme,
the following proposition provides a necessary
and sufficient condition for $\bbX$ to be almost
Gorenstein.

\begin{prop}\label{propSec5.8}
Let $K$ be an infinite field, and let
$\bbX$ be a 0-dimensional locally Gorenstein
scheme in~$\bbP^n_K$.
Suppose that $\bbX$ is a Cayley-Bacharach scheme.
\begin{enumerate}
  \item We have $\HF_{\delta_\bbX^\sigma}(i)\le\HF_\bbX(i-r_\bbX)$
  for all $i\in\bbZ$.
  In particular, the scheme $\bbX$ is arithmetically Gorenstein
  if and only if $\HF_{\delta_\bbX^\sigma}(i)=\HF_\bbX(i-r_\bbX)$
  for all $i\in\bbZ$.

  \item $\bbX$ is an almost Gorenstein scheme
  if and only if $\HF_{\delta_\bbX^\sigma}(r_\bbX+1)=\HF_\bbX(1)$.
\end{enumerate}
\end{prop}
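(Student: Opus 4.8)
The plan is to deduce both parts from a single construction: to produce a homogeneous element $g\in(\fC^\sigma_\bbX)_{-r_\bbX}$ whose annihilator in $R$ vanishes, and then to extract everything from multiplication by $g$ together with the identity ${\rm tr}(\fC^\sigma_\bbX)=\delta^\sigma_\bbX\cdot\fC^\sigma_\bbX$. First I would establish this element. Since $\bbX$ is a Cayley-Bacharach scheme, \cite[Proposition~3.2]{KL} provides for each $p_j$ an element $g^*_j\in(\fC^\sigma_\bbX)_{-r_\bbX}$ whose germ at $p_j$ is a unit of $\calO_{\bbX,p_j}$. By Proposition~\ref{propSec3.4} an element of $(\fC^\sigma_\bbX)_{-r_\bbX}$ is determined by its germs $a_j\in\calO_{\bbX,p_j}$, and its $R$-annihilator is trivial exactly when every $a_j$ is a unit, i.e. $a_j\notin\fm_{\bbX,p_j}$ (each $\calO_{\bbX,p_j}$ is Artinian local). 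For each $j$ the locus in the $K$-vector space $(\fC^\sigma_\bbX)_{-r_\bbX}$ on which $a_j\in\fm_{\bbX,p_j}$ is a $K$-subspace, and it is proper because $g^*_j$ lies outside it. As $K$ is infinite, a finite union of proper subspaces cannot exhaust the space, so I can choose $g$ avoiding all of them; this $g$ satisfies ${\rm Ann}_R(g)=\langle 0\rangle$. This genericity step is where infiniteness of $K$ is used, and it is the main obstacle of the argument; the rest is bookkeeping.

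For part~(a), note that for $h\in(\delta^\sigma_\bbX)_i$ we have $hg\in\delta^\sigma_\bbX\cdot\fC^\sigma_\bbX={\rm tr}(\fC^\sigma_\bbX)\subseteq R$, a homogeneous element of degree $i-r_\bbX$, and the map $h\mapsto hg$ is injective because ${\rm Ann}_R(g)=\langle 0\rangle$. Hence $\HF_{\delta^\sigma_\bbX}(i)\le\HF_\bbX(i-r_\bbX)$ for all $i$. For the supplement, if equality holds for all $i$ then in particular $\HF_{\delta^\sigma_\bbX}(r_\bbX)=\HF_\bbX(0)=1\ne 0$, so $\bbX$ is arithmetically Gorenstein by Remark~\ref{remSec3.9}. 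Conversely, if $\bbX$ is arithmetically Gorenstein then $\fC^\sigma_\bbX\cong\omega_R(1)$ is free of rank one generated in degree $-r_\bbX$; writing $\fC^\sigma_\bbX=Rg$ with $g$ a unit of $Q^h(R)$ and using ${\rm tr}(\fC^\sigma_\bbX)=R$, multiplication by $g$ is an isomorphism $\delta^\sigma_\bbX\to\delta^\sigma_\bbX\, g=R$ of degree $-r_\bbX$, whence $\HF_{\delta^\sigma_\bbX}(i)=\HF_\bbX(i-r_\bbX)$ (alternatively one may cite \cite[Proposition~5.8]{KL}).

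For part~(b), the implication that an almost Gorenstein scheme satisfies $\HF_{\delta^\sigma_\bbX}(r_\bbX+1)=\HF_\bbX(1)$ is exactly Proposition~\ref{propSec5.06}, so only the converse needs work. Assume $\HF_{\delta^\sigma_\bbX}(r_\bbX+1)=\HF_\bbX(1)$ and take $g$ as above, giving the injection $\theta\colon R\to\fC^\sigma_\bbX(-r_\bbX)$, $1\mapsto g$, with image $Rg$. Multiplication by $g$ embeds $(\delta^\sigma_\bbX)_{r_\bbX+1}$ into $({\rm tr}(\fC^\sigma_\bbX))_1\subseteq R_1=\fm_1$; since this map is injective and $\dim_K(\delta^\sigma_\bbX)_{r_\bbX+1}=\HF_\bbX(1)=\dim_K\fm_1$, I would conclude $(\delta^\sigma_\bbX)_{r_\bbX+1}\,g=\fm_1$.

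It then remains to show $\fm\cdot\fC^\sigma_\bbX(-r_\bbX)\subseteq Rg$. Given $\ell\in\fm_1$, write $\ell=hg$ with $h\in(\delta^\sigma_\bbX)_{r_\bbX+1}$; for any $g'\in\fC^\sigma_\bbX(-r_\bbX)$ commutativity of $Q^h(R)$ gives $\ell g'=(hg')g$, and $hg'\in\delta^\sigma_\bbX\cdot\fC^\sigma_\bbX\subseteq R$, so $\ell g'\in Rg$. Because $\fm$ is generated in degree one, this yields $\fm\cdot\fC^\sigma_\bbX(-r_\bbX)\subseteq Rg$, i.e. $\fm\cdot C=\langle 0\rangle$ for $C=\operatorname{coker}(\theta)$, and the exact sequence $0\to R\xrightarrow{\theta}\fC^\sigma_\bbX(-r_\bbX)\to C\to 0$ witnesses that $\bbX$ is almost Gorenstein. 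I expect this to extend the method of Proposition~\ref{propSec5.5} to arbitrary $\Delta_\bbX$: the special role of $\Delta_\bbX=1$ there (which made $(\fC^\sigma_\bbX)_{-r_\bbX}$ one-dimensional) is now played by the generic choice of $g$, and the arithmetically Gorenstein case is subsumed, since there $C$ may simply be zero.
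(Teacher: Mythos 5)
Your proof is correct and is essentially the paper's own argument: both rest on producing $g\in(\fC_\bbX^\sigma)_{-r_\bbX}$ with ${\rm Ann}_R(g)=\langle 0\rangle$ (the paper quotes \cite[Remark~4.13]{KL} for this; your subspace-avoidance construction over the infinite field $K$ is precisely its content), then use multiplication by $g$ to obtain the inequality in (a) and the identity $(\delta_\bbX^\sigma)_{r_\bbX+1}\cdot g=\fm_1$, and conclude (b) by the cokernel argument of Proposition~\ref{propSec5.5}, with the forward implications delegated to Remark~\ref{remSec3.9} and Proposition~\ref{propSec5.06} exactly as the paper does. Your only deviations are cosmetic streamlinings: you run the injection $(\delta_\bbX^\sigma)_i\hookrightarrow R_{i-r_\bbX}$ for every $i$ at once rather than invoking Proposition~\ref{propSec3.8} outside the range $r_\bbX\le i<2r_\bbX$, and in (b) you skip the paper's reduction to the non-arithmetically-Gorenstein case, which your uniform argument indeed renders unnecessary.
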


\begin{proof}
Since $\bbX$ is a Cayley-Bacharach scheme,
we have $\HF_{\delta_\bbX^\sigma}(i) = \HF_\bbX(i-r_\bbX)$
for $i < r_{\bbX}$ or $i \ge 2r_{\bbX}$
by Proposition~\ref{propSec3.8}.
Hence it suffices to consider the case
$r_{\bbX} \le i < 2r_{\bbX}$.
Note that $K$ is infinite. By \cite[Remark~4.13]{KL},
there exists a homogeneous element
$g\in(\fC_\bbX^\sigma)_{-r_\bbX}$ such that
${\rm Ann}_R(g)=\langle 0\rangle$.
Then we have $g\cdot (\delta_\bbX^\sigma)_i
\subseteq R_{i-r_{\bbX}}$. This implies
$\HF_{\delta_\bbX^\sigma}(i) \le \HF_\bbX(i-r_\bbX)$
for $r_{\bbX} \le i < 2r_{\bbX}$.
Moreover, the additional claim of (a)
follows from Remark~\ref{remSec3.9}.

To prove (b), according to Proposition~\ref{propSec5.06}
and (a) we only need to prove that $\bbX$ is almost Gorenstein
if $\HF_{\delta_\bbX^\sigma}(r_\bbX)=0$ and
$\HF_{\delta_\bbX^\sigma}(r_\bbX+1)=\HF_\bbX(1)$.
In this case we have
$(\delta_\bbX^\sigma)_{r_\bbX+1}\cdot g = \mathfrak{m}_1$,
where $g\in (\fC_\bbX^\sigma)_{-r_\bbX}$ is given as above.
A similar argument as in the proof of
Proposition~\ref{propSec5.5} implies that
$\bbX$ is an almost Gorenstein scheme.
\end{proof}

Recall that a 0-dimensional scheme $\bbX\subseteq\bbP^n_K$
is called {\bf level} if the socle of the Artinian local ring
$\overline{R}=R/\langle x_0\rangle$
equals $\overline{R}_{r_\bbX}$.
According to \cite[Satz~11.6]{Kr2}, the scheme $\bbX$
is level if and only if the canonical module $\omega_R$
is generated by homogeneous elements of degree $-r_\bbX+1$.
It is also known that $\bbX$ is a Cayley-Bacharach scheme
if it is level (see \cite[Proposition~6.1]{GK}).
Furthermore, Example~\ref{examS5.07} also shows that
an almost Gorenstein scheme may not be a level scheme.

\begin{prop}
Let $K$ be an infinite field, and let
$\bbX$ be a 0-dimensional locally Gorenstein
scheme in~$\bbP^n_K$.
\begin{enumerate}
  \item If $r_\bbX =1$ then $\bbX$
  is an almost Gorenstein level scheme.

  \item If $r_\bbX =1$ and $\deg(\bbX)>2$
  then $\bbX$ has minimal Dedekind different.

  \item If $\bbX$ is level and
  $\min\{\Delta_\bbX,r_\bbX\}\ge 2$, then
  $\bbX$ is not an almost Gorenstein scheme.
\end{enumerate}
\end{prop}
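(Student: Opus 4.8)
The plan is to handle the three parts separately, reducing (a) and (b) to facts already proved for Cayley--Bacharach schemes and reserving the substantive argument for (c). For part~(a), I would first note that $r_\bbX=1$ forces $\deg(\bbX)\ge 2$, since $\HF_\bbX(r_\bbX-1)=\HF_\bbX(0)=1<\deg(\bbX)$; in particular $\bbX$ has CBP$(0)=$CBP$(r_\bbX-1)$, so it is a Cayley--Bacharach scheme. Levelness I would check directly on $\overline R=R/\langle x_0\rangle$, whose Hilbert function $\Delta\HF_\bbX$ reads $1,\ \deg(\bbX)-1,\ 0,\ 0,\dots$; thus $\overline R_{r_\bbX}=\overline R_1$ is the top nonzero component and lies in $\mathrm{soc}(\overline R)$, while $\overline R_0=K$ does not (as $\overline R_1\ne\langle 0\rangle$), giving $\mathrm{soc}(\overline R)=\overline R_{r_\bbX}$. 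For the almost Gorenstein property I would invoke Proposition~\ref{propSec5.8}(b): since $r_\bbX+1=2=2r_\bbX$, Proposition~\ref{propSec2.5} gives $\HF_{\delta_\bbX^\sigma}(r_\bbX+1)=\deg(\bbX)$, which equals $\HF_\bbX(1)=\deg(\bbX)$, so $\bbX$ is almost Gorenstein.

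For part~(b), minimal Dedekind different means $\delta_\bbX^\sigma=\bigoplus_{i\ge 2}R_i$, i.e. $\HF_{\delta_\bbX^\sigma}(0)=\HF_{\delta_\bbX^\sigma}(1)=0$. Proposition~\ref{propSec3.8} already supplies the vanishing in all degrees $<r_\bbX=1$, so only $\HF_{\delta_\bbX^\sigma}(r_\bbX)=\HF_{\delta_\bbX^\sigma}(1)=0$ remains. By Remark~\ref{remSec3.9} this value is nonzero exactly when $\bbX$ is arithmetically Gorenstein, so it suffices to exclude that case. If $\bbX$ were arithmetically Gorenstein, then $\overline R$ would be Artinian Gorenstein, forcing $\dim_K\mathrm{soc}(\overline R)=1$; but by part~(a) $\mathrm{soc}(\overline R)=\overline R_{r_\bbX}$ has dimension $\Delta_\bbX=\deg(\bbX)-1$, contradicting $\deg(\bbX)>2$. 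Hence $\HF_{\delta_\bbX^\sigma}(1)=0$ and $\bbX$ has minimal Dedekind different.

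Part~(c) is where I expect the real work. Since $\bbX$ is level it is a Cayley--Bacharach scheme by~\cite[Proposition~6.1]{GK}, and by the level criterion~\cite[Satz~11.6]{Kr2} the module $\fC_\bbX^\sigma=\omega_R(1)$ is generated in degree $-r_\bbX$, so $\fC_\bbX^\sigma(-r_\bbX)$ is generated in degree $0$. Arguing by contradiction, I would assume $\bbX$ is almost Gorenstein, with exact sequence $0\to R\xrightarrow{\theta}\fC_\bbX^\sigma(-r_\bbX)\to C\to 0$ and $\mathfrak{m}\cdot C=\langle 0\rangle$, where necessarily $\theta(1)=g\in(\fC_\bbX^\sigma)_{-r_\bbX}$ and $\theta(f)=fg$. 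As a quotient of a module generated in degree $0$, $C$ is generated in degree $0$; together with $\mathfrak{m}\cdot C=\langle 0\rangle$ this forces $C=C_0$ to be concentrated in degree $0$. Reading the exact sequence in each degree $i\ge 1$ then gives $\HF_{\fC_\bbX^\sigma(-r_\bbX)}(i)=\HF_\bbX(i)$, and substituting the formula $\HF_{\fC_\bbX^\sigma}(j)=\deg(\bbX)-\HF_\bbX(-j-1)$ this becomes $\deg(\bbX)-\HF_\bbX(r_\bbX-i-1)=\HF_\bbX(i)$.

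Finally I would specialize this identity to $i=r_\bbX-1$, which is admissible because $r_\bbX\ge 2$ guarantees $i\ge 1$. The left-hand side collapses to $\deg(\bbX)-\HF_\bbX(0)=\deg(\bbX)-1$, while the right-hand side is $\HF_\bbX(r_\bbX-1)=\deg(\bbX)-\Delta_\bbX$; comparing the two yields $\Delta_\bbX=1$, in contradiction with the hypothesis $\Delta_\bbX\ge 2$. Hence no almost Gorenstein structure can exist, which proves~(c) and is consistent with the direction $r_\bbX=1$ of the announced equivalence recovered in~(a). The main obstacle is the degree bookkeeping in~(c): levelness must be used precisely to pin $C$ down to a single degree, after which the resulting Hilbert-function symmetry can only be violated at the single index $i=r_\bbX-1$, where it collides with $\Delta_\bbX\ge 2$.
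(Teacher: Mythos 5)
Your proof is correct in all three parts, and part~(c) --- the substantive one --- is essentially the paper's own argument: both use the level hypothesis (via \cite[Satz~11.6]{Kr2}) to force the cokernel data of the almost Gorenstein sequence into degree~$0$, conclude $\HF_{\fC_\bbX^\sigma(-r_\bbX)}(i)=\HF_\bbX(i)$ for $i\ge 1$, and then hit the contradiction $\deg(\bbX)-1=\deg(\bbX)-\Delta_\bbX$ at $i=r_\bbX-1$; your formulation via ``$C$ is generated in degree $0$ and killed by $\fm$, hence concentrated in degree $0$'' is just a cleaner packaging of the paper's equalities $(\fC_\bbX^\sigma(-r_\bbX))_i=\fm_1\cdot(\fC_\bbX^\sigma(-r_\bbX))_{i-1}=\fm_i\cdot\theta(1)$. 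In parts (a) and (b) you diverge from the paper in the supporting lemmas, to your advantage in self-containedness: the paper proves levelness by invoking \cite[Remark~4.13]{KL} (where the infinite-field hypothesis enters) to produce $g\in(\fC_\bbX^\sigma(-1))_0$ with trivial annihilator and then counting dimensions, and it gets both the almost Gorenstein property in (a) and the minimal different in (b) from the external result \cite[Proposition~5.8]{KL} giving $\delta_\bbX^\sigma=\bigoplus_{i\ge 2}R_i$; you instead read off levelness directly from the socle of $\overline{R}$ (whose Hilbert function is $1,\deg(\bbX)-1,0,\dots$), get almost Gorensteinness from Proposition~\ref{propSec2.5} ($r_\bbX+1=2r_\bbX$) combined with Proposition~\ref{propSec5.8}(b), and get (b) from Proposition~\ref{propSec3.8} plus Remark~\ref{remSec3.9} after excluding the arithmetically Gorenstein case by the same socle count. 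Your route for (a)--(b) is more elementary and stays inside the paper's own results (indeed your levelness argument does not even need $K$ infinite, though Proposition~\ref{propSec5.8} still does), while the paper's route is shorter at the price of leaning on \cite{KL}.
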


\begin{proof}
(a)\ Suppose that $r_\bbX=1$ and $\bbX$
is not arithmetically Gorenstein. It is clear that
$\bbX$ is a Cayley-Bacharach scheme.
Since $K$ is infinite, \cite[Remark~4.13]{KL}
yields an element $g\in (\fC_\bbX^\sigma(-1))_{0}$
such that ${\rm Ann}_R(g)=\langle 0\rangle$.
We see that $\dim_K (\mathfrak{m}_1\cdot g)
= \deg(\bbX)= \dim_K(\fC_\bbX^\sigma(-1))_{1}$.
This implies $\mathfrak{m}_1\cdot g
= (\fC_\bbX^\sigma(-1))_{1}$.
Hence $\bbX$ is a level scheme.
Furthermore, we have
$\delta_\bbX^\sigma = \bigoplus_{i\ge 2} R_i$
by \cite[Proposition~5.8]{KL}, and thus
$\HF_{\delta_\bbX^\sigma}(r_\bbX+1)=\HF_\bbX(1)$.
Consequently, Proposition~\ref{propSec5.8} shows
that $\bbX$ is an almost Gorenstein scheme.

(b)\ Since $\deg(\bbX)>2$, we have
$\Delta_\bbX = \deg(\bbX)-1 \ge 2$.
So, $\bbX$ is not an arithmetically Gorenstein scheme.
As above, the Dedekind different satisfies
$\delta_\bbX^\sigma = \bigoplus_{i\ge 2} R_i$.
Hence $\bbX$ has minimal Dedekind different.

(c)\ Let us write
\[
\begin{array}{ll}
\HF_\bbX &: 1\ h_1\ h_2\ \cdots\
h_{r_\bbX-1}\ \deg(\bbX) \ \deg(\bbX) \cdots \\
\HF_{\fC_\bbX^\sigma(-r_\bbX)} &:
c_0\ c_1\ c_2\ \cdots\ c_{r_\bbX-1}\ \deg(\bbX)
\ \deg(\bbX) \cdots
\end{array}
\]
where $c_i = \deg(\bbX)- h_{r_\bbX-i-1}$ for
$i=1,\dots,r_\bbX-1$ and $c_0=\deg(\bbX)-1$.
Suppose that $\bbX$ is an almost Gorenstein level
scheme with $\min\{\Delta_\bbX,r_\bbX\}\ge 2$.
We choose an exact sequence
$$
0 \longrightarrow R
\stackrel{\theta}{\longrightarrow}
\fC_\bbX^\sigma(-r_\bbX) \longrightarrow
C \longrightarrow 0
$$
of graded $R$-modules so that
$\mathfrak{m}\cdot C = \langle 0\rangle$.
Set $g =\theta(1)$. For $i\ge 1$, we have
$$
\mathfrak{m}_1\cdot (\fC_\bbX^\sigma(-r_\bbX))_{i-1}
= \mathfrak{m}_i\cdot g.
$$
Since $\bbX$ is level, we have
$\fC_\bbX^\sigma =\langle (\fC_\bbX^\sigma)_{-r_\bbX}\rangle_R$
by~\cite[Satz~11.6]{Kr2}.
This implies
$$
(\fC_\bbX^\sigma(-r_\bbX))_i =
\mathfrak{m}_1\cdot (\fC_\bbX^\sigma(-r_\bbX))_{i-1}
= \mathfrak{m}_i\cdot g.
$$
Therefore the Hilbert function of $\fC_\bbX^\sigma(-r_\bbX)$
has the form
\[
\HF_{\fC_\bbX^\sigma(-r_\bbX)}:\
c_0\ h_1\ h_2\ \cdots\ h_{r_\bbX-1}\ \deg(\bbX)
\ \deg(\bbX) \cdots.
\]
It follows that
$\deg(\bbX)- 1 = c_{r_\bbX-1} =
h_{r_\bbX-1} = \deg(\bbX)- \Delta_\bbX$.
Because $\Delta_\bbX >1$, we have
$\deg(\bbX)- 1 \ne \deg(\bbX)- \Delta_\bbX$,
a contradiction.
\end{proof}

Our next corollary is an immediate consequence
of this proposition. This result also follows
from \cite[Lemma~10.2 and Theorem~10.4]{GTT}.

\begin{cor}
Let $K$ be an infinite field, and let $\bbX$ be
a 0-dimensional locally Gorenstein
scheme in~$\bbP^n_K$ such that $\Delta_\bbX\ge 2$.
Then $\bbX$ is an almost Gorenstein level scheme
if and only if $r_\bbX=1$.
\end{cor}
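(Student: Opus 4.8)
The plan is to deduce this corollary directly from the preceding proposition, exploiting the fact that the standing hypothesis $\Delta_\bbX \ge 2$ collapses the case analysis of that proposition into a clean dichotomy on the regularity index. First I would record the bookkeeping observation that $\Delta_\bbX \ge 2$ forces $r_\bbX \ge 1$: since $\HF_\bbX(0)=1$, having $r_\bbX=0$ would mean $\deg(\bbX)=\HF_\bbX(0)=1$, whence $\Delta_\bbX = \HF_\bbX(0)-\HF_\bbX(-1)=1$, contradicting $\Delta_\bbX \ge 2$. Consequently the only alternative to $r_\bbX=1$ is $r_\bbX \ge 2$, and proving the stated equivalence amounts to handling these two regimes.

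For the implication ``$r_\bbX=1 \Rightarrow \bbX$ is an almost Gorenstein level scheme'', I would simply invoke part~(a) of the preceding proposition, which asserts exactly this under the hypothesis that $K$ is infinite—precisely our standing assumption. No further argument is required in this direction.

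For the converse I would argue by contradiction. Assume $\bbX$ is an almost Gorenstein level scheme but $r_\bbX \ne 1$; by the observation above this forces $r_\bbX \ge 2$, and together with $\Delta_\bbX \ge 2$ we obtain $\min\{\Delta_\bbX, r_\bbX\} \ge 2$. Since $\bbX$ is level, part~(c) of the preceding proposition then shows that $\bbX$ is \emph{not} an almost Gorenstein scheme, contradicting our assumption. Hence $r_\bbX=1$, and the equivalence follows.

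The main obstacle here is essentially nil at the level of this corollary: all the substantive content—in particular the Hilbert-function computation in part~(c), which shows that a level scheme with $\min\{\Delta_\bbX, r_\bbX\}\ge 2$ cannot be fitted into the defining almost Gorenstein exact sequence—has already been discharged in the preceding proposition. The only point demanding care is the reduction step ensuring $r_\bbX \ge 1$, so that ``$r_\bbX \ne 1$'' genuinely means ``$r_\bbX \ge 2$'' and part~(c) applies verbatim.
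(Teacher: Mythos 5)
Your proposal is correct and is exactly the paper's argument: the paper dismisses this corollary as an ``immediate consequence'' of the preceding proposition, and the details it leaves implicit are precisely the ones you supply---part~(a) gives the direction $r_\bbX=1\Rightarrow$ almost Gorenstein level, while part~(c) rules out $r_\bbX\ge 2$ for a level almost Gorenstein scheme with $\Delta_\bbX\ge 2$. Your extra bookkeeping step (that $\Delta_\bbX\ge 2$ forces $\deg(\bbX)\ge 2$, hence $r_\bbX\ge 1$, so $r_\bbX\ne 1$ really means $r_\bbX\ge 2$) is a sound and welcome clarification of why the dichotomy is exhaustive.
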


Finally, we are interested in the question:
if $\bbX$ is an almost Gorenstein scheme
with $r_\bbX\ge 2$, then does $\Delta_\bbX=1$ hold?
When $\bbX$ is a set of $s$ distinct $K$-rational
points in uniform position, \cite[Theorem~4.7]{Hig}
provides an affirmative answer to this question
with the help of the Biinjective Map Lemma
(cf.~\cite{Kr1}).
Recall that a set of $s$ distinct $K$-rational points
$\bbX$ is called {\bf ${\bf (i,j)}$-uniform}
if every subscheme $\bbY\subseteq \bbX$ of degree
$\deg(\bbX)-i$ satisfies $\HF_\bbY(j)=\HF_\bbX(j)$.
Notice that $\bbX$ is a Cayley-Bacharach scheme
if and only if it is $(1,r_\bbX-1)$-uniform,
and if $\bbX$ is $(i,j)$-uniform then it is also
$(i-1,j)$-uniform and $(i,j-1)$-uniform.
For further information about the uniformity
of~$\bbX$ see~\cite{GK,Kr1}.
The following proposition shows that the above
question also has an affirmative answer when
$\bbX$ is $(2,r_\bbX-1)$-uniform.

\begin{prop}
Let $\bbX =\{p_1,\dots,p_s\}\subseteq \bbP^n_K$
be a $(2,r_\bbX-1)$-uniform set of $s$ distinct
$K$-rational points. Suppose that $\bbX$ is
an almost Gorenstein scheme and $r_\bbX\ge 2$.
Then we have $\Delta_\bbX=1$.
\end{prop}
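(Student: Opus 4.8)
The plan is to argue by contradiction: assuming $\Delta_\bbX\ge 2$, I will produce a nonzero linear form annihilated by $\overline{R}_{r_\bbX-1}$ and contradict the Biinjective Map Lemma. Since $\bbX$ is almost Gorenstein it is in particular a Cayley-Bacharach scheme, and the defining exact sequence $0\to R\stackrel{\theta}{\to}\fC^\sigma_\bbX(-r_\bbX)\to C\to 0$ with $\fm C=\langle 0\rangle$ supplies an element $g:=\theta(1)\in(\fC^\sigma_\bbX)_{-r_\bbX}$ with $\mathrm{Ann}_R(g)=\langle 0\rangle$. Because $\bbX$ is reduced with $K$-rational support, $Q^h(R)=\prod_{j=1}^s K[T_j,T_j^{-1}]$, so each graded piece $Q^h(R)_i$ is identified with $K^s$; under this identification I view $R_1\subseteq K^s$ as the space of evaluations of linear forms, and $g$ corresponds to a vector all of whose coordinates are nonzero (else a separator would annihilate $g$). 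Since $\HF_{\fC^\sigma_\bbX}(-r_\bbX)=\deg(\bbX)-\HF_\bbX(r_\bbX-1)=\Delta_\bbX$, it suffices to bound $\dim_K(\fC^\sigma_\bbX)_{-r_\bbX}$.

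The core step is a translation of the condition $\fm C=\langle 0\rangle$ into a stabiliser condition. As $g$ has full support, every element of $(\fC^\sigma_\bbX)_{-r_\bbX}$ can be written uniquely as $\psi g$ with $\psi\in K^s=Q^h(R)_0$. A direct computation in $Q^h(R)$ shows that $\fm C=\langle 0\rangle$ forces $\fm_1(\psi g)\subseteq R_1g$, which, using $\mathrm{Ann}_R(g)=\langle 0\rangle$, is equivalent to $\psi\cdot R_1\subseteq R_1$ inside $Q^h(R)_1=K^s$. Hence, writing $A:=\{\psi\in K^s\mid \psi\cdot R_1\subseteq R_1\}$, I obtain an injection $(\fC^\sigma_\bbX)_{-r_\bbX}\hookrightarrow A$, $\psi g\mapsto\psi$, so that $\Delta_\bbX\le\dim_K A$. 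Now suppose $\Delta_\bbX\ge 2$; then $\dim_K A\ge 2$, so $A$ contains a non-scalar $\psi$. Let $\ell_\psi\in R_1$ be the linear form whose evaluation vector is $\psi$ (it exists, since $\psi=\psi\cdot\mathbf 1\in R_1$); non-scalarity of $\psi$ means $\bar\ell_\psi\ne 0$ in $\overline{R}_1$, where $\overline{R}=R/\langle x_0\rangle$. The containment $\psi\cdot R_1\subseteq R_1$ then yields $\ell_\psi\ell\in x_0R_1$ in $R_2$ for every $\ell\in R_1$, i.e. $\bar\ell_\psi\,\overline{R}_1=\langle 0\rangle$; since $\overline{R}_1$ generates the maximal ideal of $\overline{R}$, this gives $\bar\ell_\psi\,\overline{R}_{r_\bbX-1}=\langle 0\rangle$.

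Finally I invoke the Biinjective Map Lemma. The hypothesis that $\bbX$ is $(2,r_\bbX-1)$-uniform implies that $\bbX$ is $(2,j)$-uniform for all $1\le j\le r_\bbX-1$, which is exactly the input needed for the multiplication pairing $\overline{R}_1\times\overline{R}_{r_\bbX-1}\to\overline{R}_{r_\bbX}$ to be biinjective (cf.~\cite{Kr1}); in particular $\bar\ell\,\overline{R}_{r_\bbX-1}\ne\langle 0\rangle$ for every $\bar\ell\in\overline{R}_1\setminus\{0\}$. Applying this to $\bar\ell_\psi$ contradicts the conclusion of the previous paragraph (here $r_\bbX\ge 2$ is used, so that $\overline{R}_{r_\bbX-1}\ne\langle 0\rangle$). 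Therefore $\Delta_\bbX\le 1$, and since $\Delta_\bbX\ge 1$ always holds we conclude $\Delta_\bbX=1$. I expect the main obstacle to be twofold: first, carefully carrying out the computation in $Q^h(R)$ that converts $\fm C=\langle 0\rangle$ into the clean stabiliser condition $\psi\cdot R_1\subseteq R_1$, keeping track of the shift between $Q^h(R)_1$ and $Q^h(R)_2$ effected by multiplication with $x_0$; and second, matching the precise uniformity hypotheses of the Biinjective Map Lemma in \cite{Kr1} to the $(2,r_\bbX-1)$-uniformity assumed here, which is what rules out the decomposable configurations that would otherwise permit a degree-one socle element in $\overline{R}$.
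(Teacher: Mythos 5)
Your first two paragraphs are correct, and they form a genuinely different reduction from the paper's: translating $\fm\cdot C=\langle 0\rangle$ into the stabiliser condition $\psi\cdot R_1\subseteq R_1$ (legitimate, since $g=\theta(1)$ has full support and is therefore a unit of $Q^h(R)$), obtaining the injection $(\fC^\sigma_\bbX)_{-r_\bbX}\hookrightarrow A$, and producing from $\Delta_\bbX\ge 2$ a nonzero class $\bar\ell_\psi\in\overline{R}_1$ with $\bar\ell_\psi\cdot\overline{R}_1=\langle 0\rangle$, hence $\bar\ell_\psi\cdot\overline{R}_{r_\bbX-1}=\langle 0\rangle$. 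All of this checks out.

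The proof breaks at exactly the step you flagged as an expected obstacle. The hypothesis of $(2,r_\bbX-1)$-uniformity does \emph{not} imply that $\bar\ell\cdot\overline{R}_{r_\bbX-1}\ne\langle 0\rangle$ for every nonzero $\bar\ell\in\overline{R}_1$; no lemma of \cite{Kr1} gives this, and the implication is false. Take $\bbX=S_1\cup S_2\subseteq\bbP^5_{\bbQ}$, where $S_1$ consists of $6$ general points in the plane $\mathcal{Z}(X_3,X_4,X_5)$ and $S_2$ of $6$ general points in the plane $\mathcal{Z}(X_1,X_2,X_5-X_0)$. These two planes are skew and lie in the parallel hyperplanes $\mathcal{Z}(X_5)$ and $\mathcal{Z}(X_5-X_0)$, and one checks that every pair of degree-$i$ functions on the two planes ($i\ge 1$) is realized by a single degree-$i$ form, so that $\HF_\bbX(i)=\HF_{S_1}(i)+\HF_{S_2}(i)$ for $i\ge 1$; thus $\HF_\bbX:1\ 6\ 12\ 12\cdots$ and $r_\bbX=2$. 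This set is $(2,1)$-uniform: a nonzero element of the image of $R_1$ in $Q^h(R)_1\cong K^{12}$ supported on at most two points would be an affine-linear function on one of the planes vanishing at four of its six general points, which is impossible. Nevertheless $\bar x_5\ne 0$ in $\overline{R}_1$ while $\bar x_5\cdot\overline{R}_1=\langle 0\rangle$: for $m=c_0x_0+\cdots+c_5x_5\in R_1$ one verifies on $S_1$ and $S_2$ separately that $x_5m=x_0\bigl((c_0+c_5)x_5+c_3x_3+c_4x_4\bigr)$ in $R$. So a $(2,r_\bbX-1)$-uniform set can perfectly well carry a degree-one socle element, and your $\bar\ell_\psi$ contradicts nothing. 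What uniformity actually yields -- and all the paper extracts from it, via \cite[Proposition~3.4]{Kr1} -- is the much weaker fact that two distinct separator classes $\overline{f}_{j_1},\overline{f}_{j_2}$ are linearly independent in $\overline{R}_{r_\bbX}$. The paper must then play this off against the specific element $g$ by a counting argument: choose $\ell_0\in R_1\setminus\{0\}$ vanishing at the maximal number of points of $\bbX$, build $g_{j_1}\in(\fC_\bbX)_{-r_\bbX}$ from separators via \cite[Corollary~1.10]{Kr3}, and show that $\ell_0g_{j_1}$ vanishes at strictly more points than any element $\ell g$ with $\ell\in R_1\setminus\{0\}$, whence $\ell_0g_{j_1}\notin\fm_1\cdot g$ and $\fm\cdot C\ne\langle 0\rangle$. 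Some argument of this kind, tied to the particular element $g$ rather than to the ring $\overline{R}$ alone, is needed to close your proof; uniformity by itself cannot finish it.
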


\begin{proof}
Suppose for a contradiction that $\Delta_\bbX>1$.
Since $\bbX$ is an almost Gorenstein scheme,
we choose an exact sequence
$$
0 \longrightarrow R
\stackrel{\theta}{\longrightarrow}
\fC_\bbX(-r_\bbX) \longrightarrow
C \longrightarrow 0
$$
of graded $R$-modules so that
$\mathfrak{m}\cdot C = \langle 0\rangle$,
and set $g =\theta(1)$.
We write $g = x_0^{-2r_\bbX} \widetilde{g}$
with $\widetilde{g}\in R_{r_\bbX}$.
Then $\widetilde{g}(p_j) \ne 0$ for all
$j=1,\dots,s$. For each $f\in R_i$ with $i\ge0$,
we define the value
$\eta(f):=\#\{\, j\,\mid\,1\le j\le s, f(p_j)=0\,\}$.
Clearly, we have $\eta(\ell \widetilde{g})
= \eta(\ell)$ for all $\ell \in R_1$.
Now we let $\ell_0 \in R_1$ be a non-zero
element such that $\eta(\ell_0) =
\max\{\, \eta(\ell) \,\mid\,
\ell \in R_1\setminus\{0\} \,\}$.
Since $r_\bbX\ge 2$ and $\bbX$ is a
Cayley-Bacharach scheme, there exist
at least two points $p_{j_1},p_{j_2}\in \bbX$
such that $\ell_0(p_{j_1})\ne 0$ and
$\ell_0(p_{j_2})\ne 0$.
Let $f_j\in R_{r_\bbX}$ be the separator
of $\bbX\setminus\{p_j\}$ in $\bbX$ with
$f_j(p_j)=1$ and $f_j(p_k)=0$ for $k\ne j$.
Since $\bbX$ is $(2,r_\bbX-1)$-uniform,
\cite[Proposition~3.4]{Kr1} yields that
$\{\overline{f}_{j_1}, \overline{f}_{j_2}\}$
is linearly independent in $\overline{R}_{r_\bbX}$.
Let $\Sigma = \{j_1,\dots,j_{\Delta_\bbX}\}$
be a subset of $\{1,\dots,s\}$ such that
$\overline{f}_{j_1}, \overline{f}_{j_2},
\cdots,\overline{f}_{j_{\Delta_\bbX}}$
form a $K$-basis of $\overline{R}_{r_\bbX}$.
By \cite[Corollary~1.10]{Kr3}, there exist
elements $g_{j_1},g_{j_2}\in (\fC_\bbX)_{-r_\bbX}$
of the form
$g_{j_l} = x_0^{-2r_\bbX}(f_{j_l}+
\sum_{k\notin\Sigma}\beta_{kj_l}f_k)$
for $l=1,2$, where $\beta_{kj_l} \in K$.
Letting $\widetilde{g}_{j_1} = f_{j_1}+
\sum_{k\notin\Sigma}\beta_{kj_1}f_k$,
we have $\ell_0\widetilde{g}_{j_1}\ne 0$
and $\eta(\ell_0\widetilde{g}_{j_1})
\ge \eta(\ell_0)+1$.
Thus we get
$\eta(\ell_0\widetilde{g}_{j_1})>\eta(\ell\widetilde{g})$
for all $\ell\in R_1\setminus\{0\}$.
Since $x_0$ is a non-zerodivisor of $R$,
this implies that
$0\ne \ell_0g_{j_1} \notin \mathfrak{m}_1\cdot g$.
In particular, we have
$\mathfrak{m}\cdot C \ne \langle 0\rangle$,
a contradiction.
\end{proof}

% -----------------------------------------------------------
\medskip\bigbreak
\subsection*{Acknowledgments.}
This paper is partially based on
the third author's dissertation~\cite{Lon15}.
The authors thank Ernst Kunz for his encouragement
to elaborate some results presented here.
The second author would also like to acknowledge
her financial support from OeAD.
%Last, but not least, we are extremely thankful to the referees
%for their very detailed and enlightening comments.

% -----------------------------------------------------------
\medskip\bigbreak

\end{document}